\newif\ifrs
\ifrs \usepackage{mathrsfs} \fi  % Use \mathscr{*}
\newif\ifcol
\newtheorem{theorem*}{Theorem}%[section]
\newtheorem{note*}[theorem*]{Note}
\newtheorem{lemma*}[theorem*]{Lemma}
\newtheorem{definition*}[theorem*]{Definition}
\newtheorem{proposition*}[theorem*]{Proposition}
\newtheorem{corollary*}[theorem*]{Corollary}
\newtheorem{remark*}[theorem*]{Remark}
\newtheorem{example*}[theorem*]{Example}
\newif\ifcol
\newcommand{\colorr}{\color[rgb]{0.8,0,0}}
\newcommand{\colorn}{\color[rgb]{1,1,1}}
\newcommand{\colorr}{\color{black}}% {{\color[rgb]{0.8,0,0}}
\newcommand{\colorn}{\color{black}}% {\color[rgb]{1,1,1}}
\def\bd{\begin{description}}
\def\ed{\end{description}}
\def\D2{\bbD_{2,\infty-}}
\def\D{{\bf D}}
\def\F{{\bf F}}
\def\calb{{\cal B}}
\def\calf{{\cal F}}
\def\calg{{\cal G}}
\def\calh{{\cal H}}
\def\calm{{\cal M}}
\def\caln{{\cal N}}
\def\half{\frac{1}{2}}
\def\be{\begin{equation}}
\def\ee{\end{equation}}
\def\bea{\begin{eqnarray}}
\def\eea{\end{eqnarray}}
\def\beas{\begin{eqnarray*}}
\def\eeas{\end{eqnarray*}}
\def\bi{\begin{itemize}}
\def\ei{\end{itemize}}
\def\bd{\begin{description}}
\def\ed{\end{description}}
\def\l{\left}
\def\r{\right}
\newcommand{\bbD}{{\mathbb D}}
\newcommand{\reels}{\mathbb{R}}
\newcommand{\naturels}{\mathbb{N}}
\newcommand{\esp}{\mathbb{E}}
\newcommand{\proba}{\mathbb{P}}
\newcommand{\inv}[1]{\frac{1}{#1}}
\newcommand{\ind}[1]{\mathbf{1}_{\l\{#1\r\}}}
\begin{document}
 \title{Estimation for high-frequency data under parametric market microstructure noise}
\author{Simon Clinet\footnote{Faculty of Economics, Keio University. 2-15-45 Mita, Minato-ku, Tokyo, 108-8345, Japan. Phone:  +81-3-5427-1506. E-mail: clinet@keio.jp website: http://user.keio.ac.jp/\char`\~clinet/}   and Yoann Potiron\footnote{Faculty of Business and Commerce, Keio University. 2-15-45 Mita, Minato-ku, Tokyo, 108-8345, Japan. Phone:  +81-3-5418-6571. E-mail: potiron@fbc.keio.ac.jp website: http://www.fbc.keio.ac.jp/\char`\~ potiron}}
\date{This version: \today}

\maketitle

\begin{abstract}
We develop a general class of noise-robust estimators based on the existing estimators in the non-noisy high-frequency data literature. The microstructure noise is a parametric function of the limit order book. The noise-robust estimators are constructed as plug-in versions of their counterparts, where we replace the efficient price, which is non-observable, by an estimator based on the raw price and limit order book data. We show that the technology can be applied to five leading examples where, depending on the problem, price possibly includes infinite jump activity and sampling times encompass asynchronicity and endogeneity. 
\end{abstract}
\textbf{Keywords}: functionals of volatility ; high-frequency covariance ; high-frequency data ; limit order book ; parametric market microstructure noise
\\
% \\ \textbf{JEL codes}: C01; C13; C14
\section{Introduction}

It is now widely acknowledged that the availability of high-frequency data has led to a more accurate description of financial markets. Over the past decades, empirical studies have unveiled several aspects of the frictionless efficient price. Accordingly, the assumptions on the latter have been gradually weakened to the extent that it is common nowadays to represent it as a general It\^{o} semi-martingale including jumps with infinite activity. Moreover, the sampling times are also often considered as asynchronous, random, and even sometimes endogenous, i.e. possibly correlated with the efficient price. The accessibility of high-frequency data has also shed light on the frictions, or so-called market microstructure noise (MMN), which get prominent as the sampling frequency increases. As a matter of fact, realized volatility (i.e. summing the square returns), which is efficient in the absence of frictions becomes badly biased when the frequency increases. This was visible on the signature plot in \cite{andersen2001bdistribution}. A typical challenge that faces a theoretical statistician today is to incorporate jumps, asynchronicity, endogeneity and frictions into the model. 

\smallskip
A frequently used set-up is 
\begin{eqnarray}
\label{frequentsetup}
\underbrace{Z_{t_i}}_{\text{observed price}}  =  \underbrace{X_{t_i}}_{\text{efficient price}} +  \underbrace{\epsilon_{t_i}}_{\text{MMN}},
\end{eqnarray}
    where $\epsilon_{t_i}$ is i.i.d. and latent. In two nice and independent papers, \cite{li2016efficient} and \cite{chaker2017high}, and subsequently \cite{clinet2019testing} and \cite{clinet2019disentangling}, consider the following parametric form for the noise to estimate volatility:
\begin{eqnarray}  
\label{genmodel}
    \underbrace{Z_{t_i}}_{\text{observed price}}  =  \underbrace{X_{t_i}}_{\text{efficient price}} + \underbrace{\underbrace{\phi(Q_{t_i}, \theta_0)}_{\text{parametric noise}} + \underbrace{\epsilon_{t_i}}_{\text{residual noise}}}_{\text{MMN}},
\end{eqnarray}
where $Q_{t_i}$ is the information from the limit order book and $\phi$ is a function known to the statistician. A simple and familiar example was introduced in \cite{roll1984simple}, and specified in e.g. \cite{hasbrouck2002stalking}, where 
\begin{eqnarray}
\label{roll}
\phi(Q_{t_i}, \theta_0) = I_{t_i} \theta_0,
\end{eqnarray}
with $I_{t_i}$ corresponding to the trade direction, i.e. 1 if the transaction at time $t_i$ is buyer initiated and -1 if seller initiated, and $\theta_0$ standing for half of the effective spread. In \cite{glosten1988estimating}, the extension includes the trading volume $V_{t_i}$ and takes on the form 
\begin{eqnarray}
\label{glostenharris}
\phi(Q_{t_i}, \theta_0) = I_{t_i} \theta_0^{(1)} + I_{t_i} V_{t_i} \theta_0^{(2)}. 
\end{eqnarray}
A different model features information about the quoted spread $S_{t_i}$, where 
\begin{eqnarray}
\label{spreadmodel}
\phi(Q_{t_i}, \theta_0) = I_{t_i} S_{t_i} \theta_0.
\end{eqnarray}
This model can be seen as an updated time-varying Roll model, as the quoted spread is nowadays available in the structure of current limit order book markets, whereas it was not observed at the time when Roll model was proposed.

\smallskip
There are two regimes related to the parametric model (\ref{genmodel}), i.e. the null residual noise and non zero residual noise. To estimate volatility, the cited papers rely on a plug-in procedure. In a first step, they provide estimators of the parameter $\theta_0$ and establish fast convergence rate which satisfies
\begin{eqnarray}
\label{ratethetahat}
N (\widehat{\theta} - \theta_0) = O_\proba (1),
\end{eqnarray}
where $N$ stands for the number of observations and pre estimate the efficient price via
\begin{eqnarray}
\widehat{X}_{t_i}  =  Z_{t_i} - \phi(Q_{t_i}, \widehat{\theta}).
\label{priceestimator}
\end{eqnarray} 
In a second step, one can apply a "usual" estimator of volatility, considering the observed price as in fact the pre estimated efficient price. More specifically, in case of absence of residual noise, the cited papers implement realized volatility and retrieve efficiency of the method. In the presence of residual noise, they also provide residual noise robust estimators.

\smallskip
In this paper, we will assume the null residual noise regime, which we agree is quite a strong assumption (at first glance). Indeed, from a theoretical statistician standpoint, the non zero residual noise regime, of which the common set-up (\ref{frequentsetup}) is a particular case, is obviously more challenging.  Nonetheless, the original papers \cite{li2016efficient} and \cite{chaker2017high} most likely wanted to select empirically variables from the limit order book that fully explain the MMN. Actually, in their empirical study on four stocks and one day, \cite{li2016efficient} find that the residual noise of models such as (\ref{roll}) and (\ref{glostenharris}) accounts for 20-30\% of the total MMN variance, which is quite low and yet not negligible. \cite{chaker2017high} proposes and implements on a full year of one stock from the New York Stock Exchange tests for the absence of residual noise. She finds rejection rate around 15-25\% for (\ref{roll}), and 10-30\% in the case of (\ref{glostenharris}), here again quite nice results but not indicating the absence of residual noise. More recently, implemented on a month with 31 constituents from the CAC 40, \cite{clinet2019testing} find that the "best" model among several competitors from the financial economics literature is (\ref{spreadmodel}), with related residual noise accounting for as low as 1\% of the MMN variance, and results in line with previous findings for the other models. Finally, in an extensive study on 50 stocks randomly selected from the S\&P 500 during the period 2009-2017, \cite{clinet2019disentangling} exhibit (\ref{spreadmodel}) as the model explaining the most variance of the MMN, with residual noise accounting for (almost) 0\% of the total MMN variance. Those two empirical studies back up the null residual noise regime.

\smallskip
When implementing a non noise-robust procedure with high frequency data, it is often the case that the applied statistician faces a dilemma in using tick-by-tick data on the statistical principle that one should not throw away data, or subsampling -say every five minutes- in respect to the limited theoretical assumptions. We argue that the plug-in approach is a cheap method that kills two birds with one stone. On the one hand, it provides the theoretical statistician with a simple and transparent method for adding MMN in his theory. On the other hand, this will be useful for the applied statistician as he/she will be able to use tick-by-tick data when implementing the related estimator. This strategy is actually successfully used in \cite{andersen2019time}. In particular, our paper enlightens the theoretical aspect of the plug-in approach.

\smallskip
To do so, we describe the general framework as follows. If we define the horizon time as $T$, one typically seeks to estimate the random integrated parameter
\begin{eqnarray}
\label{object0}
\Xi = \int_0^T \xi_t dt,
\end{eqnarray}
where the spot parameter $\xi_t$ is a stochastic process which can correspond to the volatility, the high-frequency covariance, functionals of volatility and volatility of volatility, employing a given data-based estimator $\widetilde{\Xi}(X_{t_0}, \cdots, X_{t_N})$. In the absence of noise, $\widetilde{\Xi}$ usually enjoys a stable central limit theorem of the form
\begin{eqnarray}
\label{convergence}
N^{\kappa} \big(\widetilde{\Xi} - \Xi \big) & \rightarrow & \mathcal{MN}\big(AB,AVAR \big),
\end{eqnarray}
where $\kappa > 0$ corresponds to the rate of convergence, and $\calm\caln(AB,AVAR)$ designates a mixed normal distribution of random bias $AB$ and random variance $AVAR$ (due to the fact that the parameter itself is random). In addition, for the purpose of practical implementation, one typically provides a related studentized central limit theorem, i.e. data-based statistics $\widetilde{AB}(X_{t_0}, \cdots, X_{t_N})$ and $\widetilde{AVAR}(X_{t_0}, \cdots, X_{t_N})$ such that
\begin{eqnarray}
\label{studentizedconvergence}
N^{\kappa} \frac{\widetilde{\Xi} - N^{- \kappa} \widetilde{AB} - \Xi}{\sqrt{\widetilde{AVAR}}} & \rightarrow & \mathcal{N}(0,1).
\end{eqnarray}

\smallskip
Accordingly, when observations are contaminated by the parametric noise, we propose to exploit the corresponding class of plug-in estimators to estimate the integrated parameter. They are constructed as $\widehat{\Xi} =\widetilde{\Xi}(\widehat{X}_{t_0}, \cdots, \widehat{X}_{t_N})$, $\widehat{AB} = \widetilde{AB}(\widehat{X}_{t_0}, \cdots, \widehat{X}_{t_N})$ and $\widehat{AVAR} = \widetilde{AVAR}(\widehat{X}_{t_0}, \cdots, \widehat{X}_{t_N})$.
This plug-in approach seems to be traced back to the framework of the model with uncertainty zones from \cite{robert2010new} and \cite{robert2012volatility}.

\smallskip
The main contribution of this paper is presented in Section 4, where we state that under parametric noise the central limit theorems (\ref{convergence}) and (\ref{studentizedconvergence}) still hold when we substitute the estimators by their related plug-in version in five leading examples of the literature. Depending on the problem at hand, price possibly features jumps with infinite activity and sampling times include asynchronicity and endogeneity. The first example considers the threshold realized volatility inspired by \cite{andersen2001distribution}, \cite{barndorff2002estimating} and \cite{mancini2009non}. Technically, we extend the central limit theory of realized volatility under endogenous sampling in \cite{li2014realized}, which includes no jumps to allow for jumps with infinite activity. The second example deals with the threshold bipower variation, which was originally with no threshold in \cite{barndorff2004power}, and from \cite{corsi2010threshold} and \cite{vetter2010limit}. In the third example, we discuss the \cite{hayashi2005covariance} estimator to estimate high-frequency covariance. The fourth example is devoted to the local estimator from \cite{jacod2013quarticity} which estimates functionals of volatility. Finally, we focus on the estimator of volatility of volatility introduced in \cite{vetter2015estimation} in the last example. 

\smallskip
In all those examples, the only required assumption on $\widehat{\theta}$ to obtain (\ref{convergence}) and (\ref{studentizedconvergence}) is the fast convergence (\ref{ratethetahat}), which is already obtained in a general setting where price process features big jumps in \cite{li2016efficient}, so that our contribution in that respect boils down to adding possible small jumps. Moreover, the asymptotic properties in both equations remain unchanged, whereas the rate of convergence is slower in the i.i.d latent noise case. It means that the parametric noise assumption induces faster rates of convergence than the i.i.d condition, but it is fair to say that we play a different game in this paper as plug-in estimators exploit supplementary data available from the limit order book.

\smallskip
The rest of this paper is structured as follows. Section 2 introduces the model. Section 3 is devoted to the estimation. The five examples are developed in Section 4. We conclude in Section 5. Proofs can be found in Section 6.

\section{Model}
Almost all the quantities defined in what follows are multi-dimensional. Accordingly, the notation $x^{(k)}$ refers to the $k$-th component of $x$. We define the horizon time as $T >0$, and the (possibly random) number of observations\footnote{All the defined quantities are implicitly or explicitly indexed by $n$ (except for the integrated parameter which does not depend on $n$). For example $N$ should be thought and considered as $N_n$. Consistency and convergence in law refer to the behavior as $n \rightarrow \infty$. A full specification of the model also involves the stochastic basis $\calb=(\Omega,\proba,\calf,\F)$, where $\calf$ is a $\sigma$-field and $\F=(\calf_t)_{t\in[0,T]}$ is a filtration, which will be example-specific. We assume that all the processes (including the integrated parameter $\xi_t$) are $\F$-adapted (either in a continuous or discrete meaning for $Q_{t_i}$) and that the observation times $t_i$ are $\F$-stopping times. Also, when referring to It\^{o}-semimartingale and stable convergence in law, we automatically mean that the statement is relative to $\F$. Finally, we assume in (\ref{efficientPrice}) that $W$ is also a Brownian motion under the larger filtration $\calh_t = \calf_t \vee \sigma\{Q_{t_i}, 0\leq i\leq N\}$.} as $N$. The observation times, which satisfy $0 \leq t_0^{(k)} \leq ...\leq t_{N}^{(k)} \leq T $, are possibly asynchronous, i.e. they may differ from one price component to the next (see Section \ref{covariancesec}), and endogenous, i.e. correlated with $X_t$ (as in Section \ref{volatilitysec} and Section \ref{covariancesec}). When observations are regular and synchronous, we have $\Delta_i t :=  t_i - t_{i-1} = T/n := \Delta$ (as in Section \ref{bipowersec}, Section \ref{functionalssec} and Section \ref{sectionVolOfVol}), which implicitly means that $N=n$ and $t_i$ are 1-dimensional, although the price process can be multi-dimensional. 

\smallskip
In view of the empirical findings described in the introduction, it is natural to specify (\ref{genmodel}) as the "pure" parametric noise model via
\begin{eqnarray}
\underbrace{Z_{t_i}}_{\text{observed price}}  =  \underbrace{X_{t_i}}_{\text{efficient price}} + \underbrace{\phi(Q_{t_i}, \theta_0)}_{\text{parametric noise}},
\label{eqDecompositionAdditive}
\end{eqnarray}
where the parameter $\theta_0 \in \Theta \subset \reels^l$ with $\Theta$ a compact set, the impact function $\phi$ is known of class $C^3$ in its second argument, and $Q_{t_i} \in \reels^q$ includes \textit{observable} information\footnote{Note that we do not assume that $Q_t$ exists for any $t \in [0,T] - \{t_0, \cdots, t_N\}$ as it is often the case in the i.i.d setting, see, e.g., the framework in \cite{jacod2009microstructure}.} at the observation time $t_i$ from the limit order book such as the aforementioned trade type (\cite{roll1984simple}), trading volume  (\cite{glosten1988estimating}) and quoted bid-ask spread, but also possibly the duration time between two trades (\cite{almgren2001optimal}), the quoted depth (\cite{kavajecz1999specialist}), the order flow imbalance (\cite{cont2014price}), etc. In practice, $\phi$ could be always chosen as (\ref{spreadmodel}), although we do not specify this particular model in the paper for generality purposes. Further discussion is available in: \cite{black1986noise}, \cite{hasbrouck1993assessing}, \cite{o1995market}, \cite{madhavan1997security}, \cite{madhavan2000market}, \cite{stoll2000presidential} and \cite{hasbrouck2007empirical} among other prominent works. One can also look at the review from \cite{diebold2013correlation}. Finally, on the grounds that the one-lag autocorrelation in mid price returns is often found positive empirically, \cite{andersen2017volatility} extend the usual martingale-plus-noise setting to allow for positivity in the one-lag serial autocorrelation. Note that the model of (\ref{eqDecompositionAdditive}), without residual noise, is theoretically interesting because it allows to adapt the existing methods by plugging in the estimated price in place of the existing estimator.

\smallskip
Finally, we assume that  
\begin{eqnarray}
\label{infoass}
\max_{i,j,k} \big| Q_{t_i^{(k)}}^{(k,j)} \big| = O_\proba (1),
\end{eqnarray}
where $Q_{t_i^{(k)}}^{(k)} = (Q_{t_i^{(k)}}^{(k,1)}, \cdots, Q_{t_i^{(k)}}^{(k,j_k)})$ corresponds to the information related to $X^{(k)}$ at time $t_i^{(k)}$. The latent $d$-dimensional log-price $X_t$ possibly including jumps and its related $d^2$-dimensional spot volatility $c_t = \sigma_t \sigma_t^T$ are It\^{o}-semimartingales of the form 
\begin{eqnarray}
\label{efficientPrice} X_t & = & X_0 + \int_0^t b_s ds + \int_0^t \sigma_s dW_s + \int_0^t \int_\reels \delta (s,z) \mathbf{1}_{\{\mid \mid \delta (s,z) \mid \mid \leq 1\}} (\mu - \nu) (ds, dz)\\
\nonumber & & + \int_0^t \int_\reels \delta (s,z) \mathbf{1}_{\{\mid \mid \delta (s,z) \mid \mid > 1\}} \mu (ds, dz),\\\label{volatilityDefinition}
c_t & = & c_0 + \int_0^t \widetilde{b}_s ds + \int_0^t \widetilde{\sigma}_s dW_s' + \int_0^t \int_\reels \widetilde{\delta} (s,z) \mathbf{1}_{\{\mid \mid \widetilde{\delta} (s,z) \mid \mid \leq 1\}} (\mu - \nu) (ds, dz)\\
\nonumber & & + \int_0^t \int_\reels \widetilde{\delta} (s,z) \mathbf{1}_{\{\mid \mid \widetilde{\delta} (s,z) \mid \mid > 1\}} \mu (ds, dz),
\end{eqnarray}
where $W_t$ is a $d$-dimensional Brownian motion and $W_t'$ is a $d^2$-dimensional Brownian motion possibly correlated with $W_t$, the $d$-dimensional $b_t$ and $d^2$-dimensional $\widetilde{b}_t$ drifts are locally bounded, $\sigma_t$ and the $d^2$-dimensional $\widetilde{c}_t = \widetilde{\sigma}_t \widetilde{\sigma}_t^T$ are locally bounded, $\mu$ is a Poisson random measure on $\reels^{+} \times E$ where $E$ is an auxiliary Polish space, with the related intensity measure, i.e. the nonrandom predictable compensator, $\nu(dt,dz) = dt \otimes \lambda (dz)$ for some $\sigma$-finite measure $\lambda$ on $\reels^+$. Finally, $\delta = \delta(\omega,t,z)$ (respectively $\widetilde{\delta}$) is a predictable $\reels^d$-valued ($\reels^{d \times d}$-valued) function on $\Omega \times \reels^+ \times \reels$ such that locally $\sup_{\omega,t} \mid \mid \delta(\omega,t,z) \mid \mid^r \leq \gamma(z)$ ($\sup_{\omega,t} \mid \mid \widetilde{\delta}(\omega,t,z) \mid \mid^{\widetilde{r}} \leq \gamma(z)$) for some nonnegative bounded $\lambda$-integrable function $\gamma$ and some\footnote{Here the restriction $r < 1$ follows from \cite{jacod2013quarticity}. Indeed, even for the realized volatility problem, (\ref{convergence1}) may not happen in the case $r > 1$. Indeed, it yields a different optimal rate of convergence as shown in \cite{jacod2014remark} (of the form $N^\kappa \textnormal{log}N$ for some $\kappa>0$). Moreover, as explained in their Remark 3.4, a CLT is not even achievable in some cases. The case $r=1$ is let aside. Such bordercase is examined in \cite{vetter2010limit} when considering the bipower variation.} $r \in [0,1)$ ($\widetilde{r} = 2$). Furthermore, we define the "genuine" drift as $b_t' = b_t - \int\delta(t,z) \mathbf{1}_{ \{ \mid \mid \delta(t,z) \mid \mid \leq 1 \}}\lambda(dz)$, the continuous part of $X_t$ as 
$$X_t' =  X_0 + \int_0^t b_s' ds + \int_0^t \sigma_s dW_s,$$
and the jump part as $J_t = \sum_{s \leq t} \Delta X_s$. Key to our analysis is the decomposition 
\begin{eqnarray}
\label{decompoX0}
X_t = X_t' + J_t.
\end{eqnarray}

\section{Estimation under parametric noise}
\subsection{Integrated parameter estimation}
The object of interest can be the integrated volatility, etc. In the non-noisy version of the problem, the typical scenario is such that the high-frequency data user has a data-based estimator $\widetilde{\Xi} (X_{t_0}, \cdots, X_{t_N})$ of (\ref{object0}),  such as the standard realized volatility (RV), i.e. $RV = \sum_{i=1}^N \Delta_i X^2$ where $\Delta_i A = A_{t_i} - A_{t_{i-1}}$,  and possibly a related central limit theorem and a studentized version of it. In all generality, they respectively take the form of 
\begin{eqnarray}
\label{convergence1}
N^{\kappa} \big(\widetilde{\Xi} - \Xi \big) & \rightarrow & \mathcal{MN} \big(AB,AVAR \big),
\end{eqnarray}
where $\kappa > 0$ corresponds to the rate of convergence, and 
\begin{eqnarray}
\label{studentizedconvergence1}
N^{\kappa} \frac{\widetilde{\Xi} - N^{- \kappa} \widetilde{AB} - \Xi}{\sqrt{\widetilde{AVAR}}} & \rightarrow & \mathcal{N}(0,1),
\end{eqnarray}
where $\widetilde{AB}(X_{t_0}, \cdots, X_{t_N})$ and $\widetilde{AVAR}(X_{t_0}, \cdots, X_{t_N})$ are also data-based statistics which respectively correspond to the asymptotic bias and the asymptotic variance estimator. The aim of this section is to equip the high-frequency data user with noise-robust estimators which are based on $\widetilde{\Xi}$.

\smallskip
To estimate the integrated parameter, we first need an estimator of the noise parameter $\theta_0$ defined as $\widehat{\theta}$. We assume that $\widehat{\theta}$ satisfies
\begin{eqnarray}
\label{thetahat}
N (\widehat{\theta} - \theta_0) = O_\proba (1).
\end{eqnarray}
The techniques of this paper are estimator independent and only require (\ref{thetahat}). In Section \ref{parameterestimation}, we provide the form of the estimators from the literature which satisfy (\ref{thetahat}) (see Proposition \ref{thmYingYing} below). Based on $\widehat{\theta}$, the efficient price is naturally estimated as
\begin{eqnarray}
\widehat{X}_{t_i}  =  Z_{t_i} - \phi(Q_{t_i}, \widehat{\theta}).
\end{eqnarray} \label{estimatedPriceFormula}
This estimator was already used in \cite{li2016efficient}, \cite{chaker2017high} and \cite{clinet2019testing}. The related plug-in estimator is constructed as 
\begin{eqnarray}
\widehat{\Xi} =\widetilde{\Xi}(\widehat{X}_{t_0}, \cdots, \widehat{X}_{t_N}).
\end{eqnarray}
For instance, in the case of RV, we obtain that $\widehat{RV} = \sum_{i=1}^N \Delta_i \widehat{X}^2$. Similarly, we introduce $\widehat{AB} = \widetilde{AB}(\widehat{X}_{t_0}, \cdots, \widehat{X}_{t_N})$ and $\widehat{AVAR} = \widetilde{AVAR}(\widehat{X}_{t_0}, \cdots, \widehat{X}_{t_N})$. 

\smallskip
We end this section with a succinct remark on the theoretical implications of (\ref{thetahat}). At this point, the reader may notice that the fast rate $N^{-1}$ in (\ref{thetahat}), which implies the approximation $\widehat{X}_{t_i} = X_{t_i} + \psi_i(\widehat{\theta})$ with $\psi_i(\widehat{\theta}) = \phi(Q_{t_i}, \theta_0)-\phi(Q_{t_i}, \widehat{\theta}) = O_\proba(N^{-1})$ by (\ref{estimatedPriceFormula}), suggests that the perturbation $\psi_i(\widehat{\theta})$ acts as an additional drift component and therefore could be systematically treated as such in all derivations. There is, however, a fundamental difference between the two quantities, in that drift returns $\Delta_i B = \int_{t_{i-1}}^{t_{i}}b_sds$ are typically adapted, hence $\calf_{t_i}$ measurable, whereas $\psi_i(\widehat{\theta})$, through $\widehat{\theta}$, depends not only on the additional observations $(Q_{t_j})_{j = 0,...,N}$ but also on the whole trajectory of the price process $X$, that is $\calf_T$. This may pose a problem when considering, for instance, terms of the form $A_{i-1} \Delta_i M$ where $\Delta_i M$ is a martingale increment (even for the augmented filtration $\calh_t = \calf_t \vee \sigma \{Q_{t_i}, 0\leq i \leq N\}$). Indeed, when $A_{i-1} = \Delta_{i-1} B$, it naturally preserves the martingale structure of $A_{i-1} \Delta_i M$. On the other hand, if $A_{i-1} = \psi_{i-1}(\widehat{\theta})$, such a structure is broken, and additional arguments are necessary in order to retrieve the desired order of the increment $A_{i-1} \Delta_i M$. In this simple example, the problem can be circumvented with a Taylor expansion $\psi_{i-1}(\widehat{\theta}) \approx  (\widehat{\theta}-\theta_0)^T \partial_\theta\psi_{i-1}(\theta_0) + r_{i-1}(\widehat{\theta})$, using that now $\partial_\theta \psi_{i-1}(\theta_0)$ is $\calh_{t_{i-1}}$ measurable, and that $ r_{i-1} (\widehat{\theta})$ is of order $N^{-2}$.

% that we succintly outline in the simple case of RV estimation, with no jumps and regular observations. Applying the plug-in strategy yields
% \beas 
% N^{1/2}\l(\widehat{RV}- \int_0^T \sigma_s^2ds\r) = N^{1/2}\l(RV- \int_0^T \sigma_s^2ds\r) + N^{1/2} \sum_{i=1}^N \psi_i(\widehat{\theta})[\psi_i(\widehat{\theta}) + 2\Delta_i B] + 2N^{1/2} \sum_{i=1}^N \psi_i(\widehat{\theta}) \Delta_i M,
% \eeas 
% where $\Delta_i B = \int_{t_{i-1}}^{t_i} b_sds$ and $\Delta_i M = \Delta_i X - \Delta_i B$ is a martingale increment. In the right-hand side of the above decomposition, the first term is asymptotically mixed normal, while the second term is of order $N^{-1/2}$ as a direct consequence of the fact that $\psi_i(\widehat{\theta}) = O_\proba(N^{-1})$. As for the third term, a standard strategy in the case where $\psi_i(\widehat{\theta})$ is replaced by a drift term, say $\Delta_i B$, consists in first approxima    

\subsection{Noise parameter estimation}
\label{parameterestimation}
Several estimators have been proposed by \cite{li2016efficient}, \cite{chaker2017high}, \cite{clinet2019testing} in different settings when we assume a null residual noise $\epsilon_t = 0$. The estimator from \cite{chaker2017high} coincides with the minimum mean square error (MSE) estimator from \cite{li2016efficient} when $\phi$ is linear (which is the related assumption of the former paper). Moreover, the quasi maximum likelihood estimation (QMLE) from \cite{clinet2019testing} reduces to the MSE, due to the Gaussian form of the quasi likelihood function. Accordingly, we review the MSE procedure below and give the related limit theory for the noise parameter estimator.  

\smallskip
We assume that $\theta:= (\theta_0^{(1)}, \ldots, \theta_0^{(d)})$, where for each component $k=1, \cdots,d$ we have $\theta_0^{(k)} := (\theta_0^{(k,1)}, \ldots, \theta_0^{(k,l_k)})$, which corresponds to the parameter related to the $k$th component of the observed price. More specifically, we assume the componentwise form
\begin{eqnarray}
Z_{t_i}^{(k)}  = X_{t_i}^{(k)} + \phi(Q_{t_i}^{(k)}, \theta_0^{(k)}).
\end{eqnarray}
Accordingly, we consider the estimation of $\theta_0^{(k)}$ separately and thus we can assume that $d=1$ in what follows without loss of generality. The estimator $\widehat{\theta}^{(MSE)}$ is given by
\begin{eqnarray*}
\widehat{\theta}^{(MSE)} & = & \underset{\theta \in \Theta}{\text{argmin }} Q_N(Z,\theta), \text{ where}\\
Q_N(Z,\theta) & = & \frac{1}{2} \sum_{i=1}^N (\Delta_i Z - \mu_i (\theta))^2,
\end{eqnarray*}
where $\mu_i(\theta) = \phi(Q_{t_i}, \theta) - \phi(Q_{{t_{i-1}}}, \theta)$.

%\smallskip
%We now examine the estimation procedure of \cite{clinet2019testing} based on quasi-maximum likelihood estimation (MLE). In this case the quasi log-likelihood function related to $\widehat{\theta}^{(MLE)}$ is given by
%\begin{eqnarray}
%\label{loglikexp}
%l_{exp}(\sigma^2, \theta) = - \frac{N}{2} \textnormal{log}\l(\frac{\sigma^2 T}{N} \r) - \frac{N}{2} \log (2\pi) -\frac{N}{2\sigma^2 T} \widetilde{Y}(\theta)^T  \widetilde{Y}(\theta),
%\end{eqnarray}
%where $\widetilde{Y}(\theta) = (\Delta_1 Z - \mu_1 (\theta), \cdots, \Delta_N Z - \mu_N (\theta))$. In the framework\footnote{The framework is quite general, although no endogeneity in sampling times and no small jumps in price. Proving that (\ref{thetahat}) still holds under endogeneity and small jumps is beyond the scope of this paper.} from the authors, we have that (\ref{thetahat}) holds by virtue of Theorem 3.1.

\smallskip
When $\phi$ is linear, the problem boils down to a linear regression. As a result the estimator  admits the explicit form
\begin{eqnarray}
\widehat{\theta}^{(MSE)} & = &  (\mathbb{M}^T \mathbb{M} )^{-1}\mathbb{M}^T \Delta Z,
\end{eqnarray}
where $\Delta Z := \l(\Delta_1 Z,\cdots,\Delta_N Z\r)$, and as soon as the matrix 
\beas 
\mathbb{M} := \big(\Delta_i Q^{(j)}\big)_{1 \leq i \leq N, 1 \leq j \leq l}
\eeas 
is such that $\mathbb{M}^T\mathbb{M}$ is invertible.\\

\smallskip

We now recall the limit theory associated to $\widehat{\theta}^{(MSE)}$ under the framework of \cite{li2016efficient} which in particular includes jumps with infinite activity. In the next proposition, Condition \textbf{A} assumes the local boundedness of $b$ and $\sigma$, the summability of the jump process, and several standard identifiability assumptions of most functions which depend on the parameter $\theta$ and the sequence $(Q_{t_i})_{i \in \naturels}$. Details can be found in \cite{li2016efficient}, p. 35.

\begin{proposition*}\label{thmYingYing}(Theorem 1 from \cite{li2016efficient}). Assume Condition $\textnormal{\textbf{A}}$ from \cite{li2016efficient}. Then 
$$N(\widehat{\theta}^{(MSE)} - \theta_0) = O_\proba(1).$$ 
\end{proposition*}

\section{Applications of the method}
In what follows, we state that the plug-in estimators are noise-robust for five leading examples taken from the literature, and that the central limit theorems (\ref{convergence}) and (\ref{studentizedconvergence}) hold under parametric noise. In Example \ref{volatilitysec}, we study the threshold realized volatility in the case of infinite activity jumps in price and endogeneity in arrival times. We go one step further the central limit theory of realized volatility with in \cite{li2014realized}, which includes no jumps when there is endogeneity in observation times, to allow for jumps with infinite activity. We first state the central limit theorems related to threshold realized volatility, and then the theory associated to the plug-in estimators. In Example \ref{bipowersec}, we consider the threshold bipower variation under infinite activity jumps and regular observations. In Example \ref{covariancesec}, we develop the Hayashi-Yoshida estimator of high-frequency covariance in a no-jump setup, and asynchronous and endogenous observation times. In Example \ref{functionalssec}, we consider the estimation of functionals of volatility when the price can exhibit jumps with infinite activity and observations are regular. Finally, we address the case of volatility of volatility for continuous price and volatility processes and regular observation times in Example \ref{sectionVolOfVol}.
\subsection{Threshold realized volatility}
\label{volatilitysec}
The parameter is $\xi_t = \sigma_t^2$, and the rate of convergence $\kappa = 1/2$ if observations are not contaminated by the noise. When the price is continuous and observations are regular, a popular estimator of $\Xi = \int_0^T \sigma_s^2 ds$ is RV considered in \cite{andersen2001bdistribution}, \cite{andersen2001distribution}, but also in \cite{barndorff2002econometric}, \cite{barndorff2002estimating}, \cite{meddahi2002theoretical}, etc. \cite{jacod1998asymptotic} showed that 
$$n^{1/2} \Big(RV - \int_0^T \sigma_s^2 ds \Big) \rightarrow \calm\mathcal{N} \Big(0, 2 T\int_0^T \sigma_s^4 ds \Big).$$
When observations are not regular, the AVAR is equal to $2 T\int_0^T \sigma_t^4 dH_t$, where $H_t = \lim  T^{-1}N \sum_{t_{i} \leq t} (t_i - t_{i-1})^2$ is the so-called "quadratic variation of time" (see \cite{zhang2001martingales} and \cite{mykland2006anova}), provided that such a quantity exists. When observations are endogenous, \cite{li2014realized} show that the limit distribution of $n^{1/2} (RV - \Xi)$ includes an asymptotic bias and that the related AVAR is altered. In addition, they prove that the informational content of arrival times can be useful to estimate the asymptotic bias and the AVAR.

\smallskip
Our aim is to allow for parametric noise in this endogenous setting, while also including jumps in the price process. As far as the authors know, no general theory\footnote{Remark 6 (p. 36) in \cite{li2016efficient} suggests that the threshold RV estimator can be used under endogeneity, but there is no formal proof and this is limited to the case of jumps with finite activity.} includes general endogeneity and jumps, even when observations are not noisy. Accordingly, we first extend the results of \cite{li2014realized} when adding jumps. Then, we show that the technology of this paper applies in such a general setting, and this part essentially boils down to applying the arguments of \cite{li2016efficient}.

\smallskip
Although no theory exists under endogeneity, Theorem 13.2.4 (p. 383) in \cite{jacod2011discretization} can be used when observations are regular. We consider a similar threshold RV, originally in the spirit of \cite{mancini2009non} and \cite{mancini2011speed}, and defined as $\widetilde{\Xi} = \sum_{i=1}^N \l(\Delta_i X\r)^2 \mathbf{1}_{ \{\mid \Delta_i X  \mid \leq w_i  \} }$, where $w_i = \alpha \Delta_i t^{\bar{\omega}}$, $\bar{\omega} \in (1/(2(2-r)), 1/2)$ and $\alpha > 0$ is a tuning parameter. In the next theorem, we provide the related central limit theorem and show that the condition of our paper holds.

\begin{theorem*}
\label{volatility} We assume that $\inf_{t \in (0,T]} \sigma_t > 0$. We further suppose that there exists non random $\widetilde{u}_t$ and $\widetilde{v}_t$ such that
\begin{eqnarray}
\label{volatilityass2} n \sum_{0 < t_i \leq t} \l(\Delta_i {X'}\r)^4 & \rightarrow^{\proba} & \int_0^t \widetilde{u}_s \sigma_s^4 ds,\\
\label{volatilityass1}n^{1/2} \sum_{0 < t_i \leq t} \l(\Delta_i {X'}\r)^3 & \rightarrow^{\proba} & \int_0^t \widetilde{v}_s \sigma_s^3 ds,
\end{eqnarray}
where $\widetilde{u}_t \sigma_t^4$, $\widetilde{v}_t \sigma_t^3$ and $\widetilde{v}_t^2 \sigma_t^4$ are integrable, and $\widetilde{v}_t$ locally bounded and bounded away from 0. Furthermore, we assume that $t_i$, $b_t$, $\sigma_t$ and $\delta$ are generated by finitely many Brownian motions\footnote{i.e. we assume that $t_i$ are $\mathbf{G}$-stopping times,  where $\mathbf{G} =(\calg_t)_{t \in [0,T]}$ is a sub-filtration of $\mathbf{F}$ generated by finitely many Brownian motions, and that $b_t$, $\sigma_t$ and $\delta$ are adapted to $\mathbf{G}$.}. Finally we assume that $N/n \rightarrow^\proba F$ for some random variable $F$, and that $n \Delta_i t$ are locally bounded and locally bounded away from 0. Then, stably in law as $n \rightarrow \infty$, we have 
\begin{eqnarray}
\label{thRV3} N^{1/2} ( \widetilde{\Xi} - \Xi) \rightarrow \frac{2}{3} \int_0^T v_s \sigma_s dX_s' + \int_0^T \sqrt{\frac{2}{3}u_s - \frac{4}{9} v_s^2} \sigma_s^2 dB_s,
\end{eqnarray}
where $v_s = \sqrt{F} \widetilde{v}_s$, $u_s = F \widetilde{u}_s$ and $B_t$ is a standard Brownian motion independent of the other quantities\footnote{Here and in the other theorems, we mean that $B_t$ is independent of the underlying $\sigma$-field $\mathbf{F}$.}. Moreover, we have
\begin{eqnarray}
\label{thRV2}
N^{1/2} (\widehat{\Xi} - \Xi) & \rightarrow & \frac{2}{3} \int_0^T v_s \sigma_s dX_s' + \int_0^T \sqrt{\frac{2}{3}u_s - \frac{4}{9} v_s^2} \sigma_s^2 dB_s.
\end{eqnarray}
\end{theorem*}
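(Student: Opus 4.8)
The proof has two essentially independent components: the central limit theorem (\ref{thRV3}) for the uncontaminated threshold estimator $\widetilde\Xi$, and the plug-in statement (\ref{thRV2}), which I would reduce to the former by showing that replacing $X$ by $\widehat X$ is negligible at the rate $N^{1/2}$.

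\textit{Step 1: the limit theorem for $\widetilde\Xi$.} Using the decomposition (\ref{decompoX0}), I would first show that the threshold strips away the jumps without disturbing the continuous part, i.e. $N^{1/2}(\widetilde\Xi-\sum_{i}\Delta_i{X'}^2)\rightarrow^{\proba}0$. With $r<1$ and $\bar\omega\in(1/(2(2-r)),1/2)$ this is the standard truncation estimate (in the spirit of \cite{mancini2009non}): on an interval carrying a large jump the indicator vanishes and discards it; the small-jump contribution is of smaller order once $\bar\omega>1/(2(2-r))$; and a purely continuous increment, of size $\Delta^{1/2}$, exceeds the threshold $w_i\sim\Delta^{\bar\omega}$ only with super-polynomially small probability because $\bar\omega<1/2$. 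This reduces the statement to the realized volatility $\sum_i\Delta_i{X'}^2$ of the continuous price under endogenous sampling, for which I would invoke the central limit theorem of \cite{li2014realized}. Hypotheses (\ref{volatilityass1}) and (\ref{volatilityass2}) furnish precisely the limits of the normalized third and fourth power variations entering their expansion, producing the endogeneity bias $\tfrac23\sqrt T\int_0^T v_s\sigma_s\,dX_s'$ and the mixed-normal term with variance coefficient $\tfrac23u_s-\tfrac49v_s^2$; the convergence $N/n\rightarrow^{\proba}F$ turns $\widetilde u,\widetilde v$ into $u,v$.

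\textit{Step 2: the plug-in reduction.} Write $\widehat X_{t_i}=X_{t_i}+e_i$ with $e_i=\phi(Q_{t_i},\theta_0)-\phi(Q_{t_i},\widehat\theta)$. A Taylor expansion in the noise parameter, together with the regularity of $\phi$ and the bound (\ref{infoass}) on $Q$, gives $e_i=-\psi_i^{T}(\widehat\theta-\theta_0)+O_\proba(|\widehat\theta-\theta_0|^2)$ uniformly in $i$, where $\psi_i=\partial_\theta\phi(Q_{t_i},\theta_0)$; by (\ref{thetahat}) each $e_i$, hence each $\Delta_ie$, is $O_\proba(N^{-1})$ uniformly. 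On intervals carrying a large jump both $|\Delta_iX|$ and $|\Delta_i\widehat X|$ exceed $w_i$, so both indicators vanish; on the continuous intervals the two indicators coincide except on the event where $|\Delta_iX|$ falls within $|\Delta_ie|=O_\proba(N^{-1})$ of $w_i\sim\Delta^{\bar\omega}$, which by the same separation of scales as in Step 1 contributes $o_\proba(N^{-1/2})$. Modulo these negligible terms, $\widehat\Xi-\widetilde\Xi=\sum_i(2\Delta_iX\,\Delta_ie+(\Delta_ie)^2)$ with the sum over surviving (continuous) intervals; the square term is bounded by $N\cdot O_\proba(N^{-2})=O_\proba(N^{-1})=o_\proba(N^{-1/2})$, and it remains to control the cross term.

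\textit{Main obstacle.} The naive estimate $\sum_i2\Delta_iX\,\Delta_ie=O_\proba(N\cdot\Delta^{1/2}\cdot N^{-1})=O_\proba(N^{-1/2})$ is exactly one factor too large, so genuine cancellation is required. Here I would exploit the standing assumption that $W$ is a Brownian motion under $\calh_t=\calf_t\vee\sigma\{Q_{t_i},0\le i\le N\}$. Since this $\sigma$-field contains \emph{all} the increments $\Delta_i\psi=\psi_i-\psi_{i-1}$, the latter are $\calh_{t_{i-1}}$-measurable; and since the $t_i$ are $\calh$-stopping times and $\int\sigma\,dW$ is an $\calh$-martingale, the continuous martingale part of $\Delta_iX'$ is conditionally centred given $\calh_{t_{i-1}}$ (on these intervals $\Delta_iX=\Delta_iX'$). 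Consequently $\Delta_iX'\,\Delta_i\psi-\esp[\Delta_iX'\,\Delta_i\psi\mid\calh_{t_{i-1}}]$ is an $\calh$-martingale difference with summed conditional variance of order $\sum_i\Delta_i t=O_\proba(1)$ and compensator of the same order, whence $\sum_i\Delta_iX'\,\Delta_i\psi=O_\proba(1)$ rather than $O_\proba(N^{1/2})$. Multiplying by $\widehat\theta-\theta_0=O_\proba(N^{-1})$ shows the cross term is $O_\proba(N^{-1})$, hence $o_\proba(N^{-1/2})$. Combining the pieces yields $N^{1/2}(\widehat\Xi-\widetilde\Xi)=o_\proba(1)$, so (\ref{thRV2}) follows from (\ref{thRV3}). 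The delicate point is precisely this cancellation: it hinges on the conditional independence of the efficient-price innovations from the entire order-book path, not on any martingale property in the observation filtration alone, and it is what allows the endogeneity present in Step 1 to leave the plug-in correction asymptotically inert.
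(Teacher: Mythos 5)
Your proposal is correct and follows essentially the same route as the paper's proof: reduce the truncated statistic to $\sum_i (\Delta_i X')^2$ and invoke Theorem 1 of \cite{li2014realized} for the endogenous CLT, then kill the plug-in correction by a mean-value expansion in $\theta$ together with the key martingale cancellation for $\sum_i \Delta_i X^c \, \Delta_i \psi$ under the enlarged filtration $\calh_t = \calf_t \vee \sigma\{Q_{t_i}, 0\leq i\leq N\}$ --- the paper applies (2.2.35) of \cite{jacod2011discretization} at exactly this point, and merely organizes matters differently by proving (\ref{thRV2}) directly and reading off (\ref{thRV3}) as the special case $\phi=0$, rather than proving (\ref{thRV3}) first and comparing $\widehat{\Xi}$ with $\widetilde{\Xi}$. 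One slip worth noting: under infinite-activity jumps your parenthetical claim that $\Delta_i X = \Delta_i X'$ on surviving intervals is false, since small jumps pass the truncation, so the cross term retains a piece $\sum_i \Delta_i J \, \Delta_i e$; this is harmless and requires no cancellation, because $|\Delta_i e| = O_\proba(N^{-1})$ uniformly and $\sum_i |\Delta_i J| \mathbf{1}_{\{|\Delta_i J| \leq C w_i\}} \leq C w^{1-r} \sum_i |\Delta_i J|^r = o_\proba(1)$, which is precisely the $r$-variation bound the paper uses to dispose of its terms $II$ and $III$ (and which also rigorizes your ``separation of scales'' treatment of the indicator-boundary events on jump-carrying intervals).
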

\begin{remark*}
\textnormal{If observations are regular, then $F=1$, $u_s = 3T$ and $v_s = 0$ for all $s \in [0,T]$. Therefore,
(\ref{thRV3}) and (\ref{thRV2}) can be specified as}
\begin{eqnarray}
n^{1/2} ( \widetilde{\Xi} - \Xi) & \rightarrow & \calm\mathcal{N} \Big(0, 2 T \int_0^T \sigma_s^4 ds \Big),\\
n^{1/2} ( \widehat{\Xi} - \Xi) & \rightarrow & \calm\mathcal{N} \Big(0, 2 T \int_0^T \sigma_s^4 ds \Big).
\end{eqnarray}
\end{remark*}
We provide now jump-robust estimators of $AB =(2/3) \int_0^T v_s \sigma_s dX_s'$ and $AVAR = \int_0^T (\frac{2}{3}u_s - \frac{4}{9} v_s^2) \sigma_s^4 ds$ based on the non jump-robust estimators provided in  \cite{li2014realized}. Accordingly, we chop the data into $B$ blocks of $h$ observations (except for the last block which might include less observations). We set $h=\lfloor n^{\beta} \rfloor$, where $1/2 < \beta < 1$. We can estimate $v_{t_{hi}}\sigma_{t_{hi}}$ as
$$\widetilde{v \sigma}_{i} =  \frac{N^{1/2} \sum_{j=h(i-1)+1}^{hi} \l(\Delta_j X\r)^3 \mathbf{1}_{ \{\mid \Delta_j X  \mid \leq w_j  \} }}{\sum_{j=h(i-1)+1}^{hi} \l(\Delta_j X\r)^2 \mathbf{1}_{ \{\mid \Delta_j X  \mid \leq w_j  \} }},$$ 
and AB and AVAR as
\begin{eqnarray*}
\widetilde{AB} & = & \sum_{i=1}^{B} \underbrace{\frac{2}{3} \widetilde{v \sigma}_{i} \Bigg\{ \sum_{j=h(i-1)+1}^{hi} \Delta_j X \mathbf{1}_{ \{\mid \Delta_j X  \mid \leq w_j  \} }  \Bigg\}}_{\widetilde{AB}_i},\\
\widetilde{AVAR} & = & \frac{2N}{3}  \sum_{i=1}^N \l(\Delta_i X\r)^4 \mathbf{1}_{ \{\mid \Delta_i X  \mid \leq w_i  \} }  - \sum_{i=1}^{B} \widetilde{AB}_{i}^2.
\end{eqnarray*}
Recalling that $\widehat{AB}$ and $\widehat{AVAR}$ are constructed respectively as $\widetilde{AB}$ and $\widetilde{AVAR}$ when replacing $X$ by $\widehat{X}$, we provide now the studentized version of the previous central limit theorems. 
\begin{corollary*}
\label{volatilitystudent}
We have 
\begin{eqnarray}
\label{volatilitystudent1} N^{1/2} \frac{ \widetilde{\Xi} - N^{-1/2} \widetilde{AB} - \Xi}{\sqrt{\widetilde{AVAR}}} & \rightarrow & \mathcal{N} (0,1),\\
\label{volatilitystudent2} N^{1/2} \frac{ \widehat{\Xi} - N^{-1/2} \widehat{AB} - \Xi}{\sqrt{\widehat{AVAR}}} & \rightarrow & \mathcal{N} (0,1).
\end{eqnarray}
\end{corollary*}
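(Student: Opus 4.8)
The plan is to obtain both statements from the stable central limit theorems (\ref{thRV3}) and (\ref{thRV2}) of Theorem \ref{volatility} via a stable version of Slutsky's lemma, once the bias and variance estimators are shown to be consistent. Write the left-hand side of (\ref{volatilitystudent1}) as
\begin{equation*}
\frac{N^{1/2}(\widetilde{\Xi}-\Xi)-\widetilde{AB}}{\sqrt{\widetilde{AVAR}}}.
\end{equation*}
By (\ref{thRV3}), $N^{1/2}(\widetilde{\Xi}-\Xi)$ converges stably in law to $AB+\sqrt{AVAR}\,Z$, where $AB$ and $AVAR$ are the $\mathbf{F}$-measurable quantities named in the theorem and $Z\sim\mathcal{N}(0,1)$ is independent of $\mathbf{F}$ (the second term in (\ref{thRV3}) being, conditionally on $\mathbf{F}$, a centered Gaussian integral of variance $AVAR$). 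Hence it suffices to prove $\widetilde{AB}\rightarrow^{\proba}AB$ and $\widetilde{AVAR}\rightarrow^{\proba}AVAR$: since $AB$ and $AVAR$ are $\mathbf{F}$-measurable, joint convergence in probability of $(\widetilde{AB},\widetilde{AVAR})$ to $(AB,AVAR)$ upgrades the stable convergence to one of the whole triple, and the continuous-mapping step then sends the displayed ratio to $(AB+\sqrt{AVAR}\,Z-AB)/\sqrt{AVAR}=Z$.

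First I would treat the block-wise building block $\widetilde{v\sigma}_i$. On each block of $h=\lfloor n^{\beta}\rfloor$ observations, with $\beta\in(1/2,1)$, the truncation $\mathbf{1}_{\{|\Delta_j X|\leq w_j\}}$ asymptotically removes the jump increments by the usual Mancini-type control (using $\bar\omega\in(1/(2(2-r)),1/2)$ and $r<1$), so that $X$ may be replaced by its continuous part $X'$ at no cost. The denominator of $\widetilde{v\sigma}_i$ then behaves like the local quadratic variation and the $N^{1/2}$-scaled numerator like a local version of (\ref{volatilityass1}); invoking $N/n\rightarrow^{\proba}F$, the ratio localizes to $v_{t_{hi}}\sigma_{t_{hi}}=\sqrt{F}\,\widetilde{v}_{t_{hi}}\sigma_{t_{hi}}$, where $\inf_t\sigma_t>0$ and the boundedness away from $0$ of $\widetilde v$ keep the denominator nondegenerate. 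Feeding this into $\widetilde{AB}$, the inner sum $\sum_j\Delta_j X\,\mathbf{1}$ approximates the continuous block increment $\Delta_{\text{block}}X'$, and $\sqrt{T}\sum_i\widetilde{AB}_i$ becomes a Riemann--It\^{o} sum converging in probability to $AB$. For the variance, (\ref{volatilityass2}) together with $N/n\rightarrow^{\proba}F$ drives the truncated fourth-power term $\tfrac{2NT}{3}\sum_i\Delta_i X^4\mathbf{1}$ to the $\tfrac{2}{3}u_s$ part of $AVAR$, while the subtracted sum $\sum_i\widetilde{AB}_i^2$ converges, as a quadratic-variation--weighted sum, to the $-\tfrac{4}{9}v_s^2$ correction; the integrability of $\widetilde u_s\sigma_s^4$, $\widetilde v_s\sigma_s^3$ and $\widetilde v_s^2\sigma_s^4$ guarantees finite, nondegenerate limits. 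This yields $\widetilde{AVAR}\rightarrow^{\proba}AVAR$ and completes (\ref{volatilitystudent1}).

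For the plug-in statement (\ref{volatilitystudent2}) I would show that passing from $X$ to $\widehat X$ is asymptotically negligible at the relevant order. Since $\widehat X_{t_i}-X_{t_i}=\phi(Q_{t_i},\theta_0)-\phi(Q_{t_i},\widehat\theta)$, a Taylor expansion of the $C^3$ impact function $\phi$, combined with the rate (\ref{thetahat}) and the boundedness (\ref{infoass}), gives $\max_i|\widehat X_{t_i}-X_{t_i}|=\bop{N^{-1}}$, hence $\max_i|\Delta_i\widehat X-\Delta_i X|=\bop{N^{-1}}$. Propagating this bound through the polynomial functionals (of degree at most four) that define $\widehat{AB}$ and $\widehat{AVAR}$ --- exactly as the plug-in error is handled in the proof of (\ref{thRV2}) --- yields $\widehat{AB}-\widetilde{AB}\rightarrow^{\proba}0$ and $\widehat{AVAR}-\widetilde{AVAR}\rightarrow^{\proba}0$, so $\widehat{AB}\rightarrow^{\proba}AB$ and $\widehat{AVAR}\rightarrow^{\proba}AVAR$ as well. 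As (\ref{thRV2}) already provides the stable limit $AB+\sqrt{AVAR}\,Z$ for $N^{1/2}(\widehat\Xi-\Xi)$, the same stable-Slutsky argument as above gives (\ref{volatilitystudent2}).

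I expect the main obstacle to be the block-wise consistency of $\widetilde{AB}$ and $\widetilde{AVAR}$ in the present general regime, where infinite-activity jumps and endogenous, irregular sampling coexist --- a setting not covered by the continuous, regular analysis of \cite{li2014realized} on which these estimators are patterned. The delicate points are: controlling the truncation so that the small-jump part is discarded uniformly across the $B\approx n/h$ blocks; keeping the ratio $\widetilde{v\sigma}_i$ stable despite its random denominator; and, most importantly, showing that the locally estimated $v_{t_{hi}}\sigma_{t_{hi}}$ aggregate into the correct stochastic integral against $dX'$ and the correct quadratic-variation integral, uniformly enough to give convergence in probability rather than mere block-by-block consistency, while the block length $h=\lfloor n^{\beta}\rfloor$ and the random count $N$ are reconciled through $N/n\rightarrow^{\proba}F$.
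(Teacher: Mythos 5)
Your proposal is correct and follows essentially the same route as the paper: both reduce the corollary to the consistency $\widetilde{AB} \rightarrow^{\proba} AB$ and $\widetilde{AVAR} \rightarrow^{\proba} AVAR$ (removing the truncation and the parametric noise, replacing $\Delta_i \widehat{X}$ by $\Delta_i X'$, handling the blockwise ratio $\widetilde{v\sigma}_i$ by a Taylor argument with (\ref{volatilityass1})--(\ref{volatilityass2}) and $N/n \rightarrow^{\proba} F$, and passing the Riemann sums to the stochastic-integral limit --- the paper invokes Theorem I.4.31(iii) of Jacod--Shiryaev for this step), before concluding by stable Slutsky with the limit of Theorem \ref{volatility}. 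The only (cosmetic) difference is the ordering: you establish (\ref{volatilitystudent1}) first and transfer to the plug-in version via the $O_\proba(N^{-1})$ perturbation bound, whereas the paper proves (\ref{volatilitystudent2}) directly and obtains (\ref{volatilitystudent1}) as the special case $\phi = 0$.
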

\begin{remark*}\textnormal{ If observations are regular, there is no asymptotic bias and $AVAR$ can be estimated using the plug-in estimator of quarticity obtained in Section \ref{functionalssec}. In view of Theorem \ref{theoremG} which implies the consistency of the plug-in estimator, we obtain directly by the stable convergence obtained in Theorem \ref{volatility} that (\ref{volatilitystudent2}) holds.}
\end{remark*}

\begin{remark*} \textnormal{(estimating volatility under i.i.d noise)
Alternative approaches to estimate integrated volatility under latent i.i.d noise include and are not limited to: the Quasi-Maximum Likelihood Estimator (QMLE) from \cite{ait2005often}  which was later shown to be robust to time-varying volatility in \cite{xiu2010quasi}, the Two-Scale Realized Volatility in \cite{zhang2005tale}, the multi-Scale realized volatility in \cite{zhang2006efficient}, the pre-averaging approach  in \cite{jacod2009microstructure}, realized kernels in \cite{barndorff2008designing} and the spectral approach considered in \cite{altmeyer2015functional} based on \cite{reiss2011asymptotic}. \cite{clinet2018efficient} discussed AVAR reduction when considering local estimators. In addition, \cite{li2013volatility} consider endogenous arrival times.}
\end{remark*}

\subsection{Threshold bipower variation}
\label{bipowersec}
Here again $\xi_t = \sigma_t^2$. The bipower variation $BV = \frac{\pi}{2} \sum_{i=2}^N \mid\Delta_i X \mid \mid \Delta_{i-1} X \mid$ (more generally  multipower variation from \cite{barndorff2004power} and \cite{barndorff2006econometrics}) was originally introduced as an alternative measure robust to finite-activity jumps. In case of regular observations and no jump, \cite{barndorff2006central} and \cite{barndorff2006limit} established the central limit theory. See also \cite{kinnebrock2008note} for related development. In case of finite-activity jumps, see also \cite{barndorff2006limit2}.

\smallskip 
If jumps exhibit infinite activity,  \cite{vetter2010limit} shows that BV is no longer consistent, but the jump-robust threshold estimator 
$$\widetilde{\Xi} = \frac{\pi}{2} \sum_{i=2}^N \mid \Delta_i X \mid \mathbf{1}_{ \{\mid \Delta_i X  \mid \leq w  \} } \mid \Delta_{i-1} X \mid \mathbf{1}_{ \{\mid \Delta_{i-1} X  \mid \leq w  \} }$$
is consistent, where  $w = \alpha \Delta^{\bar{\omega}}$, $\bar{\omega} \in (0, 1/2)$. Moreover, he also shows the related central limit theory. See also \cite{corsi2010threshold} for related work. Finally, the general theory (Theorem 13.2.1 (p. 380)) from \cite{jacod2011discretization} can be applied too. All those papers have in common that they assume regular observations, and we follow the same setting to show that the techniques of this paper can be used in this example too. We provide the formal result in what follows.

\begin{theorem*} \label{bipower}
We have that 
\begin{eqnarray}
\label{bipowereq0}
n^{1/2} \big(\widehat{\Xi} - \widetilde{\Xi} \big) \rightarrow^\proba 0.
\end{eqnarray}
 In particular,  
 stably in law as $n \rightarrow \infty$, 
 \begin{eqnarray}
 \label{bipowereq}
 n^{1/2} (\widehat{\Xi} - \Xi ) \rightarrow \frac{\pi}{2}\sqrt{\l(1+\frac{4}{\pi} - \frac{12}{\pi^2}\r)T} \int_0^T \sigma_s^2 d B_s ,
 \end{eqnarray}
 where $B_t$ is a Brownian motion independent of the other quantities.
\end{theorem*}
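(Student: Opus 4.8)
The plan is to establish the key negligibility statement (\ref{bipowereq0}), namely $n^{1/2}(\widehat{\Xi} - \widetilde{\Xi}) \to^{\proba} 0$, after which (\ref{bipowereq}) follows at once: the central limit theorem for the unobservable threshold bipower variation $\widetilde{\Xi}$ is already known (it holds by \cite{vetter2010limit}, or by Theorem 13.2.1 in \cite{jacod2011discretization}), and one checks that the constant $\frac{\pi}{2}\sqrt{(1+4/\pi-12/\pi^2)T}$ is exactly the classical bipower asymptotic standard deviation; a stable-convergence version of Slutsky's lemma then transfers the limit from $\widetilde{\Xi}$ to $\widehat{\Xi}$. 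So the whole content is the plug-in comparison (\ref{bipowereq0}).

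To control $\widehat{\Xi} - \widetilde{\Xi}$, I would write $\widehat{X}_{t_i} = X_{t_i} + \rho_i$ with $\rho_i = \phi(Q_{t_i},\theta_0) - \phi(Q_{t_i},\widehat{\theta})$. A Taylor expansion in the parameter, combined with the compactness of $\Theta$, the smoothness of $\phi$, the bound $\max_i |Q_{t_i}| = O_\proba(1)$ from (\ref{infoass}), and the rate (\ref{thetahat}), yields the uniform estimates $\max_i |\rho_i| = O_\proba(n^{-1})$ and $\max_i |\Delta_i \rho| = O_\proba(n^{-1})$, together with the factorization $\Delta_i \rho = -\Delta_i(\partial_\theta \phi(Q,\theta_0))(\widehat{\theta}-\theta_0) + O_\proba(n^{-2})$. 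I would then split $\widehat{\Xi}-\widetilde{\Xi}$ (up to the factor $\pi/2$) into a magnitude term, where the true thresholds $\mathbf{1}_{\{|\Delta_i X| \leq w\}}$ are retained, and an indicator-mismatch term collecting the indices at which $\mathbf{1}_{\{|\Delta_i \widehat{X}| \leq w\}} \neq \mathbf{1}_{\{|\Delta_i X| \leq w\}}$.

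For the magnitude term, on the set where both increments lie below the threshold I would expand $|\Delta_i X + \Delta_i \rho| = |\Delta_i X| + \mathrm{sgn}(\Delta_i X)\Delta_i \rho + \ldots$ to isolate the leading contribution $\sum_i \mathrm{sgn}(\Delta_i X)\Delta_i \rho\,|\Delta_{i-1}X|$ (plus the symmetric term in $i-1$). Each summand is of order $n^{-3/2}$, so the crude bound only gives $O_\proba(1)$ after multiplication by $n^{1/2}$, and the decisive step is to exploit cancellation. Factoring out $(\widehat{\theta}-\theta_0) = O_\proba(n^{-1})$ and invoking the model assumption that $W$ remains a Brownian motion under the enlarged filtration $\calh_t = \calf_t \vee \sigma\{Q_{t_i}\}$ — so that every $Q_{t_i}$, hence every deterministic-looking factor $\Delta_i(\partial_\theta \phi)$, is $\calh_{t_{i-1}}$-measurable while $\mathrm{sgn}(\Delta_i X)$ stays conditionally symmetric — the remaining sum becomes, up to lower-order drift and small-jump corrections, a martingale in $i$ with respect to $(\calh_{t_i})$. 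Its predictable quadratic variation is $\sum_i (\Delta_i \partial_\theta \phi)^2 |\Delta_{i-1}X|^2 = O_\proba(1)$, so the sum is $O_\proba(1)$; multiplying by $(\widehat{\theta}-\theta_0)$ and by $n^{1/2}$ gives $O_\proba(n^{-1/2}) \to 0$. I expect this martingale cancellation, and in particular the careful verification that the enlarged-filtration assumption legitimately decouples the sign of the Brownian increment from the noise perturbation, to be the main obstacle.

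The indicator-mismatch term is the most technically delicate point under infinite jump activity. A flip of $\mathbf{1}_{\{|\Delta_i \widehat{X}| \leq w\}}$ relative to $\mathbf{1}_{\{|\Delta_i X| \leq w\}}$ forces $\big|\,|\Delta_i X| - w\,\big| \leq |\Delta_i \rho| = O_\proba(n^{-1})$, and each such index contributes at most $\approx w\,|\Delta_{i-1}\widehat{X}|$. Since $w = \alpha \Delta^{\bar{\omega}}$ with $\bar{\omega} < 1/2$ is of much larger order than the typical continuous increment $n^{-1/2}$, the continuous part essentially never lands within $O_\proba(n^{-1})$ of $w$ (a Gaussian large-deviation bound summed over the $n$ indices); the only relevant increments carry a jump of size close to $w$, whose count I would control through the activity condition $r \in [0,1)$ via an estimate of the type $\lambda(\{|z| \in [w-cn^{-1}, w+cn^{-1}]\}) = O(n^{-1}w^{-1-r})$. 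Collecting these bounds, the mismatch contribution after scaling by $n^{1/2}$ is of order $n^{-1+\bar{\omega} r} \to 0$, using $\bar{\omega} r < 1/2$. Combining the magnitude and mismatch estimates yields (\ref{bipowereq0}), and the theorem follows.
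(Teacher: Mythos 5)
Your overall architecture is the same as the paper's: everything is reduced to the negligibility (\ref{bipowereq0}), after which the limit (\ref{bipowereq}) follows from Theorem 3.3 in \cite{vetter2010limit} and stable Slutsky. Your magnitude-term analysis also matches in spirit: after its reduction to continuous $X$, the paper bounds the difference by three cross terms $I$, $II$, $III$, handles $I$ crudely via $I \leq C n^{3/2}\|\widehat{\theta}-\theta_0\|^2$, and defers the cancellation needed for $II$ and $III$ to the proof of Theorem 2 in \cite{li2016efficient} -- which is precisely your martingale-under-the-enlarged-filtration idea. One caveat there: $\widehat{\theta}$ is not adapted, so you cannot literally "factor out $(\widehat{\theta}-\theta_0)$" and declare the remaining sum a martingale; the argument has to be run uniformly over the ball $\|\theta - \theta_0\| \leq K/n$ (e.g. via the Sobolev-inequality device the paper uses in its covariance proof). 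That omission is fixable.

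The genuine gap is your treatment of the indicator-mismatch term in the presence of infinite-activity jumps. Your count of flipping indices rests on the annulus estimate $\lambda(\{|z| \in [w - cn^{-1}, w + cn^{-1}]\}) = O(n^{-1}w^{-1-r})$, which tacitly assumes a Lévy density behaving like $|z|^{-1-r}$; the paper only assumes $\sup_{\omega,t}\|\delta(\omega,t,z)\|^{r} \leq \gamma(z)$ with $\gamma$ bounded and $\lambda$-integrable, an $r$-moment condition that imposes no regularity of $\lambda$ near the deterministic, $n$-dependent radius $w_n$ (for instance, $\lambda$ may place atoms at the values $w_{n_k}$ along a subsequence). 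In addition, your Gaussian large-deviation claim that the "continuous part essentially never lands within $O_\proba(n^{-1})$ of $w$" is invoked before jumps are removed, but $\Delta_i X$ also contains the compensated small-jump martingale, whose probability of landing near $w$ is only polynomially small, not exponentially small; increments made of many small jumps accumulating near the threshold are the dominant mismatch scenario and are not covered by either of your two cases. Symptomatically, your bookkeeping yields convergence under "$\bar{\omega} r < 1/2$", which holds vacuously, while the restriction $\bar{\omega} > 1/(2(2-r))$ is genuinely needed -- and it is exactly where the paper uses it: the paper sidesteps your counting problem by first proving $n^{1/2}(\widehat{\Xi} - \widehat{\Xi}') \rightarrow^\proba 0$ via Lemma 13.2.6 of \cite{jacod2011discretization}, adapted to the filtration $\calg_i = \calf_i \vee \sigma\{Q_{t_i}\}$ and the perturbed drift $\Delta_i \breve{B} + \psi_i(\widehat{\theta})$, which gives $n^{1/2}\esp|\widehat{\Xi} - \widehat{\Xi}'| \leq a_n n^{1/2-(2-r)\bar{\omega}} \to 0$, and only then proves the mismatch bound (\ref{devIndicator}) for the continuous process, where arbitrarily high moments of $\Delta_i X$ make the flip probability super-polynomially small. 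To repair your proof you would need either to adopt that two-step reduction or to add an assumption on $\lambda$ that the paper does not make.
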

In this example, we have that $AVAR = \frac{\pi^2}{4}(1+\frac{4}{\pi} - \frac{12}{\pi^2})T \int_0^T \sigma_s^4 ds$, which can be estimated by $\widehat{AVAR} = \frac{\pi^2}{4}(1+\frac{4}{\pi} - \frac{12}{\pi^2})T \widehat{\int_0^T \sigma_s^4 ds}$, where the plug-in estimator of quarticity $\widehat{\int_0^T \sigma_s^4 ds}$ is defined as a particular case of Section \ref{functionalssec} (i.e $\widehat{\int_0^T \sigma_s^4 ds}$ corresponds to the estimator given in (\ref{eqGEstimator}) below with $g(x) = x^2$). We also provide the related studentized central limit theorem.
\begin{corollary*}
\label{bipowerstudent}
We have 
\begin{eqnarray}
\label{bipowerstudent0} n^{1/2} \frac{ \widehat{\Xi} - \Xi}{\sqrt{\widehat{AVAR}}} & \rightarrow & \mathcal{N} (0,1).
\end{eqnarray}
\end{corollary*}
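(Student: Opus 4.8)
The plan is to deduce the studentized statement from the stable central limit theorem already established for the numerator in Theorem \ref{bipower}, together with a consistency statement for the studentizing quantity $\widehat{AVAR}$, and then to invoke the standard calculus of stable convergence in law. First I would recall (\ref{bipowereq}): stably in law,
$$n^{1/2}(\widehat{\Xi} - \Xi) \rightarrow \frac{\pi}{2}\sqrt{\l(1+\frac{4}{\pi} - \frac{12}{\pi^2}\r)T}\int_0^T \sigma_s^2 dB_s,$$
with $B$ a Brownian motion independent of $\mathbf{F}$. Since $\sigma$ is $\mathbf{F}$-measurable and $B$ is independent of $\mathbf{F}$, the It\^o isometry shows that, conditionally on $\mathbf{F}$, the limit is centered Gaussian with variance $\frac{\pi^2}{4}(1+\frac{4}{\pi} - \frac{12}{\pi^2})T\int_0^T \sigma_s^4 ds = AVAR$. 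Hence the limit may be written as $\sqrt{AVAR}\,\mathcal{Z}$, where $\mathcal{Z} \sim \mathcal{N}(0,1)$ is independent of $\mathbf{F}$ and $AVAR$ is $\mathbf{F}$-measurable.

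Second, I would prove that $\widehat{AVAR} \rightarrow^{\proba} AVAR$. Because $\widehat{AVAR} = \frac{\pi^2}{4}(1+\frac{4}{\pi} - \frac{12}{\pi^2})T\,\widehat{\int_0^T \sigma_s^4 ds}$ and the deterministic prefactor is a fixed positive constant, this reduces entirely to the consistency of the plug-in quarticity estimator $\widehat{\int_0^T \sigma_s^4 ds} \rightarrow^{\proba} \int_0^T \sigma_s^4 ds$. Quarticity is the functional-of-volatility problem associated with $g(x)=x^2$, so this is a particular case of the limit theory developed in Example \ref{functionalssec}, whose consistency conclusion for the plug-in estimator applies under the regular-observation framework of the present example; I would simply cite that result rather than re-derive it.

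Finally I would combine the two ingredients through the usual ratio argument for stable convergence: stable convergence of $n^{1/2}(\widehat{\Xi}-\Xi)$ together with the convergence in probability $\widehat{AVAR}\rightarrow^{\proba} AVAR$ yields joint stable convergence of the pair, and since $AVAR>0$ almost surely (the constant $1+\frac{4}{\pi}-\frac{12}{\pi^2}$ is strictly positive and $\int_0^T \sigma_s^4 ds>0$ under the standing assumptions), the continuous mapping theorem gives
$$n^{1/2}\frac{\widehat{\Xi}-\Xi}{\sqrt{\widehat{AVAR}}} \rightarrow \frac{\sqrt{AVAR}\,\mathcal{Z}}{\sqrt{AVAR}} = \mathcal{Z} \sim \mathcal{N}(0,1),$$
which is (\ref{bipowerstudent0}). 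The only substantive step is the consistency of $\widehat{AVAR}$; since both the quarticity estimator and its plug-in counterpart are treated in Example \ref{functionalssec}, the main obstacle is merely to confirm that those arguments indeed cover the $g(x)=x^2$ case under the assumptions in force here, after which the conclusion follows immediately from the algebra of stable convergence.
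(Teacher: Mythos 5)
Your proposal is correct and follows essentially the same route as the paper, whose proof is simply the remark that everything reduces to the consistency of $\widehat{AVAR}$, obtained as a corollary of Theorem \ref{theoremG} applied to the quarticity functional; you merely spell out the standard stable-convergence/Slutsky details that the paper leaves implicit. Your check that $g(x)=x^2$ fits the framework of Example \ref{functionalssec} (it satisfies (\ref{momentG}) with $p=3$) is exactly the point the paper takes for granted.
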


\subsection{Hayashi-Yoshida estimator of high-frequency covariance}
\label{covariancesec}
We assume here that $X_t$ is 2-dimensional and that $\xi_t = \rho_t \sigma_t^{(1)} \sigma_t^{(2)}$, where the high-frequency correlation $\rho_t$ satisfies $d \langle W^{(1)}, W^{(2)} \rangle_t = \rho_t dt$. The rate of convergence is $\kappa = 1/2$ in this problem too. We consider that observations are non-synchronous. In this framework and assuming that the price is continuous, \cite{hayashi2005covariance} bring forward the so-called Hayashi-Yoshida estimator and establish the consistency in case sampling times are independent from the price process. This is extended in an endogenous setting in \cite{hayashi2008consistent}. The related central limit theory can be found in \cite{hayashi2008asymptotic}, \cite{hayashi2011nonsynchronous} and \cite{potiron2017estimation}, where the latter work considers general endogenous arrival times. See also the remarkable work from \cite{bibinger2015estimating} and \cite{martin2019null} in a jumpy setting, and \cite{koike2014estimator}, \cite{koike2014limit} and \cite{koike2016estimation} which incorporates jumps, noise and some kind of endogeneity into the model.

\smallskip
As we want to allow for quite exotic endogenous models, we follow \cite{potiron2017estimation}. In particular, we assume no jumps in the setup. We describe the hitting boundary with time process (HBT) model introduced in the subsequent paper in what follows. In that model, eight stochastic processes (four of which are actually families of stochastic processes) are of interest, four for each asset. For the index $k=1,2$, we have the price process - $X_t^{(k)}$ - and three other stochastic processes (two of which are actually families of processes) - $Y_t^{(k)}$, $d_t^{(k)}(s)$ and $u_t^{(k)}(s)$ - related to the observation times of that process. Those four stochastic processes can be correlated, and we further assume that $(X_t, Y_t)$ is a 4-dimensional It\^{o}-process. For the process $k=1,2$,  $Y_t^{(k)}$ stands for the continuous observation time process which drives the observation times related to $X_t^{(k)}$.  The others four processes are the down processes $d_t^{(k)}(s)$ and the up processes $u_t^{(k)}(s)$.  We assume 
that the down process takes only negative values and that the up 
process takes only positive values. A new observation time will be generated whenever one of those two processes is hit by the increment of the observation time process. 
Then, the increment of the observation time process will be reset to $0$, and the next observation time will be produced whenever the up or the down process is hit again. Formally, if we let $\alpha > 0$ stand for the tick size, we define the first observation time as $t_0^{(k)} := 0$ and recursively $t_i^{(k)}$ as
\begin{eqnarray}
 \label{generateobstimes} t_i^{(k)} := \inf \Big\{ t > t_{i-1}^{(k)} : \Delta Y_{[t_{i-1}^{(k)}, t]}^{(k)} \notin \big[ \alpha d_{t}^{(k)} 
 \big( t - t_{i-1}^{(k)} \big), \alpha u_{t}^{(k)} \big(t - t_{i-1}^{(k)} \big) 
\big] \Big\},
\end{eqnarray}
where $\Delta Y_{[a,b]}^{(k)} := Y_b^{(k)} - Y_a^{(k)}$. We define the Hayashi-Yoshida estimator as
\begin{eqnarray}
\label{HY0} \widetilde{\Xi} := \sum_{0 < t_{i}^{(1)}\text{ , } t_{j}^{(2)} < T} 
\Delta_i X^{(1)} \Delta_j X^{(2)} 
\mathbf{1}_{ \big\{ [ t_{i-1}^{(1)}, t_{i}^{(1)} ) \cap [ t_{j-1}^{(2)}, 
t_{j}^{(2)} ) \neq \emptyset \big\} }.
\end{eqnarray}
In the asymptotic theory, we let $\alpha \rightarrow 0$. For the sake of Remark 5 (p. 25) in \cite{potiron2017estimation}, $\alpha^{-1}$ is of the same order as $n^{1/2}$. We can now show that the techniques of this paper hold in this case too.
\begin{theorem*} \label{covariance}
As the tick size $\alpha \rightarrow 0$, we have that 
\begin{eqnarray}
\label{proof201710301135} \alpha^{-1} \big(\widehat{\Xi} - \widetilde{\Xi} \big) \rightarrow^\proba 0.
\end{eqnarray}
 In particular, under the assumptions of \cite{potiron2017estimation}, there exist $AB$ and a process $AV_t$ such that 
 stably in law as the tick size $\alpha \rightarrow 0$, 
 \begin{eqnarray}
 \label{theorem}
 \alpha^{-1} (\widehat{\Xi} - \Xi ) \rightarrow AB + \int_0^T \left( 
 AV_s \right)^{1/2} d B_s ,
 \end{eqnarray}
 where $B_t$ is a Brownian motion independent of the other quantities, $AB$ and $AV_t$ are defined in Section 4.3 of \cite{potiron2017estimation}.
\end{theorem*}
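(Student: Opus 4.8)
The plan is to reduce the stable central limit theorem (\ref{theorem}) to the negligibility statement (\ref{proof201710301135}): once we establish $\alpha^{-1}(\widehat{\Xi} - \widetilde{\Xi}) \rightarrow^{\proba} 0$, the conclusion follows by adding this to the stable convergence of $\alpha^{-1}(\widetilde{\Xi} - \Xi)$ toward $AB + \int_0^T (AV_s)^{1/2} dB_s$, which holds under the assumptions of \cite{potiron2017estimation}, since adding a term that vanishes in probability preserves stable convergence in law. Thus essentially all the work lies in (\ref{proof201710301135}). First I would write $\widehat{X}_{t_i} = X_{t_i} + e_{t_i}$, where the plug-in error is $e_{t_i} = \phi(Q_{t_i}, \theta_0) - \phi(Q_{t_i}, \widehat{\theta})$; since $\phi$ is of class $C^3$ and its derivatives are bounded on the (bounded, by (\ref{infoass})) range of $Q$, a Taylor expansion in $\theta$ gives $\Delta_i e^{(k)} = -\psi_i^{(k)}(\theta_0)(\widehat{\theta}^{(k)} - \theta_0^{(k)}) + O_{\proba}((\widehat{\theta}^{(k)} - \theta_0^{(k)})^2)$, where $\psi_i^{(k)}(\theta)$ is the increment of $\partial_\theta \phi^{(k)}(Q^{(k)}, \theta)$. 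Since $\alpha^{-1}$ is of the order of $n^{1/2}$, component $k$ carries $N^{(k)} \sim \alpha^{-2}$ observations, so (\ref{thetahat}) yields the crucial rate $\widehat{\theta}^{(k)} - \theta_0^{(k)} = O_{\proba}(\alpha^2)$.

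Next I would expand the product in (\ref{HY0}) and decompose
\[
\widehat{\Xi} - \widetilde{\Xi} = A_1 + A_2 + A_3,
\]
where $A_1 = \sum_{i,j} \Delta_i X^{(1)} \Delta_j e^{(2)} \mathbf{1}_{\{\cdots\}}$, $A_2 = \sum_{i,j} \Delta_i e^{(1)} \Delta_j X^{(2)} \mathbf{1}_{\{\cdots\}}$ and $A_3 = \sum_{i,j} \Delta_i e^{(1)} \Delta_j e^{(2)} \mathbf{1}_{\{\cdots\}}$, the index ranges and indicators being those of the Hayashi-Yoshida sum. The pure-error term $A_3$, together with the quadratic Taylor remainders, is the easy part: each $\Delta_i e^{(k)} = O_{\proba}(\alpha^2)$, there are $O_{\proba}(\alpha^{-2})$ overlapping pairs $(i,j)$, hence $A_3 = O_{\proba}(\alpha^{-2}\cdot\alpha^4) = O_{\proba}(\alpha^2)$ and $\alpha^{-1}A_3 = O_{\proba}(\alpha) \rightarrow^{\proba} 0$; the same bookkeeping disposes of the remainder terms.

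The heart of the matter is the two mixed terms. Factoring out the global scalar $\widehat{\theta}^{(1)} - \theta_0^{(1)}$, the leading contribution of $A_2$ is $-(\widehat{\theta}^{(1)} - \theta_0^{(1)}) S_2$ with $S_2 = \sum_{i,j} \psi_i^{(1)}(\theta_0) \Delta_j X^{(2)} \mathbf{1}_{\{\cdots\}}$, so $\alpha^{-1}A_2 = O_{\proba}(\alpha)\, S_2$ and it suffices to prove $S_2 = o_{\proba}(\alpha^{-1})$. The naive term-by-term bound is only $O_{\proba}(\alpha^{-1})$ — marginally too weak — since each $\Delta_j X^{(2)} = O_{\proba}(\alpha)$, each $\psi_i^{(1)} = O_{\proba}(1)$, and there are $O_{\proba}(\alpha^{-2})$ terms. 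The gain comes from the assumption $\esp[\psi_i^{(1)}(\theta) \mid X^{(2)}] = 0$: grouping the overlapping $i$'s, write $S_2 = \sum_j \Psi_j \Delta_j X^{(2)}$ with $\Psi_j = \sum_{i \text{ overlapping } j} \psi_i^{(1)}(\theta_0)$, condition on the path $X^{(2)}$ (which freezes $\Delta_j X^{(2)}$), and estimate $\esp[S_2^2 \mid X^{(2)}]$. The conditional mean-zero property makes the summands conditionally centered, so the diagonal dominates and heuristically $\esp[S_2^2 \mid X^{(2)}] \lesssim \sum_j (\Delta_j X^{(2)})^2\, \esp[\Psi_j^2 \mid X^{(2)}] = O_{\proba}(1)$, using that the realized variance $\sum_j (\Delta_j X^{(2)})^2$ converges to $\int_0^T (\sigma^{(2)}_s)^2 ds$ and $\esp[\Psi_j^2 \mid X^{(2)}] = O_{\proba}(1)$. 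Hence $S_2 = O_{\proba}(1) = o_{\proba}(\alpha^{-1})$ and $\alpha^{-1} A_2 \rightarrow^{\proba} 0$; the term $A_1$ is treated by the symmetric argument with the roles of the two components exchanged.

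The main obstacle is making this conditional-variance computation rigorous in the endogenous, asynchronous HBT setting of (\ref{generateobstimes}). The difficulty is twofold: the observation times $t_i^{(1)}$ are correlated with $X^{(1)}$ and enter the overlap indicators jointly with the $t_j^{(2)}$, so conditioning on $X^{(2)}$ does not freeze the index set of $S_2$; and one must verify that the off-diagonal covariances $\esp[\Psi_j \Psi_{j'} \mid X^{(2)}]$ for $j \neq j'$ either vanish or are small enough to be absorbed, which requires controlling how a single $\psi_i^{(1)}$ can feed several consecutive $\Psi_j$ through overlapping Hayashi-Yoshida intervals. I expect this to be where the hypothesis $\esp[\psi_i^{(1)}(\theta) \mid X^{(2)}] = 0$ must be exploited most carefully, together with the moment and boundedness controls of \cite{potiron2017estimation} and the fact that the rescaled sampling intervals are locally bounded and bounded away from zero, in order to convert the heuristic diagonal bound into a genuine Lenglart/conditional-Chebyshev argument.
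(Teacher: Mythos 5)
Your overall architecture is sound and matches the paper's: reduce (\ref{theorem}) to the negligibility statement (\ref{proof201710301135}) plus Theorem 1 of \cite{potiron2017estimation}, exploit $\alpha^{-1}\sim n^{1/2}$ and $\widehat{\theta}-\theta_0=O_\proba(\alpha^2)$, and dispose of the error-times-error term (your $A_3$, the paper's $I$) by brute-force counting. But there are two genuine gaps. First, the claimed symmetry between $A_1$ and $A_2$ is unavailable: the theorem assumes only $\esp[\psi_i^{(1)}(\theta)\mid X^{(2)}]=0$, and there is no mirror hypothesis $\esp[\psi_j^{(2)}(\theta)\mid X^{(1)}]=0$, so your conditional-centering argument cannot be run with the two components exchanged. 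The paper handles the term price$(1)\times$error$(2)$ (its term $II$) by an entirely different mechanism: after reducing without loss of generality to zero drift, the increments $\Delta_i X^{(1)}$ are martingale increments and the proof of Theorem 2 in \cite{li2016efficient} goes through, with one measurability fix (the $\phi^{(2)}$-error evaluated at $t_i^+$ and $t_{i-1}^-$ is not $\calf_{t_i}$-measurable, so it must be bounded before applying the Burkh\"{o}lder-Davis-Gundy inequality).

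Second, even for $A_2$ your conditional-variance computation is, as you yourself flag, only a heuristic, and the missing ingredient is precisely what the paper supplies. Conditional mean zero of each individual $\psi_i^{(1)}$ given $X^{(2)}$ neither makes the partial sums of $\Psi_j\,\Delta_j X^{(2)}$ a martingale nor kills the off-diagonal covariances $\esp[\Psi_j\Psi_{j'}\mid X^{(2)}]$; in the endogenous HBT setting the sampling times and overlap indicators remain random after conditioning. The paper resolves this by splitting the second-asset increment $X^{(2)}_{t_i^+}-X^{(2)}_{t_{i-1}^-}$ into three pieces with distinct measurability structure: the forward piece $X^{(2)}_{t_i^+}-X^{(2)}_{t_i^{(1)}}$ is made adapted by resampling along the times $t_i^{1C}$ of (\ref{algo1C}), after which the summands $M_i(\theta)$ are genuine martingale increments (their zero conditional mean comes from the martingale property of $X^{(2)}$ under the zero-drift reduction, not from the hypothesis on $\psi^{(1)}$), and Lemma 2.2.11 of \cite{jacod2011discretization} applies; the middle piece $X^{(2)}_{t_i^{(1)}}-X^{(2)}_{t_{i-1}^{(1)}}$ again follows \cite{li2016efficient}; and only the backward piece $X^{(2)}_{t_{i-1}^{(1)}}-X^{(2)}_{t_{i-1}^-}$ uses the assumption $\esp[\psi_i^{(1)}(\theta)\mid X^{(2)}]=0$, since that increment is already known at time $t_{i-1}^{(1)}$ and conditioning on the path of $X^{(2)}$ is what centers the sum. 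Finally, the paper establishes all of this uniformly over $\|\theta-\theta_0^{(1)}\|<K/n$ via Sobolev's inequality, which is what legitimately replaces your factoring-out of the scalar $\widehat{\theta}^{(1)}-\theta_0^{(1)}$ (valid only for the linearized term). Without the three-way split and the $t^{1C}$ device, your diagonal-dominance bound has no rigorous backing.
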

We define $\widetilde{AB}$ and $\widetilde{AVAR}$ following respectively (46) and (47) in \cite{potiron2017estimation} (Section 5, p. 28). Note that $\widetilde{AB}$ and $\widetilde{AVAR}$ are already of the right asymptotic order in the sense that $\alpha^{-1} \widetilde{AB} \rightarrow^\proba AB$ and $\alpha^{-2} \widetilde{AVAR} \rightarrow^\proba \int_0^T  
 AV_s  ds$ (see (48) and (49) in Corollary 4 of the cited paper). We provide now the studentized version of (\ref{theorem}).

\begin{corollary*}
\label{covariancestudent}
We have 
\begin{eqnarray}
\label{covariancestudent0} \frac{ \widehat{\Xi} - \widehat{AB} - \Xi}{\sqrt{\widehat{AVAR}}} & \rightarrow & \mathcal{N} (0,1).
\end{eqnarray}
\end{corollary*}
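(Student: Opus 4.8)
The plan is to deduce the studentized limit from Theorem \ref{covariance} by a Slutsky-type argument, once I establish that the plug-in bias and variance estimators have the correct probability limits after rescaling. Writing $AVAR := \int_0^T AV_s\, ds$ for the (random) conditional variance of the mixed-normal limit appearing in (\ref{theorem}), the first step is to rewrite the studentized statistic in terms of the scaled quantities,
\begin{eqnarray*}
\frac{\widehat{\Xi} - \widehat{AB} - \Xi}{\sqrt{\widehat{AVAR}}} & = & \frac{\alpha^{-1}\big(\widehat{\Xi} - \Xi\big) - \alpha^{-1}\widehat{AB}}{\sqrt{\alpha^{-2}\widehat{AVAR}}},
\end{eqnarray*}
so that the powers of $\alpha$ cancel and the only ingredients needed are the stable convergence of $\alpha^{-1}(\widehat{\Xi}-\Xi)$, already granted by Theorem \ref{covariance}, together with $\alpha^{-1}\widehat{AB} \rightarrow^\proba AB$ and $\alpha^{-2}\widehat{AVAR} \rightarrow^\proba AVAR$.

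The second step is to prove these two convergences. By the cited results of \cite{potiron2017estimation} we already have $\alpha^{-1}\widetilde{AB} \rightarrow^\proba AB$ and $\alpha^{-2}\widetilde{AVAR} \rightarrow^\proba AVAR$, so it suffices to show that the plug-in replacement is asymptotically negligible, namely $\alpha^{-1}(\widehat{AB} - \widetilde{AB}) \rightarrow^\proba 0$ and $\alpha^{-2}(\widehat{AVAR} - \widetilde{AVAR}) \rightarrow^\proba 0$. This parallels the negligibility (\ref{proof201710301135}) established in Theorem \ref{covariance} for $\widehat{\Xi} - \widetilde{\Xi}$, and I would prove it by the same mechanism: expand $\widehat{X}_{t_i} - X_{t_i} = \phi(Q_{t_i},\theta_0) - \phi(Q_{t_i},\widehat{\theta})$ by Taylor's formula in $\theta$ around $\theta_0$, bound the remainder using that $\phi$ is $C^3$, that $\max_{i,j,k}\big|Q^{(k,j)}_{t_i^{(k)}}\big| = O_\proba(1)$ by (\ref{infoass}), and that $N(\widehat{\theta}-\theta_0) = O_\proba(1)$ by (\ref{thetahat}). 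The per-increment replacement error $\Delta_i\widehat{X} - \Delta_i X$ is then $O_\proba(N^{-1})$ uniformly in $i$, and substituting it into the block sums defining $\widetilde{AB}$ and $\widetilde{AVAR}$ produces only lower-order contributions, except for the leading cross term between the noise-derivative increments and the efficient-price increments, which has conditional mean zero by the exogeneity assumption $\esp[\psi_i^{(1)}(\theta)\mid X^{(2)}] = 0$, exactly as in the proof of Theorem \ref{covariance}.

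Finally I would combine the pieces. By Theorem \ref{covariance}, $\alpha^{-1}(\widehat{\Xi}-\Xi) \rightarrow AB + \int_0^T (AV_s)^{1/2}dB_s$ stably in law; subtracting $\alpha^{-1}\widehat{AB} \rightarrow^\proba AB$ cancels the bias and yields, stably in law, the convergence $\alpha^{-1}(\widehat{\Xi}-\Xi) - \alpha^{-1}\widehat{AB} \rightarrow \int_0^T (AV_s)^{1/2}dB_s$. Since $\alpha^{-2}\widehat{AVAR} \rightarrow^\proba AVAR$ with $AVAR > 0$, the denominator converges in probability to $\sqrt{AVAR}$, and the standard studentization lemma for stable convergence (see \cite{jacod2011discretization}) applies: because $B$ is a Brownian motion independent of $\mathbf{F}$ while $AV_s$ is $\mathbf{F}$-measurable, the limit $\int_0^T (AV_s)^{1/2}dB_s$ is, conditionally on $\mathbf{F}$, centered Gaussian with variance $AVAR$, so dividing it by $\sqrt{AVAR}$ produces a standard normal. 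Hence the ratio converges to $\mathcal{N}(0,1)$, which is (\ref{covariancestudent0}).

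The hard part will be the negligibility of the plug-in replacement inside $\widehat{AVAR}$ in the second step: since $\widetilde{AVAR}$ aggregates fourth-order interactions of increments across the two non-synchronous grids $\{t_i^{(1)}\}$ and $\{t_j^{(2)}\}$, one must verify that every term mixing $\Delta\widehat{X}-\Delta X$ with genuine price increments is controlled at order $o_\proba(\alpha^2)$. A crude bound only gives $O_\proba(1)$ after the $\alpha^{-2}$ scaling, so the argument genuinely needs the exogeneity condition to turn the surviving first-order cross sum into a conditionally centered sum whose standard deviation is of smaller order; combining this with the uniform $O_\proba(N^{-1})$ replacement bound and the HBT sampling-rate relation $\alpha^{-1}\sim n^{1/2}$ is what upgrades $O_\proba$ to $o_\proba$. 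The analogous step for $\widehat{AB}$ is easier, as it involves only third-order increments.
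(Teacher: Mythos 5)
Your proposal is correct and follows essentially the same route as the paper: the paper's proof is a two-line pointer stating that the argument runs as in Corollary \ref{volatilitystudent} (consistency of $\widehat{AB}$ and $\widehat{AVAR}$ after rescaling, obtained by showing the parametric-noise correction is negligible) combined with the estimates of \cite{potiron2017estimation}, which is exactly your decomposition into $\alpha^{-1}(\widehat{AB}-\widetilde{AB})\to^\proba 0$, $\alpha^{-2}(\widehat{AVAR}-\widetilde{AVAR})\to^\proba 0$ plus the convergences (48)--(49) of that reference, followed by the standard studentization lemma for stable convergence. Your Taylor-expansion bound with $|\psi_i(\widehat{\theta})|\leq C/N$, the use of the exogeneity condition $\esp[\psi_i^{(1)}(\theta)\mid X^{(2)}]=0$ for the surviving cross terms, and the relation $\alpha^{-1}\sim n^{1/2}$ are precisely the ingredients the paper invokes implicitly.
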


\subsection{Functionals of volatility local estimator}
\label{functionalssec}
The spot parameter is $\xi_t = g(c_t)$ for a given smooth function $g$ on $\calm_d^+$, the set of all non-negative symmetric $d\times d$ matrices.
The problem was initiated by \cite{barndorff2002econometric}. See also \cite{barndorff2006central}, \cite{mykland2012econometrics} (Proposition 2.17, p. 138) and \cite{renault2017efficient} for related developments. Here, the rate of convergence is $\kappa = 1/2$ again.

\smallskip
Local estimation (\cite{mykland2009inference}, Section 4.1, p. 1421-1426) can make
the mentioned estimators efficient. \cite{jacod2013quarticity} extended the method in several ways. To do that, they first propose an estimator of the spot volatility  $\widetilde{c}_i$, and then take a Riemann sum of $g(\widetilde{c}_i)$. 

\smallskip
For any matrix $a \in \calm_d^+$, the related $a^{ij}$ stands for the $(i,j)$-component of $a$. Moreover, for $b \in \reels$, $[b]$ stands for the floor of $b$. Several results are of interest in \cite{jacod2013quarticity}. In its most useful form (from our point of view), the estimator takes on the form
\bea 
\widetilde{\Xi} = \Delta \sum_{i=1}^{[T/\Delta]-k+1}\l\{g(\widetilde{c}_{i}) - \inv{2k}\sum_{j,q,l,m=1}^d \partial_{jq,lm}^2g(\widetilde{c}_{i})\l(\widetilde{c}_{i}^{jl}\widetilde{c}_{i}^{qm}+\widetilde{c}_{i}^{jm}\widetilde{c}_{i}^{ql}\r)\r\},
\label{eqGEstimator}
\eea 
with 
$$\widetilde{c}_{i}^{lm}=\inv{k \Delta} \sum_{j=0}^{k-1}\Delta_{i+j} X^l\Delta_{i+j} X^m \mathbf{1}_{\{\|\Delta_{i+j} X\| \leq w\}}, $$
for two sequences of integers $k$ and $w = \alpha \Delta^{\bar{\omega}}$ for some $\alpha >0$, and 
$$\frac{2p-1}{2(2p-r)}\leq\bar{\omega}<\half,$$ 
where we suppose that 
\bea 
\|\partial^jg(x)\| \leq K(1+\|x\|^{p-j}),\textnormal{  } j=0,1,2,3
\label{momentG}
\eea 
for some constants $p \geq 3$, $K>0$.
In Equation (\ref{eqGEstimator}), $\widetilde{c}_{i}$ corresponds to an estimator of the spot volatility matrix, the first term is part of the Riemann sum, while the second term is required to remove the asymptotic bias of the first term in $\widetilde{\Xi}$, which explodes asymptotically. We show that the associated plug-in estimator $\widehat{\Xi}$ enjoys the same limit theory as $\widetilde{\Xi}$. More precisely, we have the following result.

\begin{theorem*}\label{theoremG}
Assume that $k^2\Delta \to0$, $k^3\Delta \to\infty$. Let $\widetilde{\Xi}'$ be the estimator defined as in (\ref{eqGEstimator}) where $X_t$ is replaced by its continuous part $X_t'$. Then, we have the convergence
\begin{eqnarray}
n^{1/2}\l(\widehat{\Xi} - \widetilde{\Xi}'\r) \to^\proba 0.
\end{eqnarray}
Moreover, stably in law, we have the convergence
\begin{eqnarray}
 n^{1/2}\l(\widehat{\Xi} -  \Xi \r) \rightarrow \int_0^T{\sqrt{T\overline{h}(c_s)}dB_s},
\end{eqnarray}
where for $x \in \calm_d^+$,
$$ \overline{h}(x) = \sum_{j,q,l,m=1}^d \partial_{jq}g(x)\partial_{lm}g(x)(x^{jl}x^{qm}+x^{jm}x^{ql}),$$
and where $B$ is a standard Brownian motion independent of the other quantities. 
\end{theorem*}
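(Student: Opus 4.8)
The plan is to prove the first convergence $n^{1/2}(\widehat\Xi-\widetilde\Xi')\rightarrow^{\proba}0$ by a perturbation analysis of the plug-in price around the true efficient price, and then to obtain the stable central limit theorem by combining this with the limit theory of \cite{jacod2013quarticity} for the continuous price and the fact that stable convergence is preserved under addition of a term that vanishes in probability after $n^{1/2}$-scaling. Write the plug-in error as $\widehat X_{t_i}=X_{t_i}+e_{t_i}$ with $e_{t_i}=\phi(Q_{t_i},\theta_0)-\phi(Q_{t_i},\widehat\theta)$. Using the regularity of $\phi$, a Taylor expansion around $\theta_0$ and the boundedness (\ref{infoass}), together with the fast rate (\ref{thetahat}) (recall $N=n$ here), I get $e_{t_i}=-\partial_\theta\phi(Q_{t_i},\theta_0)\,\eta+O_{\proba}(n^{-2})$ uniformly in $i$, where $\eta:=\widehat\theta-\theta_0=O_{\proba}(n^{-1})$, and hence $\Delta_ie=-\{\partial_\theta\phi(Q_{t_i},\theta_0)-\partial_\theta\phi(Q_{t_{i-1}},\theta_0)\}\eta+O_{\proba}(n^{-2})=O_{\proba}(n^{-1})$ uniformly. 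The essential point underlying the whole argument is that this perturbation is much smaller than a Brownian increment $\Delta_iX'=O_{\proba}(n^{-1/2})$ and negligible compared with the truncation level $w=\alpha\Delta^{\bar\omega}$, since $\bar\omega<1/2$.

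Next I would compare the spot estimators. With $\widehat X=X'+J+e$, where $J$ is the jump part, I expand the product $\Delta_{i+j}\widehat X^l\Delta_{i+j}\widehat X^m$ and group the terms. The contributions involving $J$ are removed by the threshold exactly as in \cite{jacod2013quarticity}: the choice $\bar\omega\ge(2p-1)/(2(2p-r))$ together with the growth condition (\ref{momentG}) yields $n^{1/2}(\widetilde\Xi-\widetilde\Xi')\rightarrow^{\proba}0$, so it suffices to compare the plug-in estimator built from $X'+e$ to $\widetilde\Xi'$. This leaves, besides the base product $\Delta_{i+j}{X'}^{l}\Delta_{i+j}{X'}^{m}$, the two cross terms $\Delta_{i+j}{X'}^{l}\Delta_{i+j}e^{m}$, $\Delta_{i+j}e^{l}\Delta_{i+j}{X'}^{m}$ and the pure error term $\Delta_{i+j}e^{l}\Delta_{i+j}e^{m}$. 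One also has to control the truncation mismatch $\mathbf1_{\{\|\Delta_{i+j}\widehat X\|\le w\}}-\mathbf1_{\{\|\Delta_{i+j}X'\|\le w\}}$: since $\Delta e=O_{\proba}(n^{-1})\ll w$, an indicator can flip only when $\|\Delta_{i+j}X'\|$ falls in a shell of width $O(n^{-1})$ around $w$, whose contribution is controlled by the standard density and large-deviation estimates for continuous increments near the threshold. Consequently the leading part of $(\widehat c_i-\widetilde c_i')^{lm}$ is $\tfrac1{k\Delta}\sum_{j=0}^{k-1}\{\Delta_{i+j}{X'}^{l}\Delta_{i+j}e^{m}+\Delta_{i+j}e^{l}\Delta_{i+j}{X'}^{m}\}\mathbf1_{\{\|\Delta_{i+j}X'\|\le w\}}$, the remaining terms being $o_{\proba}(n^{-1/2})$ after propagation.

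The key step is to propagate these into $\widehat\Xi-\widetilde\Xi'$. A first-order expansion of $g$ gives the dominant term $\Delta\sum_i\sum_{lm}\partial_{lm}g(\widetilde c_i')\,(\widehat c_i-\widetilde c_i')^{lm}$, and inserting the cross term the factor $\Delta$ cancels $1/(k\Delta)$. Substituting $\Delta_{i+j}e=-D_{i+j}\eta+O_{\proba}(n^{-2})$ with $D_m:=\partial_\theta\phi(Q_{t_m},\theta_0)-\partial_\theta\phi(Q_{t_{m-1}},\theta_0)=O_{\proba}(1)$, I factor the common random scalar $\eta$ out of the whole double sum and reorganize it by Fubini over the overlapping windows: each increment $\Delta_mX'$ belongs to exactly $k$ windows, so the double sum collapses to $\sum_m D_m\Delta_mX'\cdot\frac1k\sum_{i:\,m\in\text{window}}\partial_{\cdot m}g(\widetilde c_i')$. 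The local average of $\partial g(\widetilde c_i')$ equals, up to an error absorbed by the $\eta$-prefactor, $\partial g(c_{t_m})$, so the whole expression is $\eta$ times a single sum $\sum_m a_m\Delta_mX'$ with $a_m=O_{\proba}(1)$. Because $W$ is a Brownian motion under $\calh_t=\calf_t\vee\sigma\{Q_{t_i}\}$, this sum (after replacing the endpoint-evaluated coefficients by their left limits, with negligible error) is a martingale transform conditionally on the limit order book, so a second-moment computation yields $\sum_m a_m\Delta_mX'=O_{\proba}(1)$; multiplying by $\eta=O_{\proba}(n^{-1})$ gives an $O_{\proba}(n^{-1})$ contribution, i.e. $o_{\proba}(n^{-1/2})$. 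The pure error term contributes $\eta^2$ times an $O_{\proba}(n)$ sum, again $O_{\proba}(n^{-1})$; the second-order Taylor term and the difference of the de-biasing terms carry an extra factor $1/k$ or $(\widehat c-\widetilde c)^2=O_{\proba}(n^{-1}k^{-1})$ and are therefore also $o_{\proba}(n^{-1/2})$. This establishes $n^{1/2}(\widehat\Xi-\widetilde\Xi')\rightarrow^{\proba}0$.

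Finally, the continuous-price estimator obeys the central limit theorem of \cite{jacod2013quarticity}, namely $n^{1/2}(\widetilde\Xi'-\Xi)\rightarrow\int_0^T\sqrt{T\overline h(c_s)}\,dB_s$ stably in law under $k^2\Delta\to0$ and $k^3\Delta\to\infty$; adding the $o_{\proba}(1)$ term $n^{1/2}(\widehat\Xi-\widetilde\Xi')$ and using that stable convergence is stable under such perturbations yields the announced limit for $n^{1/2}(\widehat\Xi-\Xi)$. The main obstacle is the cross term treated in the third paragraph: since $\eta$ is a single random factor common to every window, a crude bound on $\sum_m a_m\Delta_mX'$ would only give $O_{\proba}(n^{1/2})$ and hence a non-vanishing $O_{\proba}(1)$ contribution after scaling; it is precisely the conditional martingale cancellation granted by the assumption that $W$ remains Brownian under $\calh$, combined with the fast rate (\ref{thetahat}), that upgrades this to $O_{\proba}(1)$ and makes the plug-in correction asymptotically negligible.
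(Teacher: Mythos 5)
Your proposal is correct and takes essentially the same route as the paper's proof: jump removal via the threshold estimates perturbed by the $O(n^{-1})$ error $\psi_i(\widehat{\theta})$ (absorbed as a drift-type term under an enlarged filtration), the indicator-swap estimate comparing $\mathbf{1}_{\{|\Delta_i \widehat{X}|\leq w\}}$ with $\mathbf{1}_{\{|\Delta_i X|\leq w\}}$, and the dominant cross term handled by factoring out $\widehat{\theta}-\theta_0$, replacing $\partial g(\widetilde{c}_i)$ by the spot value $\partial g(c_{t_i})$, reindexing the overlapping windows, and using the martingale structure under $\calh_t=\calf_t\vee\sigma\{Q_{t_i}\}$ with a second-moment bound — precisely the paper's lemmas on jump removal, indicator replacement, and its term $I$, with the remainder terms and the de-biasing correction dispatched by the same crude bounds. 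The only cosmetic difference is that you Taylor-expand $g$ in the spot-volatility argument, whereas the paper applies the mean value theorem to $\theta\mapsto g(c_i(\theta))$; the resulting leading term and error estimates coincide.
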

In particular, note that the asymptotic variance in the stable convergence can be expressed as
\beas 
AVAR = T\int_0^T{\overline{h}(c_s)ds},
\eeas 
so that we naturally define the asymptotic variance estimator as 
\beas 
\widehat{AVAR} =  T \Delta \sum_{i=1}^{[t/\Delta]-k+1} \overline{h}(\widehat{c}_{i}).
\eeas  

We easily deduce from Corollary 3.7 p. 1471 in \cite{jacod2013quarticity} the following studentized version of the above central limit theorem. 
\begin{corollary*}\label{corolStudentG}
Under the assumptions of the previous theorem, we have the stable convergence in law
\beas 
\frac{n^{1/2}\big(\widehat{\Xi} -  \Xi \big)}{\sqrt{\widehat{AVAR}}} \to \caln(0,1).
\eeas 
\end{corollary*}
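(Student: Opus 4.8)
The plan is to deduce the studentized limit from two facts: the stable central limit theorem for the plug-in estimator already established in Theorem \ref{theoremG}, and the consistency of the variance estimator, $\widehat{AVAR} \to^\proba AVAR$, where $AVAR = T\int_0^T \overline{h}(c_s)\,ds$ is the ($\mathcal{F}$-measurable, a.s.\ positive) conditional variance of the limit $\int_0^T \sqrt{T\overline{h}(c_s)}\,dB_s$. Granting both, the conclusion is the usual consequence of the stability of the convergence: since $n^{1/2}(\widehat{\Xi}-\Xi)$ converges $\mathcal{F}$-stably to a limit that is, conditionally on $\mathcal{F}$, centered Gaussian with variance $AVAR$, and since $\widehat{AVAR}\to^\proba AVAR$, the pair converges jointly and the ratio $n^{1/2}(\widehat{\Xi}-\Xi)/\sqrt{\widehat{AVAR}}$ converges in law to $\int_0^T \sqrt{T\overline{h}(c_s)}\,dB_s / \sqrt{AVAR}$, which is exactly $\mathcal{N}(0,1)$ and independent of $\mathcal{F}$. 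Thus the whole matter reduces to proving $\widehat{AVAR}\to^\proba AVAR$.

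In the non-noisy case this consistency is precisely what underlies Corollary 3.7 (p.\ 1471) of \cite{jacod2013quarticity}: the statistic $T\Delta\sum_i \overline{h}(\widetilde{c}_i)$, built from the true increments, converges in probability to $AVAR$, and the same holds when the spot estimators are formed from the continuous part $X'$ rather than $X$ (the two agree asymptotically once small jumps are truncated). I would therefore only need to transfer this to the plug-in version, i.e.\ to show that $\widehat{AVAR} - T\Delta\sum_i \overline{h}(\widetilde{c}_i') \to^\proba 0$, where $\widetilde{c}_i'$ is the truncated spot estimator based on $X'$.

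For this transfer I would reuse, almost verbatim, the argument behind the first convergence in Theorem \ref{theoremG}. The plug-in and true spot estimators differ only through $\widehat{X}_{t_i} - X_{t_i} = \phi(Q_{t_i},\theta_0) - \phi(Q_{t_i},\widehat{\theta})$; a Taylor expansion of the $C^3$ impact function $\phi$, combined with the boundedness \eqref{infoass} of the order-book variables and the fast rate \eqref{thetahat}, shows this difference is $O_\proba(N^{-1})$ uniformly in $i$, hence negligible against the $O_\proba(\Delta^{1/2})$ scale of the genuine price increments that drive $\widetilde{c}_i'$. Feeding this perturbation through the locally Lipschitz map $\overline{h}$ and summing over the $O(\Delta^{-1})$ blocks yields $\widehat{AVAR} - T\Delta\sum_i \overline{h}(\widetilde{c}_i') \to^\proba 0$, which together with the previous paragraph gives $\widehat{AVAR}\to^\proba AVAR$.

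The one point requiring genuine care is the uniform propagation of the $O_\proba(N^{-1})$ price perturbation through $\overline{h}$, because $\overline{h}$ only has polynomial growth (of order $\|x\|^{2p}$, inherited from the bound \eqref{momentG} on the derivatives of $g$) and the spot estimators $\widehat{c}_i$ are merely $O_\proba(1)$, not bounded, on exceptional sets. Controlling the product of the large $\overline{h}$-factors with the small perturbation, uniformly over the blocks and after summation, is exactly the kind of moment-and-localization estimate already performed for Theorem \ref{theoremG}; once it is in place the studentized convergence follows with no further work.
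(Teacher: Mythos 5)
Your proposal is correct and takes essentially the same route as the paper: reduce via stability of the convergence and Slutsky's lemma to the consistency $\widehat{AVAR} \to^\proba AVAR$, then establish it by re-running the estimates of Theorem \ref{theoremG} with $\overline{h}$ in place of $g$ and invoking Corollary 3.7 of \cite{jacod2013quarticity} in lieu of Theorem 3.2. The paper merely adds the explicit observation that, since $\widehat{AVAR}$ carries no bias-correction term and no $n^{1/2}$ scaling, the single use of the $C^3$ property of $g$ (in Lemma \ref{lemmaH}) is not needed, so $\overline{h}$ being only of class $C^2$ suffices --- a point your moment-and-localization discussion implicitly accommodates.
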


\begin{remark*} \textnormal{(estimation of functionals of volatility under i.i.d noise)
Under i.i.d noise, no result with a general function $g(c_t)$ is available. Alternative approaches include: \cite{jacod2010limit} for even power, \cite{mancino2012estimation} and also \cite{andersen2014robust} in the special case of quarticity, and also \cite{altmeyer2015functional} when considering the tricity. See also the work from \cite{potiron2016local} (Section 4.2) for a local maximum-likelihood estimation with noise variance vanishing asymptotically. }
\end{remark*}

\subsection{Volatility of volatility}\label{sectionVolOfVol}
In this section we assume that $X_t$ is 1-dimensional and we are interested in the spot parameter $\xi_t = \widetilde{\sigma}_t^2$ which corresponds to the so-called volatility of volatility process defined in (\ref{volatilityDefinition}). As far as we know, there is no result in the literature including noise into the model, but in the non-noisy scenario one can consult \cite{vetter2015estimation} (Theorem 2.5 and Theorem 2.6) and \cite{mykland2012efficient} (Theorem 7 and Corollary 2). We follow here the former author, and aim to show the robustness of Theorem 2.6 when using plug-in estimators. Accordingly, we hereafter assume that both $X_t$ and $c_t$ are continuous processes, i.e. $\delta = \widetilde{\delta} = 0$ in (\ref{efficientPrice})-(\ref{volatilityDefinition}). To our knowledge, the case with jumps in $X_t$ and/or $c_t$ remains an open question. The rate of convergence is $\kappa = 1/4$. Introducing the spot volatility estimator\footnote{Note that the definition of $\widetilde{c}_{i}$ slightly diverges from the previous section.} for $i \in \{0,\cdots,n-k \}$,
\beas
\widetilde{c}_{i} := \frac{n}{k}\sum_{j=1}^{k}\l(\Delta_{i+j}X\r)^2, 
\eeas
and the spot quarticity estimator
\beas 
\widetilde{q}_i := \frac{n^2}{3k}\sum_{j=1}^{k}\l(\Delta_{i+j}X\r)^4,
\eeas 
the author defines the volatility of volatility estimator (see (2.5) on p. 2399 in the cited work) as 
\beas
\widetilde{\Xi} := \sum_{i=0}^{[t/\Delta] - 2k}\l\{\frac{3}{2k}\l(\widetilde{c}_{i+k} - \widetilde{c}_{i}\r)^2 - \frac{6}{k^2}\widetilde{q}_i\r\}. 
\eeas 
Letting $\widehat{c}_i$, $\widehat{q}_i$, and $\widehat{\Xi}$ be the corresponding plug-in estimators, we obtain the following results.

\begin{theorem*} \label{thmVolofVol}
Assume that $k = cn^{1/2}+o(n^{1/4})$ for some $c>0$. Then stably in law,
\beas
\sqrt{\frac{n}{k}}\l( \widehat{\Xi} - \Xi \r) \to \sqrt{T}\int_0^T \alpha_sdB_s,
\eeas
where $B_t$ is a Brownian motion independent from the other quantities and
\beas 
\alpha_s^2 = \frac{48}{c^4}\sigma_s^8 + \frac{12}{c^2}\sigma_s^4\widetilde{\sigma}_s^2 + \frac{151}{70}\widetilde{\sigma}_s^4.
\eeas 
\end{theorem*}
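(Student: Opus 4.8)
The plan is to follow the two-step scheme already used for the threshold bipower variation and the functionals-of-volatility estimator: first establish that replacing the true increments by the estimated ones is negligible at the rate $\sqrt{n/k}$, i.e.
\begin{eqnarray*}
\sqrt{\frac{n}{k}}\,\big(\widehat{\Xi}-\widetilde{\Xi}\big)\;\rightarrow^\proba\;0,
\end{eqnarray*}
and then invoke Theorem 2.6 of \cite{vetter2015estimation}, which gives the stable convergence of $\sqrt{n/k}\,(\widetilde{\Xi}-\Xi)$ toward $\sqrt{T}\int_0^T\alpha_s dB_s$ with the announced $\alpha_s$. Since stable convergence in law is preserved under the addition of a term converging to zero in probability, the two statements then merge into the claim. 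Because $k=cn^{1/2}+o(n^{1/4})$ yields $\sqrt{n/k}\asymp n^{1/4}$, the entire task reduces to proving $\widehat{\Xi}-\widetilde{\Xi}=o_\proba(n^{-1/4})$.

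To do so I would first linearise the plug-in error. Setting $e_{t_i}:=\phi(Q_{t_i},\theta_0)-\phi(Q_{t_i},\widehat{\theta})$ so that $\widehat{X}_{t_i}=X_{t_i}+e_{t_i}$, a mean-value expansion in $\eta:=\widehat{\theta}-\theta_0$ gives, for each increment,
\begin{eqnarray*}
\Delta_{i+j}e\;=\;-\big[\Delta_{i+j}\big(\partial_\theta\phi(\cdot,\theta_0)\big)\big]^{T}\eta\;+\;R_{i+j},
\end{eqnarray*}
where, using (\ref{infoass}), the compactness of $\Theta$ and the regularity of $\phi$ in $\theta$, the first-order multiplier is $O_\proba(1)$ and the remainder obeys $R_{i+j}=O_\proba(|\eta|^2)$. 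Since (\ref{thetahat}) forces $\eta=O_\proba(1/n)$ (with $N=n$ under regular observations), each $\Delta_{i+j}e$ is $O_\proba(1/n)$, one order below $\Delta_{i+j}X=O_\proba(n^{-1/2})$. I would then insert $\Delta_{i+j}\widehat{X}=\Delta_{i+j}X+\Delta_{i+j}e$ into $\widehat{c}_i-\widetilde{c}_i$ and $\widehat{q}_i-\widetilde{q}_i$, expand the squares and fourth powers, and sort every resulting contribution to $\widehat{\Xi}-\widetilde{\Xi}$ by its total degree in $\eta$. The terms linear in $\eta$ dominate; those of degree two or three carry an extra factor $1/n$ each and are disposed of by crude counting.

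The main obstacle is that, by naive counting, the linear-in-$\eta$ contributions are only $O_\proba(n^{-1/4})$, whereas I need $o_\proba(n^{-1/4})$; reaching this forces me to exploit cancellations rather than absolute sizes. The key structural fact is that, once the global vector $\eta$ has been factored out, the surviving multipliers $\Delta_{i+j}(\partial_\theta\phi(\cdot,\theta_0))$ depend on the $Q$'s alone and are therefore $\calh_{t_{i+j-1}}$-measurable, since $\calh_t$ contains the whole of $\sigma\{Q_{t_i}\}$ and $W$ is a Brownian motion under $\calh$. The sums entering the linear terms, typically
\begin{eqnarray*}
S_i\;:=\;\sum_{j=1}^{k}\Delta_{i+j}X\,\Delta_{i+j}\big(\partial_\theta\phi(\cdot,\theta_0)\big)\qquad\text{and}\qquad\sum_{j=1}^{k}(\Delta_{i+j}X)^3\,\Delta_{i+j}\big(\partial_\theta\phi(\cdot,\theta_0)\big),
\end{eqnarray*}
are thus martingale transforms of conditionally centred increments (the odd powers of $\Delta_{i+j}X$ having conditional mean zero up to negligible drift corrections), so that $\esp[|S_i|^2]\lesssim k\Delta=O(n^{-1/2})$ instead of the $O(1)$ produced by summing absolute values; this recovered factor $\sqrt{k}\asymp n^{1/4}$ is precisely what lowers each contribution to $o_\proba(n^{-1/4})$. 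The last delicacy is that $\widehat{\theta}$ is built from the price data and so is not independent of the $S_i$; I would decouple it on the high-probability event $\{|\eta|\le C_\varepsilon/n\}$ supplied by (\ref{thetahat}), on which $|\eta^{T}S_i|\le(C_\varepsilon/n)|S_i|$, and then control the $\eta$-free sums $\sum_i|\widetilde{c}_{i+k}-\widetilde{c}_i|\,|S_{i+k}-S_i|$ in $L^1$ via Cauchy--Schwarz, using $\esp[(\widetilde{c}_{i+k}-\widetilde{c}_i)^2]^{1/2}\lesssim n^{-1/4}$ together with the variance bound above and keeping track of the overlap of the $k$-windows of neighbouring spot estimators. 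Collecting all the pieces yields the reduction, and combining it with Theorem 2.6 of \cite{vetter2015estimation} gives the announced stable limit.
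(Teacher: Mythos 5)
Your proposal is correct in outline, but it takes a genuinely different route from the paper. You compare $\widehat{\Xi}$ to $\widetilde{\Xi}$ directly, proving $\sqrt{n/k}\,(\widehat{\Xi}-\widetilde{\Xi})\to^{\proba}0$ and then citing Theorem 2.6 of \cite{vetter2015estimation} as a black box; since naive counting of the linear-in-$\eta$ terms is borderline $O_\proba(n^{-1/4})$, you are forced into the finer argument you describe: linearisation in $\eta$, localisation on $\{\|\eta\|\le C/n\}$, and the martingale-transform bound $\esp[S_i^2]\le Ck/n$ for $S_i=\sum_j \Delta_{i+j}X\,\Delta_{i+j}(\partial_\theta\phi(\cdot,\theta_0))$, which is legitimate precisely because the paper assumes $W$ remains a Brownian motion under $\calh_t=\calf_t\vee\sigma\{Q_{t_i}\}$ (the same device the paper uses in the volatility and functionals-of-volatility proofs). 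Your accounting checks out: the gain of $\sqrt{k}$ turns each cross term into $O(n/k^2)\cdot n^{-1/4}\cdot n^{-1/4}=O(n^{-1/2})=o(n^{-1/4})$, the quarticity part needs only crude bounds, and the degree-two and degree-three terms are indeed disposed of by counting. The paper, by contrast, never proves the direct comparison: it opens up Vetter's proof, defines plug-in versions $\widehat{A}_i$, $\widehat{B}_i$ of his building blocks $A_i$, $B_i$, and shows that the analogue of his Lemma A.1 survives the perturbation using nothing but $|\psi_i(\widehat{\theta})|\le K/n$, Jensen's inequality and $\esp|\Delta_iX|^r\le Cn^{-r/2}$ — no cancellation at all — because the approximation tolerance $C(pn^{-1})^{r/2}$ in that lemma is loose enough to absorb the crude plug-in error; the second step of Vetter's proof then goes through verbatim since the approximating variables $\widetilde{A}_i$, $\widetilde{B}_i$ are independent of the $Q$'s and of $\widehat{\theta}$. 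What each approach buys: yours treats the non-noisy CLT as a black box and is uniform with the paper's treatment of the other four examples, at the price of a delicate borderline analysis; the paper's is shorter and avoids the $\calh$-martingale trick entirely, at the price of re-running the internals of \cite{vetter2015estimation} and verifying that its replacement step is insensitive to the plug-in.
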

Moreover, if we define
\begin{eqnarray*}
G^{(1)} & = & \frac{T}{n}\sum_{i=0}^{[t/\Delta] - k} \widehat{q}_i^2,\\
G^{(2)} & = & T\sum_{i=0}^{[t/\Delta] - 2k}\l\{\frac{3}{2k}\l(\widehat{c}_{i+k} - \widehat{c}_{i}\r)^2 - \frac{6}{k^2}\widehat{q}_i\r\}\widehat{q}_i,\\
G^{(3)} & = & \frac{Tn}{k^2}\sum_{i=0}^{[t/\Delta] - 2k}\l(\widehat{c}_{i+k} - \widehat{c}_{i}\r)^4,
\end{eqnarray*}
and finally
\beas 
\widehat{AVAR} = \frac{453}{280}G^{(3)} - \frac{n}{k^2}\frac{486}{35}G^{(2)}-\frac{n^2}{k^4}\frac{1038}{35}G^{(1)}, 
\eeas 
we can derive the following studentized version of the previous central limit theorem.
\begin{corollary*}\label{corVolofVol}
Under the assumptions of the previous theorem, we have the stable convergence in law, when $k$ has the optimal rate  $c \sqrt{n}$ for $c>0$
\beas 
n^{1/4}\frac{  \widehat{\Xi} -  \Xi  }{\sqrt{c\widehat{AVAR}}} \to \caln(0,1).
\eeas 
\end{corollary*}
\section{Conclusion}
This paper develops plug-in estimators to estimate high-frequency quantities under parametric noise on five different examples. We do not find any particular difficulty when working out the theory of those examples. Another example of application can be found in \cite{andersen2019time}.

\section*{Acknowledgments}

We would like to thank Selma Chaker, Yingying Li, Mathias Vetter, Xinghua Zheng, 
Manh Cuong Pham, two anonymous referees, the participants of the 2nd International Conference on Econometrics and Statistics in Hong Kong, and the Econometric Society Australasian Meeting 2018 in Auckland for helpful discussions and advice. The research of Yoann Potiron is supported by a special private grant from Keio University and Japanese Society for the Promotion of Science Grant-in-Aid for Young Scientists No. 60781119. The research of Simon Clinet is supported by Japanese Society for the Promotion of Science Grant-in-Aid for Young Scientists No. 19K13671.

% BibTeX users please use one of
%\bibliographystyle{spbasic}      % basic style, author-year citations
% \bibliographystyle{abbrv} 

\section{Proofs}

\subsection{Preliminaries}
Due to our assumptions of local boundedness on $b_t$, $\widetilde{b}_t$, $c_t$ and $\widetilde{c}_t$, (\ref{infoass}) and (\ref{thetahat}), it is sufficient (see, e.g., Lemma 4.4.9 along with Proposition 2.2.1 in \cite{jacod2011discretization}) to assume throughout the proofs the following stronger assumption.\\
\textbf { (H) } We have that $b_t$, $\widetilde{b}_t$, $c_t$ and $\widetilde{c}_t$ are bounded. Moreover, there exists $K >0$ such that $\|\widehat{\theta} -\theta_0\| \leq K/n$, and $\max_{i,j,k} \l|Q_{t_i^{(k)}}^{(k,j)}\r| \leq K$.\\
Since the last two properties on $\widehat{\theta}$ and $Q$ are not directly implied by Proposition 2.2.1 from \cite{jacod2011discretization}, we now detail a general localization procedure in the next proposition, which we apply to the above particular cases in Corollary \ref{corolLocalization}. In the next lemma, if $A$ is a random event, $\overline{A} $ stands for $ \Omega - A$.   
\begin{proposition*} (Localization)
Let $(A_n^K)_{n \in \naturels, K \in \reels_+}$ be a doubly-indexed family of events such that $\lim_{K \to +\infty} \sup_{n \in \naturels} \proba[\overline{A_n^K}] = 0$. Let $(X_n)_{n \in \naturels}$ be a sequence of $\reels^d$-valued random variables for some $d \geq 1$, and $X$ another $\reels^d$-valued random variable, and assume that either of the following properties hold.
\begin{enumerate}
    \item (Local convergence in probability) For any $K\geq0$, $(X_n - X)\mathbb{1}_{ A_n^K} \to^\proba 0$.
    \item (Local convergence in distribution) For any $K\geq0$, for any $f$ continuous and bounded, $\esp[f(X_n)\mathbb{1}_{ A_n^K} ] \to \esp[f(X)]$.
\end{enumerate}
Then we have respectively
\begin{enumerate}
    \item $X_n \to^\proba X$.
    \item  $X_n \to^d X$.
\end{enumerate}
\end{proposition*}

\begin{proof}
We prove the convergence in probability first. Fix $\epsilon >0$ and $\eta >0$, and note that 
\beas \proba[|X_n -X| \geq \eta] &\leq& \proba\l[|X_n-X|\mathbb{1}_{ A_n^K} \geq \frac{\eta}{2}\r] +\proba\l[|X_n-X|\mathbb{1}_{ \overline{A_n^K}} \geq \frac{\eta}{2}\r]\\
&\leq & \proba\l[|X_n-X|\mathbb{1}_{ A_n^K} \geq \frac{\eta}{2}\r] +\proba\l[\overline{A_n^K}\r]. 
\eeas 
By taking $K$ large enough, we can assume that the second term in the right-hand side is dominated by $\epsilon$. Next, by taking $n$ large enough, we may assume the first term to be smaller than $\epsilon$ as well by the local convergence in probability. This proves $X_n \to^\proba X$. Next we prove the convergence in distribution. We have
\beas 
|\esp[f(X_n)] - \esp[f(X)]| &=& |\esp[f(X_n)\mathbb{1}_{ A_n^K}] - \esp[f(X)] +\esp[f(X_n)\mathbb{1}_{ \overline{A_n^K}}]|\\
&\leq& |\esp[f(X_n)\mathbb{1}_{ A_n^K}] - \esp[f(X)]| +C\proba \l[ \overline{A_n^K}\r]
\eeas 
for some constant $C$ using the boundedness of $f$. Again, taking $K$ large enough makes the third term arbitrary small, and then taking $n \to +\infty$ makes the difference between the first two terms tend to $0$, wich proves $X_n \to^d X$.  
\end{proof}

\begin{corollary*} \label{corolLocalization}
When proving the consistency of the estimator $\widehat{\Xi}$ toward $\Xi$ or the asymptotic normality $n^{\kappa}(\widehat{\Xi} - \Xi) \to \calm\caln(AB,AVAR)$, we may assume that there exists $K>0$ (which may be arbitrary large) such that $\|\widehat{\theta} -\theta_0\| \leq K/n$, and $\max_{i,j,k} \l|Q_{t_i^{(k)}}^{(k,j)}\r| \leq K$.  
\end{corollary*}

\begin{proof}
We show the case $\|\widehat{\theta} -\theta_0\| \leq K/n$, the case $\max_{i,j,k} |Q_{t_i^{(k)}}^{(k,j)}| \leq K$ being the same. For the consistency, we apply the previous proposition with $X_n = \widehat{\Xi}$, $X= \Xi$, and $A_n^K=\l\{ \|\widehat{\theta} -\theta_0\| \leq K/n\r\}$. By hypothesis (\ref{ratethetahat}), $n(\widehat{\theta} -\theta_0)$ is stochastically bounded which exactly means that $\lim_{K \to +\infty} \sup_{n \in \naturels} \proba[ n\|\widehat{\theta} -\theta_0\| \geq K] = 0$ (recall that $\widehat{\theta}$ depends on $n$). For the central limit theory, apply the local convergence distribution with $X_n = n^\kappa (\widehat{\Xi} -\Xi)$, $X \sim  \calm\caln(AB,AVAR)$, and again $A_n^K=\l\{ \|\widehat{\theta} -\theta_0\| \leq K/n\r\}$.
\end{proof}

All along the proofs, $C$ is a constant that may vary from one line to the next. 
We further provide some notation related to the decomposition  (\ref{efficientPrice}) of the efficient price, i.e. that
\begin{eqnarray}
\nonumber X_t & = & X_0 + \int_0^t b_s ds + \int_0^t \sigma_s dW_s + \int_0^t \int_\reels \delta (s,z) \mathbf{1}_{\{\mid \mid \delta (s,z) \mid \mid \leq 1\}} (\mu - \nu) (ds, dz)\\
 \nonumber & & + \int_0^t \int_\reels \delta (s,z) \mathbf{1}_{\{\mid \mid \delta (s,z) \mid \mid > 1\}} \mu (ds, dz),\\
\label{decompoX} & := & X_0 + B_t + M_t^{c} + M_t^d + J_t^{b}.
\end{eqnarray}
Note that in this decomposition $M_t^c$ (resp. $M_t^d$) is a continuous (resp. purely discontinuous) local martingale (see the discussion in Section 2.1.2 in \cite{jacod2011discretization}). Finally, we introduce $\Delta_i X(\theta) := \Delta_i X + \psi_i(\theta)$ where $\psi_i(\theta) := \mu_i(\theta_0) - \mu_i(\theta)$. In particular, note that  $\Delta_i \widehat{X} = \Delta_iX(\widehat{\theta})$. Similarly we define $\Delta_i X'(\theta) := \Delta_i X' + \psi_i(\theta)$ and $\Delta_i \widehat{X}' = \Delta_i X'(\widehat{\theta})$, corresponding to the estimated increments when the jump part $J$ has been removed. Moreover, $\esp_s$ is defined as the conditional expectation given $\mathcal{F}_s$.
\subsection{Proof of Theorem \ref{volatility}}
For this proof, due to our assumptions in Theorem \ref{volatility} and using the same argument as for Assumption \textbf{(H)} we further make the following assumption.\\ 
\textbf{ (H') } We have that $n \Delta_i t$ and $\widetilde{v}_t$ are bounded and bounded away from 0.\\
%We start with the proof of (\ref{thRV3}). 
Note that (\ref{thRV3}) is a particular case of (\ref{thRV2}) when $\phi = 0$. In what follows, we directly prove the general case (\ref{thRV2}). First of all, as $N/n \rightarrow^{\proba} F$, it is sufficient to show the stable convergence in law 
\begin{eqnarray}
\label{thRV30} n^{1/2} ( \widetilde{\Xi} - \Xi) \rightarrow \frac{2}{3} \int_0^T \widetilde{v}_s \sigma_s dX_s' + \int_0^T \sqrt{\frac{2}{3}\widetilde{u}_s - \frac{4}{9} \widetilde{v}_s^2} \sigma_s^2 dB_s.
\end{eqnarray}
Second, note that if we can prove that
\begin{eqnarray}
\label{proof1} n^{1/2} \sum_{i=1}^N \l(\Delta_i \widehat{X}\r)^2 \mathbf{1}_{ \{\mid \Delta_i \widehat{X}  \mid \leq w_i  \} } = n^{1/2} \sum_{i=1}^N \l(\Delta_i X^{'}\r)^2 + o_\proba (1),
\end{eqnarray}
then (\ref{thRV3}) holds in view of Theorem 1 (p. 585) in \cite{li2014realized} together with  the assumptions of Theorem \ref{volatility}. Accordingly, we show (\ref{proof1}) in what follows. On the account of the decomposition (\ref{decompoX0}), we have
\begin{eqnarray*}
n^{1/2} \sum_{i=1}^N \l(\Delta_i \widehat{X}\r)^2 \mathbf{1}_{ \{\mid \Delta_i \widehat{X}  \mid \leq w_i  \} } &=& n^{1/2} \sum_{i=1}^N \l(\Delta_i \widehat{X}'\r)^2 \mathbf{1}_{ \{\mid \Delta_i \widehat{X}  \mid \leq w_i  \} } +2n^{1/2} \sum_{i=1}^N \Delta_i \widehat{X}'\Delta_i J  \mathbf{1}_{ \{\mid \Delta_i \widehat{X}  \mid \leq w_i  \} } \\
& &+  n^{1/2} \sum_{i=1}^N \Delta_i J^2 \mathbf{1}_{ \{\mid \Delta_i \widehat{X}  \mid \leq w_i  \} },\\
& := & I + II + III.
\end{eqnarray*}
We will show in what follows that $I = n^{1/2} \sum_{i=1}^N \l(\Delta_i X^{'}\r)^2 + o_\proba (1)$, $II = o_\proba (1)$,  and $III = o_\proba(1)$.

\smallskip
We start with $I$. By definition, we have 
\begin{eqnarray*}
I & = &  n^{1/2} \sum_{i=1}^N \l(\Delta_i \widehat{X}'\r)^2 - n^{1/2} \sum_{i=1}^N \l(\Delta_i \widehat{X}'\r)^2 \mathbf{1}_{ \{\mid \Delta_i \widehat{X}  \mid > w_i  \} }.%n^{1/2} \sum_{i=1}^N \Delta_i \widetilde{X}^2 + n^{1/2} \sum_{i=1}^N \Delta_i \widetilde{X}^2 (\mathbf{1}_{ \{\mid \Delta_i X  \mid \leq w_i  \} } - 1),\\
\end{eqnarray*}
We show now that $n^{1/2} \sum_{i=1}^N \l(\Delta_i \widehat{X}'\r)^2 \mathbf{1}_{ \{\mid \Delta_i \widehat{X}  \mid > w_i  \} } = o_\proba (1)$. We have that 
\begin{eqnarray*}
n^{1/2} \sum_{i=1}^N \l(\Delta_i \widehat{X}'\r)^2 \mathbf{1}_{ \{\mid \Delta_i \widehat{X} \mid > w_i \} } & \leq & n^{1/2} \sum_{i=1}^N \l(\Delta_i \widehat{X}'\r)^2 \mathbf{1}_{ \{\mid \Delta_i \widehat{X}'  \mid > w_i/2  \} } + n^{1/2} \sum_{i=1}^N \l(\Delta_i \widehat{X}'\r)^2 \mathbf{1}_{ \{\mid \Delta_i J  \mid > w_i/2 \} }\\
& := & A + B.
\end{eqnarray*}
%&  & + n^{1/2} \sum_{i=1}^N \Delta_i \widetilde{X}^2 \mathbf{1}_{ \{\mid \Delta_i M  \mid > w_i/4  \} } + n^{1/2} \sum_{i=1}^N \Delta_i \widetilde{X}^2 \mathbf{1}_{ \{\mid \Delta_i J^b \mid > w_i/4  \} },\\

We first deal with $A$. By the domination $\mathbf{1}_{ \{\mid \Delta_i \widehat{X}'  \mid > w_i/2  \} } \leq 2^k \mid \Delta_i \widehat{X}'\mid^k w_i^{-k}$, we have for any $k > 0$:
\begin{eqnarray}\label{devXprime}
|A| & \leq & Cn^{1/2} \sum_{i=1}^N w_i^{-k}|\Delta_i \widehat{X}'|^{2+k}.%\mathbf{1}_{ \{\mid \Delta_i B  \mid > w_i/4  \} } + n^{1/2} \sum_{i=1}^N (\Delta_i M^c)^2 \mathbf{1}_{ \{\mid \Delta_i B  \mid > w_i/4  \} } + 2 n^{1/2} \sum_{i=1}^N |\Delta_i B \Delta_i M^c| \mathbf{1}_{ \{\mid \Delta_i B  \mid > w_i/4  \} },\\
%& \leq & C \Big(n^{1/2} \sum_{i=1}^N  \frac{\mid \Delta_i B \mid^3}{w_i}  + n^{1/2} \sum_{i=1}^N \frac{(\Delta_i M^c)^2  \mid \Delta_i B \mid}{w_i} + n^{1/2} \sum_{i=1}^N \frac{\mid \Delta_i M^c \mid  ( \Delta_i B )^2}{w_i} \Big).
\end{eqnarray}
Now, note that by Assumption \textbf{(H)} along with the fact that $\psi_i$ is $C^3$ in $\theta$ and that $\Theta$ is a compact set, we easily obtain that for any $k \geq 1$, $|\psi_i(\widehat{\theta})|^k \leq Cn^{-k}$. From here, by Assumption \textbf{ (H') } we deduce by Burkh\"{o}lder-Davis-Gundy inequality that 
\bea 
\esp |\Delta_i \widehat{X}'|^k \leq C(n^{-k/2} + n^{-k}) \leq Cn^{-k/2},\label{devContinuousTheta}
\eea  
and so we can conclude that taking $k$ large enough, $A = o_\proba(1)$ as a result of the boundedness of $n \Delta_i t$, and $N/n \rightarrow F$. %In addition, we can deduce that the second term is $o_\proba(1)$ further using Lemma 2.2.10 (p. 55) in \cite{jacod2011discretization}, conditional Cauchy-Schwartz inequality, a conditional version of (2.1.34) in \cite{jacod2011discretization} (p. 40) and the fact that $\sigma_t$ is bounded. 

\smallskip
%The term $B$ can be shown in a similar way, but using the key relation $\mathbf{1}_{ \{\mid \Delta_i M^c  \mid > w_i/4  \} } \leq 4^k |\Delta_i M^c|^k/w_i^k$ for $k$ big enough. 
Now, we deal with $B$. Remark that by \textbf{(H')} and  H\"{o}lder's inequality we have
\beas 
 |B| &\leq& 2n^{1/2}  \sum_{i=1}^N \l(\Delta_i \widehat{X}'\r)^2 |\Delta_i J| |w_i|^{-1}  \\
 &\leq & Cn^{1/2 + \bar{\omega}} \sum_{i=1}^N \l(\Delta_i \widehat{X}'\r)^2  |\Delta_i J| \\
 &\leq & Cn^{1/2 + \bar{\omega}} \l(\sum_{i=1}^N \l(\Delta_i \widehat{X}'\r)^{2p}\r)^{1/p} \l(\sum_{i=1}^N   |\Delta_i J|^q\r)^{1/q}
\eeas 
where $1/p + 1/q = 1$ and $p,q >1$. By (\ref{devXprime}) we get $\l(\sum_{i=1}^N \l(\Delta_i \widehat{X}'\r)^{2p}\r)^{1/p} = O_\proba(n^{1/p - 1})$ and since $q > 1$, we also have $\sum_{i=1}^N   |\Delta_i J|^q = O_\proba(1)$ because the jumps are summable. Indeed, note first that by application of Theorem 3.3.1, Case A, p.70 from \cite{jacod2011discretization} under assumption (A-c), with $f(x) = |x|^q = o(x)$ for $x \to 0$ since $q >1$, we have the convergence $\sum_{i=1}^N |\Delta_i J|^q \to^\proba \sum_{0 < s \leq T} |\Delta_s J|^q$. The stochastic boundedness of the left-hand side will therefore be proved if we show that the limit is finite almost surely. We can write 
\beas 
\sum_{0 < s \leq T} |\Delta_s J|^q &=&    \sum_{0 < s \leq T} |\Delta_s J|^q \mathbf{1}_{\{|\Delta_s J| \geq 1\}} + \sum_{0<s \leq T} |\Delta_s J|^q \mathbf{1}_{\{|\Delta_s J| < 1\}}.
\eeas 
The first term of the right-hand side is clearly finite since there is only a finite number of jumps larger than $1$ on the interval $[0,T]$. Moreover, for the second term, using that $|x|^q < |x|$ for $x \in [0,1)$ when $q > 1$, and using that the jumps are summable yields
\begin{eqnarray*}
\sum_{0<s \leq T} |\Delta_s J|^q \mathbf{1}_{\{|\Delta_s J| < 1\}} \leq \sum_{0<s \leq T} |\Delta_s J| < +\infty \textnormal{ a.s. }
\end{eqnarray*}
Overall this yields $B = O_\proba(n^{ 1/p+ \bar{\omega} -1/2})$, which tends to $0$ as soon as $p$ is taken larger than $(1/2-\bar{\omega})^{-1}$, which is possible since $\bar{\omega} < 1/2$. Now we conclude for $I$ by showing that we have 
\bea 
n^{1/2} \sum_{i=1}^N \l(\Delta_i \widehat{X}'\r)^2 = n^{1/2} \sum_{i=1}^N \l(\Delta_i X^{'}\r)^2 + o_\proba(1).
\label{deviationRVTheta}
\eea 
Note that
\beas
n^{1/2}\sum_{i=1}^N \l( \l(\Delta_i \widehat{X}'\r)^2 - \l(\Delta_i X^{'}\r)^2 \r) = 2n^{1/2} \sum_{i=1}^N  \Delta_i X^{'} \psi_i(\widehat{\theta}) + n^{1/2} \sum_{i=1}^N \psi_i(\widehat{\theta})^2,
\eeas 
and the second term in the right-hand side of the equation is negligible as a direct consequence of the domination $|\psi_i(\widehat{\theta})| \leq C/n$. We show now that the first term is also negligible. By the mean value theorem, we also have for some $\overline{\theta} \in [\theta_0,\widehat{\theta}]$ that  
\begin{eqnarray}  
\label{mvtproof}
n^{1/2}\sum_{i=1}^N  \Delta_i X^{'} \psi_i(\widehat{\theta}) = n^{1/2}(\widehat{\theta} -\theta_0)^T\sum_{i=1}^N  \Delta_i X^{'} \partial_\theta \psi_i(\theta_0) + \frac{ n^{1/2}(\widehat{\theta} -\theta_0)^T}{2}\sum_{i=1}^N  \Delta_i X^{'} \partial_\theta^2 \psi_i(\overline{\theta})(\widehat{\theta} -\theta_0).
\end{eqnarray}
Using that $\widehat{\theta} - \theta_0 = O_\proba(1/n)$, and the fact that $\|\partial_\theta^2 \psi(\overline{\theta})\| \leq C$ we deduce that the second term is negligible. Finally, note that $\sum_{i=1}^N  \Delta_i X^{'} \partial_\theta \psi_i(\theta_0)$ can be decomposed as the sum of $\sum_{i=1}^N  \Delta_i \breve{B} \partial_\theta \psi_i(\theta_0)$, where $\breve{B}_t = \int_0^t b_s' ds$, and which is easily proved to be negligible given the local boundedness of $b$ and $\delta$, and $\sum_{i=1}^N  \Delta_i M^c \partial_\theta \psi_i(\theta_0)$, which is a sum of martingale increments with respect to the filtration $\calh_t = \calf_t \vee \sigma \{Q_{t_i}, 1 \leq i \leq N \}$. Thus, by (2.2.35) in \cite{jacod2011discretization}, proving that this term tends to 0 boils down to showing that
\beas 
n^{-1}\sum_{i=1}^N \esp \big[(\Delta_i M^c)^2 \|\partial_\theta \psi_i(\theta_0)\|^2 \big] \to 0,
\eeas 
which is immediate since $\|\partial_\theta \psi_i(\theta_0)^2\| \leq C$, $N/n \to^\proba F$ and $\esp (\Delta_i M^c)^2 \leq C/n$ by Assumption \textbf{(H')}.
\smallskip

We now turn to $II$. As by (\ref{devContinuousTheta}) along with Assumption \textbf{ (H') }, we have for any $k>0$ the inequality  $\proba\l[|\Delta_i \widehat{X}'| > w_i/2\r] \leq Cn^{k(\bar{\omega} - 1/2)}$, we can assume without loss of generality, by taking $k$ sufficiently large, that we can add the indicator $\ind{\mid \Delta_i \widehat{X}'  \mid \leq w_i/2}$ in $II$, i.e. that 
\beas  
II &=& 2n^{1/2} \sum_{i=1}^N \Delta_i \widehat{X}'\Delta_i J  \mathbf{1}_{ \{\mid \Delta_i \widehat{X}  \mid \leq w_i  \}} \ind{\mid \Delta_i \widehat{X}'  \mid \leq w_i/2}, \\
&\leq& 2n^{1/2} \sum_{i=1}^N \Delta_i \widehat{X}'\Delta_i J  \mathbf{1}_{ \{\mid \Delta_i J  \mid \leq 3w_i/2  \}} \ind{\mid \Delta_i \widehat{X}'  \mid \leq w_i/2},
\eeas 
so that 
\beas 
|II| &\leq& 2n^{1/2} \sum_{i=1}^N  |\Delta_i \widehat{X}'| |\Delta_i J|^{1-r}|\Delta_i J|^{r} \mathbf{1}_{ \{\mid \Delta_i J  \mid \leq 3w_i/2  \}} \ind{\mid \Delta_i \widehat{X}'  \mid \leq w_i/2},\\
&\leq& Cn^{1/2 -\bar{\omega}(2-r)} \underbrace{\sum_{i=1}^N |\Delta_i J|^r}_{O_\proba(1)}, 
\eeas  
where we recall that $r >0$ is the jump index of $J$. Given that $\bar{\omega} \in (1/(2(2-r)), 1/2)$, we immediately deduce that $II =o_\proba(1)$. 
Finally, we can show that $III = o_\proba (1)$ with the same line of reasoning as for $II$.

\subsection{Proof of Corollary \ref{volatilitystudent}}
We show (\ref{volatilitystudent2}), as (\ref{volatilitystudent1}) is a particular case where $\phi = 0$. This amounts to proving that $\widehat{AB}$ and $\widehat{AVAR}$ are consistent.

\smallskip
We show first that $\widehat{AB}$ is consistent. As in the previous proofs (in this case this is actually quite easier as we only show the consistency), we can remove the truncation and the parametric noise part and replace $\Delta_i \widehat{X}$ by $\Delta_i  X'$. We obtain that
\begin{eqnarray*}
\widehat{AB} & = & \sum_{i=1}^{B} \frac{2}{3} \overline{v \sigma}_{i} (X_{t_{ih}}' -X_{t_{(i-1)h}}') + o_\proba (1),
\end{eqnarray*}
where  
$$\overline{v \sigma}_{i} = \frac{N^{1/2} \sum_{j=h(i-1)+1}^{hi} (\Delta_j  X^{'})^3}{\sum_{j=h(i-1)+1}^{hi} (\Delta_j X^{'})^2}.$$ 
A Taylor expansion on the function $f(x,y) = x/y$ along with a local version of the convergence (\ref{volatilityass1}), the fact that $\sum_{i=1}^N \l(\Delta_i X^{'}\r)^2 \rightarrow^\proba \Xi$, that $\sigma_t$ and $v_t$ are bounded and bounded away from 0 and that $N/n \rightarrow^\proba F$ yields
\begin{eqnarray*}
\widehat{AB} & = & \sum_{i=1}^{B} \frac{2}{3} v_{t_{i-1}} \sigma_{t_{i-1}} ( X_{t_{ih}}' -  X_{t_{(i-1)h}}') + o_\proba (1).
\end{eqnarray*}
Applying Theorem I.4.31 (iii) on p. 47 in \cite{JacodLimit2003} together with the fact that $\sigma_t$ and $v_t$ are bounded and bounded away from 0, we conclude that $\widehat{AB} \to^\proba AB$. 

\smallskip
We show now that $\widehat{AVAR}$ is consistent. In this case we can again by similar arguments remove the truncation and substitute $\Delta_i \widehat{X}$ by $\Delta_i X'$, i.e. it holds that 
\begin{eqnarray*}
\widehat{AVAR} & = & \frac{2N}{3}  \sum_{i=1}^N (\Delta_i X')^4  - \frac{4}{9} \sum_{i=1}^{B} (\overline{v \sigma}_{i})^2 (X_{t_{ih}}' - X_{t_{(i-1)h}}')^2 + o_\proba (1).
\end{eqnarray*}
By (\ref{volatilityass2}) together with the fact that $N/n \rightarrow^\proba F$, we deduce that 
$$\frac{2N}{3}  \sum_{i=1}^N (\Delta_i X')^4 \rightarrow^\proba \frac{2}{3} \int_0^T u_s \sigma_s^4 ds.$$
Furthermore, using similar techniques as for $\widetilde{AB}$, we obtain that 
$$\frac{4}{9} \sum_{i=1}^{B} (\overline{v \sigma}_{i})^2 (X_{t_{ih}}' - X_{t_{(i-1)h}}')^2 \rightarrow^\proba \frac{4}{9} \int_0^T v_s^2 \sigma_s^4 ds.$$
We have thus shown that $\widehat{AVAR} \to^\proba AVAR$.

%\smallskip
%To prove (\ref{volatilitystudent2}), one can use a Taylor expansion in a similar way as for (\ref{volatilitystudent1}), then replace $\mathbf{1}_{ \{\mid \Delta_i \widehat{X}  \mid \leq w_i  \} }$ by $\mathbf{1}_{ \{\mid \Delta_i X  \mid \leq w_i  \} }$ on the grounds of Lemma \ref{lemmaindicator} together with standard techniques used in the proof of Theorem \ref{volatility}, and finally use the decomposition (\ref{decompoX0}), along with the techniques used in the proof of Theorem \ref{volatility}.

\subsection{Proof of Theorem \ref{bipower}}
It is immediate to see that (\ref{bipowereq}) holds as a consequence of (\ref{bipowereq0}) along with Theorem 3.3 in \cite{vetter2010limit}. Accordingly, we show that (\ref{bipowereq0}) holds in what follows, i.e. that
$$n^{1/2} \widehat{\Xi} = n^{1/2} \widetilde{\Xi} + o_\proba(1).$$
First, we show that we can assume without loss of generality that the price process $X$ is continuous, i.e. $J=0$. To do so, we introduce $\widehat{\Xi}^{'}$ as the estimator applied to $X^{'}$ in lieu of $X$. We show that 
\bea 
n^{1/2}\l( \widehat{\Xi} - \widehat{\Xi}'  \r) \to^\proba 0.
\label{removeJumpsBivariate}
\eea 
From (\ref{decompoX0}), we can easily obtain the key decomposition
\bea 
\Delta_i \widehat{X} = \Delta_iX(\widehat{\theta}) = \underbrace{\Delta_i \breve{B} +  \psi_i(\widehat{\theta})}_{\Delta_iB^{'}} + \Delta_i M^c + \Delta_i J,
\label{keyDecompositionBivariate}
\eea 
and by assumption \textbf{(H)}, also recall that we have $ |\psi_i(\widehat{\theta})| \leq \l|\sup_{\theta \in \Theta}\partial_\theta \psi_i(\theta) \r||\widehat{\theta}-\theta_0| \leq C/n$. Thus, remark that all usual conditional moment estimates for $\Delta_i \breve{B}$ are also true for $\Delta_i B^{'}$. More precisely, replacing $\Delta_i \breve{B}$ by $\Delta_i B^{'}$ and $\calf_i$ by $\calg_i = \calf_i \vee \sigma\{Q_{t_i}, 0  \leq i \leq n\}$ in the proof of Lemma 13.2.6 (p. 384) in \cite{jacod2011discretization}, all the conditional estimates are preserved and thus the lemma holds true in the presence of the error term $\psi_i(\widehat{\theta})$. Indeed, the three key ingredients for the original proof of Lemma 13.2.6 are the following (with our own notations): defining 

$$U_i = \frac{|\Delta_i X'|}{\Delta_n^{1/2}}, V_i = \l(\frac{\int_{t_{i-1}}^{t_i} \int_\reels \gamma(z)^{1/r}\mu(ds,dz)}{\Delta_n^{\bar{\omega}}}\r) \wedge 1 \textnormal{ and } W_i = \l(\frac{\int_{t_{i-1}}^{t_i} \int_\reels \gamma(z)^{1/r}\mu(ds,dz)}{\Delta_n^{1/2}}\r) \wedge 1,$$
we have (see (13.2.22)-(13.2.23) in \cite{jacod2011discretization}, pp.384-385), for any $m >0$,
\bea \label{devUi} 
\esp[(U_i)^m | \calg_{i-1}] \leq C_m,
\eea 

\bea 
\esp[(V_i)^m | \calg_{i-1}] \leq \Delta_n^{(1-r\bar{\omega})(1 \wedge (m/r))}\phi_n,
\eea 
\bea \label{devWi}
\esp[(W_i)^m | \calg_{i-1}] \leq \Delta_n^{(1-r/2)(1 \wedge (m/r))}\phi_n,
\eea 
where $C_m >0$ is a constant possibly depending on $m$, and $\phi_n$ is a suitable deterministic sequence tending to $0$ as $n \to +\infty$. Note that in the presence of the term $\psi_i(\widehat{\theta})$, that is, if $V_i$ and $W_i$ are unchanged but $U_i$ is changed to $\widehat{U}_i = \frac{|\Delta_{i}\widehat{X}'|}{\Delta_n^{1/2}}$, the conditional deviations (\ref{devUi})-(\ref{devWi}) remain unchanged since
\beas 
\l|\esp[(U_i)^m | \calg_{i-1}] - \esp[(\widehat{U}_i)^m | \calg_{i-1}]\r| = O_\proba(n^{-1}\Delta_n^{-1/2}) \to 0
\eeas 
using that $\psi_i(\widehat{\theta}) \leq C/n$. Therefore, Lemma 13.2.6 from \cite{jacod2011discretization} still holds when $X$ and $X'$ are respectively replaced by $\widehat{X}$ and $\widehat{X}'$. Applied with $F(x_1,x_2) = |x_1||x_2|$, $k=2$, $p' = s'  =2$, $s=1$ and $\theta = 0$, this directly yields that for all $q \geq 1$ and for some deterministic sequence $a_n$ going to 0,
\beas  
\esp\l||\Delta_i \widehat{X}||\Delta_{i-1} \widehat{X}| \ind{|\Delta_i\widehat{X}| \leq w}\ind{|\Delta_{i-1}\widehat{X}|  \leq w} - |\Delta_i \widehat{X}'||\Delta_{i-1} \widehat{X}'| \ind{|\Delta_i\widehat{X}'| \leq w} \ind{|\Delta_{i-1}\widehat{X}'| \leq w} \r|^q \leq  Ca_n \Delta_n^{(2q-r)\bar{\omega} + 1},
\label{removeJumpBivariate}
\eeas 
where we have used that $q/r > 1$ and $\bar{\omega} <1/2$, and where we recall that $\Delta_i \widehat{X}' =  \Delta_i X'(\widehat{\theta})$. Given the definitions of $\widehat{\Xi}$ and $\widehat{\Xi}'$, applying the above domination with $q=1$, we directly deduce the estimate
\beas 
n^{1/2} \esp |\widehat{\Xi} - \widehat{\Xi}^{'}| &\leq& a_n n^{1/2 - (2-r)\bar{\omega}} \to 0,
\eeas 
since $\bar{\omega} \in (1/(2(2-r)),1/2)$. From now on, by (\ref{removeJumpsBivariate}), we are left to show $n^{1/2}(\widehat{\Xi}' - \widetilde{\Xi}) \to^\proba 0$. By definition, we have that   
\begin{eqnarray*}
n^{1/2} \widehat{\Xi}' & = & \frac{\pi n^{1/2}}{2} \sum_{i=2}^n \big| \Delta_i \widehat{X}' \big| \mathbf{1}_{ \{\mid \Delta_i \widehat{X}'  \mid \leq w  \} } \big| \Delta_{i-1} \widehat{X}' \big| \mathbf{1}_{ \{\mid \Delta_{i-1} \widehat{X}'  \mid \leq w  \} },
\\ & = & \frac{\pi n^{1/2}}{2} \sum_{i=2}^n \big| (\Delta_i X' + \psi_i(\widehat{\theta})) (\Delta_{i-1} X' + \psi_{i-1}(\widehat{\theta})) \big| \mathbf{1}_{ \{\mid \Delta_i \widehat{X}'  \mid \leq w  \} } \mathbf{1}_{ \{\mid \Delta_{i-1} \widehat{X}'  \mid \leq w  \} }.
\end{eqnarray*}
If we introduce $\breve{\Xi}  = \frac{\pi}{2} \sum_{i=2}^n \big| \Delta_i X' \big| \mathbf{1}_{ \{\mid \Delta_i \widehat{X}' \mid \leq w  \} } \big| \Delta_{i-1} X' \big| \mathbf{1}_{ \{\mid \Delta_{i-1} \widehat{X}'  \mid \leq w  \} }$, we have 

\begin{eqnarray*}
 n^{1/2} \big| \widehat{\Xi}' - \breve{\Xi} \big| & = & \frac{\pi n^{1/2}}{2} \sum_{i=2}^n  \big| \Delta_i X'\big|  \mathbf{1}_{ \{\mid \Delta_i \widehat{X}'  \mid \leq w  \} } \l(\big| \Delta_{i-1} \widehat{X}' \big| -\big| \Delta_{i-1} X' \big|  \r)\mathbf{1}_{ \{\mid \Delta_{i-1} \widehat{X}'  \mid \leq w  \} } \\
 &+& \frac{\pi n^{1/2}}{2} \sum_{i=2}^n \l(\big| \Delta_i \widehat{X}' \big| - \big| \Delta_i X'\big| \r)\mathbf{1}_{ \{\mid \Delta_i \widehat{X}'  \mid \leq w  \} } \big| \Delta_{i-1} \widehat{X}' \big| \mathbf{1}_{ \{\mid \Delta_{i-1} \widehat{X}'  \mid \leq w  \} } \\
 &=& I+ II.
\end{eqnarray*}

We prove (\ref{bipowereq0}) in two steps in what follows. First, we show that $n^{1/2} \big| \widetilde{\Xi} - \breve{\Xi} \big| = o_\proba (1)$. Second, we prove that $I = o_\proba (1)$ and $II = o_\proba (1)$. We have %and $III = o_\proba (1)$. 
\beas 
n^{1/2} \big| \widetilde{\Xi} - \breve{\Xi} \big| \leq \frac{\pi n^{1/2}}{2}\sum_{i=2}^n |\Delta_iX'||\Delta_{i-1}X'||\mathbf{1}_{ \{\mid \Delta_i \widehat{X}'  \mid \leq w  \} }\mathbf{1}_{ \{\mid \Delta_{i-1} \widehat{X}'  \mid \leq w  \} } -\mathbf{1}_{ \{\mid \Delta_i  X'  \mid \leq w  \} }\mathbf{1}_{ \{\mid \Delta_{i-1}  X'  \mid \leq w  \} }|,
\eeas 
so that by standard inequalities we can deduce $n^{1/2} \big| \widetilde{\Xi} - \breve{\Xi} \big| \to^\proba 0$ if
\bea 
\esp \l|\mathbf{1}_{ \{\mid \Delta_i \widehat{X}' \mid \leq w  \} } - \mathbf{1}_{ \{\mid \Delta_i X' \mid \leq w  \} }\r|  \leq Cn^{-\beta}
\label{devIndicator}
\eea
for some $\beta >0$ large enough (where $C$ possibly depends on $\beta$). Let us thus show now (\ref{devIndicator}). Introducing $\breve{\Delta}$ as the symmetric difference operator, we have
\beas 
  \l|\mathbf{1}_{ \{\mid \Delta_i \widehat{X}' \mid \leq w  \} } - \mathbf{1}_{ \{\mid \Delta_i X' \mid \leq w  \} }\r|  &=&   \mathbf{1}_{ \{\mid \Delta_i \widehat{X}' \mid \leq w  \} \breve{\Delta} \{\mid \Delta_i  X' \mid \leq w  \}} \\
&\leq&   \ind{|\Delta_iX' -w| \leq |\psi_i(\widehat{\theta})|}\\
&\leq&  \ind{|\Delta_iX' -w| \leq C/n}.\\
\eeas 
Now, letting $\gamma \in (\bar{\omega},1/2)$ and $q >0$, since $\{|\Delta_iX' - w| \leq C/n\} \cap \{|\Delta_iX' | \leq n^{-\gamma}\} = \emptyset$ for $n$ large enough, we automatically have
\beas 
 \ind{|\Delta_iX' - w| \leq C/n} \leq    \ind{|\Delta_iX' | > n^{-\gamma}} \leq n^{\gamma q}|\Delta_i X'|^q,
 \eeas 
hence
 \beas 
\esp \l|\mathbf{1}_{ \{\mid \Delta_i \widehat{X}' \mid \leq w  \} } - \mathbf{1}_{ \{\mid \Delta_i X' \mid \leq w  \} }\r| &\leq& n^{\gamma q}\esp  |\Delta_iX' |^q\\
&\leq& Cn^{q(\gamma-1/2)},
\eeas 
and taking $q$ large enough we get (\ref{devIndicator}). Finally, we prove that $I = o_\proba (1)$. The proof for $II$ is similar. First note that since $X'$ is continuous and $\psi_i(\widehat{\theta}) < K/n$, we can get rid of the indicator functions in $I$ following the same line of reasoning as for (\ref{devXprime}). Moreover, following arguments similar to that of (\ref{devIndicator}), $I$ is asymptotically unaffected if $\mathbf{1}_{\{ |\psi_{i-1}(\widehat{\theta})| < |\Delta_{i-1} X'|\}}$ is present in the sum. Without loss of generality, we can therefore assume that 
\beas 
I = \frac{\pi n^{1/2}}{2} \sum_{i=2}^n  \big| \Delta_i X'\big| \mathbf{1}_{\{ |\psi_{i-1}(\widehat{\theta})| < |\Delta_{i-1} X'|\}}  \l(\big| \Delta_{i-1} \widehat{X}' \big| -\big| \Delta_{i-1} X' \big|  \r) + o_\proba(1).
\eeas 
Next, we decompose $I$ as follows, using the identity for $|y| \leq |x|$, $|x+y|-|x| = y \textnormal{sgn}(x)$ with sgn the usual sign function:
\beas 
I &=&  \frac{\pi n^{1/2}}{2} \sum_{i=2}^n  \psi_{i-1}(\widehat{\theta}) \big| \Delta_i X'\big|  \textnormal{sgn}(\Delta_{i-1} X')    \mathbf{1}_{\{|\psi_{i-1}(\widehat{\theta})| \leq |\Delta_i X'| \}} + o_\proba(1).\\
\eeas  
Again, the indicator function can be removed since its complement event is negligible (it can be majorated by e.g $C|\Delta_i X'|^p/n^p$ for any $p$ where $C$ possibly depends on $p$), which yields the approximation
\beas 
I &=&  \frac{\pi n^{1/2}}{2} \sum_{i=2}^n  \psi_{i-1}(\widehat{\theta}) \big| \Delta_i X'\big|  \textnormal{sgn}(\Delta_{i-1} X')  + o_\proba(1)\\
&=& \frac{\pi n^{1/2} (\widehat{\theta} - \theta_0)^T}{2} \sum_{i=2}^n  \partial_\theta \psi_{i-1}(\theta_0) \big| \Delta_i X'\big|  \textnormal{sgn}(\Delta_{i-1} X')  + o_\proba(1)
\eeas 
where the second step is another application of the mean value theorem (as in the proof of Theorem \ref{volatility}). Now note that standard arguments yield
\beas 
\proba[\textnormal{sgn}(\Delta_{i-1} X') \neq \textnormal{sgn}(\Delta_{i-1} W)] = o_\proba(n^{-p}) 
\eeas 
and 
\beas 
\esp||\Delta_i X'| - |\sigma_{t_{i-2}} \Delta_i W ||^p &\leq& \esp|\Delta_i X' -  \sigma_{t_{i-2}} \Delta_i W|^p\\
&\leq& Cn^{-p}
\eeas 
for any $p >0$ (where the constant $C$ may depend on $p$) and where we have used (\ref{volatilityDefinition}), so that using $\widehat{\theta} - \theta_0 = O_\proba(n^{-1})$ gives 
\beas 
I = \frac{\pi n^{1/2} (\widehat{\theta} - \theta_0)^T}{2} \sum_{i=2}^n \sigma_{t_{i-2}} \partial_\theta \psi_{i-1}(\theta_0) \big| \Delta_i W\big|  \textnormal{sgn}(\Delta_{i-1} W)     + o_\proba(1)\\
\eeas 
which are conditionally centered and uncorrelated increments, with $\textnormal{Var}\l.\l[| \Delta_i W|  \textnormal{sgn}(\Delta_{i-1} W) \r| \calf_{i-2} \r] = O(n^{-1})$,  so that $\sum_{i=2}^n \sigma_{t_{i-2}} \partial_\theta \psi_{i-1}(\theta_0) \big| \Delta_i W\big|  \textnormal{sgn}(\Delta_{i-1} W) = O_\proba(1)$. Therefore, using again that  $\widehat{\theta} - \theta_0 = O_\proba(n^{-1})$, we have $I \to^\proba 0$.
% \textcolor{red}{First, it is s traightforward to see that $I \leq  n^{1/2} \sum_{i=2}^N  \big| \psi_i(\widehat{\theta}) \big| \big|
%  \psi_{i-1}(\widehat{\theta}) \big|$. In addition, since $\phi$ is $C^3$ in $\theta$, and because $\max_i \| Q_{t_i} \|$ is bounded, 
% $$I \leq C n^{3/2}  \| \widehat{\theta} - \theta_0 \|^2,$$
% and this is $o_\proba (1)$ by (\ref{thetahat}). To prove that $II$ and $III$ are negligible, we can follow the proof of Theorem 2 (p. 46) in \cite{li2016efficient}.}

\subsection{Proof of Corollary \ref{bipowerstudent}}
By the stable convergence of Theorem \ref{bipower}, the proof amounts to showing that $\widehat{AVAR}$ is consistent, which is actually a corollary to Theorem \ref{theoremG} in the special case $g(x) = x^2$.

\subsection{Proof of Theorem \ref{covariance}}
Following the discussion at the beginning of Appendix A.2 (p. 30) in \cite{potiron2017estimation} and Proposition 1 from \cite{mykland2009inference}, p. 1408, we can assume without loss of generality that the drift $b_t$ is null as the price process $X$ is continuous.

\smallskip
First, note that (\ref{theorem}) is a straightforward consequence of (\ref{proof201710301135}) together with Theorem 1 (p. 25) in \cite{potiron2017estimation}. Consequently, we only need to show (\ref{proof201710301135}). We now provide the proof of (\ref{proof201710301135}), i.e. that $$\alpha^{-1} \widehat{\Xi} = \alpha^{-1} \widetilde{\Xi} + o_\proba (1).$$
First, note that as a result of Remark 5 (p. 25) in \cite{potiron2017estimation}, $n^{1/2}$ and $\alpha^{-1}$ are of the same order, and thus it is sufficient to show that 
$$n^{1/2} \widehat{\Xi} = n^{1/2} \widetilde{\Xi} + o_\proba (1).$$
Second, we have to reexpress the Hayashi-Yoshida estimator (\ref{HY0}). To do so, we follow the beginning of Section 4.3 in \cite{potiron2017estimation} and introduce some (common) definition in the Hayashi-Yoshida literature. For any positive integer $i$, we consider the 
$i$th sampling time of the first asset $t_{i}^{(1)}$. We define two related random times, $t_{i}^{-}$ 
and $t_{i}^{+}$, which correspond respectively to the closest sampling time of the second asset that is strictly smaller than 
$t_{i}^{(1)}$, and the closest sampling time of the second asset that is (not necessarily strictly) bigger than $t_{i}^{(1)}$. Formally, they are defined as
\begin{eqnarray}
\label{tau-0s} t_{0}^{-} & = & 0, \\
\label{tau-0} t_{i}^{-} & = & \max \{ t_{j}^{(2)} : t_{j}^{(2)} < t_{i}^{(1)} \} \text{ for } i \geq 1, \\
\label{tau+0} t_{i}^{+} & = & \min \{ t_{j}^{(2)} : t_{j}^{(2)} \geq t_{i}^{(1)} \}.
\end{eqnarray}
Rearranging the terms in (\ref{HY0}) gives us
\begin{eqnarray}
\label{HY01}  \widetilde{\Xi} = \sum_{t_{i}^{+} < t} \Delta_i X^{(1)} 
(X_{t_{i}^{+}}^{(2)} - X_{t_{i-1}^{-}}^{(2)}) + o_\proba(n^{-1/2}). 
\end{eqnarray}
We deduce that 
\begin{eqnarray*}
n^{1/2} \widehat{\Xi} & = & n^{1/2} \sum_{t_{i}^{+} < t} \Delta_i \widehat{X}^{(1)} 
(\widehat{X}_{t_{i}^{+}}^{(2)} - \widehat{X}_{t_{i-1}^{-}}^{(2)}) + o_\proba(1), \\
& = & n^{1/2} \widetilde{\Xi} + n^{1/2} \sum_{t_{i}^{+} < t} \psi_i^{(1)} (\widehat{\theta}^{(1)}) \big((\phi(Q_{t_i^+}^{(2)},\theta_0^{(2)} ) - \phi(Q_{t_{i-1}^-}^{(2)},\theta_0^{(2)})) - (\phi(Q_{t_i^+}^{(2)}, \widehat{\theta}^{(2)}) - \phi(Q_{t_{i-1}^-}^{(2)}, \widehat{\theta}^{(2)})) \big)  \\ & & + n^{1/2} \sum_{t_{i}^{+} < t} \Delta_i X^{(1)} \big((\phi(Q_{t_i^+}^{(2)},\theta_0^{(2)} ) - \phi(Q_{t_{i-1}^-}^{(2)},\theta_0^{(2)})) - (\phi(Q_{t_i^+}^{(2)}, \widehat{\theta}^{(2)}) - \phi(Q_{t_{i-1}^-}^{(2)}, \widehat{\theta}^{(2)})) \big) \\
 & & + n^{1/2} \sum_{t_{i}^{+} < t}  \psi_i^{(1)} (\widehat{\theta}^{(1)}) (X_{t_{i}^{+}}^{(2)} - X_{t_{i-1}^{-}}^{(2)}) + o_\proba (1), \\
& := & n^{1/2} \widetilde{\Xi} + I + II + III + o_\proba (1).
\end{eqnarray*}
Our aim is to show that $I = o_\proba (1)$, $II = o_\proba (1)$ and $III = o_\proba (1)$. We start with $I$. On the account that $\phi$ is $C^3$ in $\theta$, and because $\max_i \|Q_{t_i} \|$ is bounded, 
$$I \leq C n^{1/2} N | \widehat{\theta} - \theta_0 |^2,$$
and this is $o_\proba (1)$ by (\ref{thetahat}), Remark 5 (p. 25) and Lemma 8 (p. 31) in \cite{potiron2017estimation}.

\smallskip
As for $II$, the proof of Theorem 2 (p. 46) in \cite{li2016efficient} in the volatility case goes through with one change. To prove (69) in the cited paper, since $$\big((\phi(Q_{t_i^+}^{(2)},\theta_0^{(2)} ) - \phi(Q_{t_{i-1}^-}^{(2)},\theta_0^{(2)})) - (\phi(Q_{t_i^+}^{(2)}, \widehat{\theta}^{(2)}) - \phi(Q_{t_{i-1}^-}^{(2)}, \widehat{\theta}^{(2)})) \big)$$
is not $\mathcal{F}_{t_i}$-measurable, we need to use a Taylor expansion around $\theta_0$. More specifically, let us prove (69) and in line with the notation of the cited paper, we define:
\begin{eqnarray*}
F_N(\theta) & = & \sum_{i=1}^{N^{(1)}} \underbrace{\big((\phi(Q_{t_i^+}^{(2)},\theta ) - \phi(Q_{t_{i-1}^-}^{(2)},\theta)) - (\phi(Q_{t_i^+}^{(2)}, \theta_0^{(2)}) - \phi(Q_{t_{i-1}^-}^{(2)}, \theta_0^{(2)})) \big)}_{\chi_i(\theta)} \underbrace{\int_{t_{i-1}^{(1)}}^{t_i^{(1)}} \sigma_t^{(1)} dW_t^{(1)}}_{\Delta M_i^{c,(1)}}.
\end{eqnarray*}
Note now that by the same Taylor expansion as in (\ref{mvtproof}) and the same line of reasoning, we directly get that for $\theta \in \Theta$ such that $|\theta-\theta_0| \leq K/N$, for some $\overline{\theta} \in [\theta_0, \theta]$,
\beas 
N^l|F_N(\theta) - F_N(\theta_0)|^{2l} \leq C_l N^l |\theta- \theta_0|^{2l}\l(\l|\sum_{i=1}^{N^{(1)}} \partial_\theta \chi_i(\theta_0) \Delta M_i^{c,(1)}\r|^{2l} + \l|\frac{1}{2}\sum_{i=1}^{N^{(1)}} \partial_\theta^2 \chi_i(\overline{\theta}) \Delta M_i^{c,(1)}\r|^{2l} |\theta - \theta_0|^{2l}\r).
\eeas 
Now, using that the first term is a sum of $\calh_t$-martingale increments and Burkholder-Davis-Gundy inequality yields 
\beas 
\esp \l|\sum_{i=1}^{N^{(1)}} \partial_\theta \chi_i(\theta_0) \Delta M_i^{c,(1)}\r|^{2l} &\leq& C \esp \l|\sum_{i=1}^{N^{(1)}} |\partial_\theta \chi_i(\theta_0)|^2  \Delta_i t^{(1)} \r|^l \\
&\leq& C.
\eeas 
Similarly, Jensen inequality applied to the measure $(N^{(1)})^{-1} \sum_{i=1}^{N^{(1)}}$, the boundedness of $|\partial_\theta^2\chi_i(\overline{\theta})|$, and direct calculation of moments for $\Delta M_i^{c, (1)}$ yield 
\beas 
\esp \l|\frac{1}{2}\sum_{i=1}^{N^{(1)}} \partial_\theta^2 \chi_i(\overline{\theta}) \Delta M_i^{c,(1)}\r|^{2l} &\leq& C  N^{2l-1} \esp\sum_{i=1}^{N^{(1)}} \l|\Delta M_i^{c,(1)}\r|^{2l}\\
&\leq& CN^l.
\eeas 
Combined with $|\theta- \theta_0| \leq K/N$, this gives 
\beas 
N^l\esp\sup_{\theta \in \Theta | |\theta-\theta_0| \leq K/N}|F_N(\theta) - F_N(\theta_0)|^{2l} \to 0
\eeas
which is (69) from \cite{li2016efficient}.
Then, one can proceed as in the proof of Theorem 2 (p. 46) in \cite{li2016efficient}.

\smallskip
We turn to $III$, which is slightly more complicated to deal with. We decompose the increment of the second asset in three parts and rewrite $III$ as
\begin{eqnarray*}
III & = & n^{1/2} \Big( \sum_{t_{i}^{+} < t}  \psi_i^{(1)} (\widehat{\theta}^{(1)}) (X_{t_{i}^{+}}^{(2)} - X_{t_{i}^{(1)}}^{(2)}) + \sum_{t_{i}^{+} < t}  \psi_i^{(1)} (\widehat{\theta}^{(1)}) (X_{t_{i}^{(1)}}^{(2)} - X_{t_{i-1}^{(1)}}^{(2)}) + \sum_{t_{i}^{+} < t} \psi_i^{(1)} (\widehat{\theta}^{(1)}) (X_{t_{i-1}^{(1)}}^{(2)} - X_{t_{i-1}^{-}}^{(2)}) \Big)\\
& := & n^{1/2} (III_A + III_B + III_C).
\end{eqnarray*}
The problem with $III_A$ is that it is not adapted to a simple filtration. To circumvent this difficulty, we need to rearrange the terms of the sum again. We follow \cite{potiron2017estimation} (Section 4.3) and we define the new sampling 
times $t_{i}^{1C}$ as
$t_{0}^{1C} := t_{0}^{(1)}$, 
and recursively for $i$ any nonnegative integer
\begin{eqnarray}
\label{algo1C} t_{i+1}^{1C} := \min \big\{ t_{u}^{(1)} : \text{ there exists } j \in \naturels \text{ such that } 
t_{i}^{1C} \leq t_{j}^{(2)} < t_{u}^{(1)} \big\} .
\end{eqnarray}
In analogy with (\ref{tau-0s}), (\ref{tau-0}) and (\ref{tau+0}), we introduce the following times
\begin{eqnarray}
\label{tau1C-0s} t_{0}^{1C,-} & := & 0, \\
\label{tau1C-0} t_{i-1}^{1C,-} & := & \max \{ t_{j}^{(2)} : t_{j}^{(2)} < t_{i-1}^{1C} \} \text{ for } i \geq 2\\
\label{tau1C+0} t_{i-1}^{1C,+} & := & \min \{ t_{j}^{(2)} : t_{j}^{(2)} \geq t_{i-1}^{1C} \} \text{ for } i \geq 1.
\end{eqnarray}
In light of this definition, we can rewrite $III_A$ as
\begin{eqnarray}
\nonumber III_A = \sum_{t_{i}^{1C,+} < t}  \underbrace{\big((\phi(Q_{t_i^{1C}}^{(1)}, \widehat{\theta}^{(1)}) - \phi(Q_{t_{i-1}^{1C}}^{(1)}, \widehat{\theta}^{(1)})) - (\phi(Q_{t_i^{1C}}^{(1)}, \theta_0^{(1)}) - \phi(Q_{t_{i-1}^{1C}}^{(1)}, \theta_0^{(1)})) \big) (X_{t_{i}^{1C,+}}^{(2)} - X_{t_{i}^{1C}}^{(2)})}_{M_{i}(\widehat{\theta}^{(1)})},
\end{eqnarray}
where $M_{i}(\theta)$ is $\mathcal{F}_{t_{i+1}^{1C}}$-measurable. By the mean value theorem, we also have for some $\overline{\theta} \in [\theta_0^{(1)},\widehat{\theta}^{(1)}]$ that  
\begin{eqnarray*}  
n^{1/2}\sum_{i=1}^{N^{(1)}} M_{i}(\widehat{\theta}^{(1)}) = n^{1/2}(\widehat{\theta}^{(1)} -\theta_0^{(1)})^T\sum_{i=1}^{N^{(1)}}  \partial_\theta M_i(\theta_0^{(1)}) + \frac{ n^{1/2}(\widehat{\theta}^{(1)} -\theta_0^{(1)})^T}{2}\sum_{i=1}^{N^{(1)}}  \partial_\theta^2 M_i(\overline{\theta})(\widehat{\theta}^{(1)} -\theta_0^{(1)}).
\end{eqnarray*}
Following the same line of reasoning as for the proof of (\ref{mvtproof}) in the volatility case, we can show that the two terms go to $0$ in probability, so that we have shown that $n^{1/2}III_A = o_\proba (1)$. The other two terms $III_B$ and $III_C$ do not require rearranging the terms. Specifically, $n^{1/2} III_B$ can be shown $o_\proba(1)$ following exactly the proof of Theorem 2 (p. 46) in \cite{li2016efficient}. Regarding the third term $n^{1/2} III_C$, we can show that it is $o_\proba(1)$ using a Taylor expansion similarly as for $III_A$.

\subsection{Proof of Corollary \ref{covariancestudent}}
Although the quantities introduced are quite involved to formally define $\widetilde{AB}$ and $\widetilde{AVAR}$, the proof works the same way as for the proof of (\ref{volatilitystudent2}) in Corollary \ref{volatilitystudent}, along with techniques and estimates from \cite{potiron2017estimation}.

\subsection{Proof of Theorem \ref{theoremG}}

All along this proof, we use the notations $k_n$, $\Delta_n$, $w_n$ in lieu of respectively $k$, $\Delta$ and $w$ in order to emphasize their dependence on $n$. We have to show that $n^{1/2}\l(\widehat{\Xi} - \widetilde{\Xi}'\r) = o_\proba(1)$ where 
\bea  
\widehat{\Xi} = \Delta_n \sum_{i=1}^{[T/\Delta_n]-k_n+1}\l\{g(\widehat{c}_{i}) - \inv{2k_n}\sum_{j,k,l,m=1}^d \partial_{jk,lm}^2g(\widehat{c}_{i})\l(\widehat{c}_{i}^{jl}\widehat{c}_{i}^{km}+\widehat{c}_{i}^{jm}\widehat{c}_{i}^{kl}\r)\r\},
\label{eqGEstimatorRobust}
\eea  
with 
\bea
\widehat{c}_{i}^{lm}= \inv{k_n\Delta_n}\sum_{j=0}^{k_n-1}\Delta_{i+j} \widehat{X}^l\Delta_{i+j} \widehat{X}^m \mathbf{1}_{\{\|\Delta_{i+j} \widehat{X}\| \leq w_n\}}. 
\label{defcHat}
\eea
We start by showing that we can assume without loss of generality that $X$ is continuous, i.e replace $X$ by $X'$ in all the expressions. To do so, consider $\widehat{\Xi}'$ and $\widehat{c}_i'$ the estimators applied to the continuous part $X'$ in lieu of $X$. Without loss of generality, we assume in what follows that $X$, $\widehat{\theta}$ and $\theta_0$ are $1$-dimensional quantities. The multi-dimensional case can be derived by a straightforward adaptation.  

\begin{lemma*}\label{removeJumpG}
We have 
$$ n^{1/2}\l(\widehat{\Xi} - \widehat{\Xi}^{'}\r) \to^\proba 0.$$
\end{lemma*}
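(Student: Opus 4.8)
The plan is to adapt the jump-removal argument from the proof of Theorem \ref{bipower} (the step leading to \eqref{removeJumpsBivariate}) to the present functional-estimator setting, the additional difficulty being the nonlinearity of $g$ and the bias-correction term in \eqref{eqGEstimatorRobust}. Working with $d=1$ as stated, I first record the decomposition $\Delta_i\widehat{X} = \Delta_i X' + \Delta_i J + \psi_i(\widehat{\theta})$ and $\Delta_i\widehat{X}' = \Delta_i X' + \psi_i(\widehat{\theta})$, and recall that by Assumption \textbf{(H)} the noise perturbation satisfies $|\psi_i(\widehat{\theta})| \leq C/n$ almost surely, so that its contribution is of smaller order than the diffusive increment and, crucially, does not alter the orders of the conditional moment estimates of \cite{jacod2011discretization}.

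The first step is to reduce the whole difference to a difference of truncated spot estimators. Writing $\widehat{\Xi} - \widehat{\Xi}' = \Delta_n \sum_i \left[ F(\widehat{c}_i) - F(\widehat{c}_i') \right]$, where $F$ collects $g$ and the degree-two bias-correction polynomial of \eqref{eqGEstimatorRobust}, the polynomial growth \eqref{momentG} of $g$ and its derivatives, the mean value theorem, and Cauchy--Schwarz give
\[
\esp\left| F(\widehat{c}_i) - F(\widehat{c}_i') \right| \leq C \left( \esp\left[ 1 + |\widehat{c}_i|^{2(p-1)} + |\widehat{c}_i'|^{2(p-1)} \right] \right)^{1/2} \left( \esp\left| \widehat{c}_i - \widehat{c}_i' \right|^2 \right)^{1/2}.
\]
Since the truncated spot estimators $\widehat{c}_i$ and $\widehat{c}_i'$ have all moments bounded uniformly in $i$, the first factor is $O(1)$, and the bias-correction contribution carries an extra $1/k_n$ so is handled identically; thus everything reduces to controlling $\esp|\widehat{c}_i - \widehat{c}_i'|^2$.

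The core estimate is then a per-increment $L^q$ bound on the truncated squared increments. Here $\widehat{c}_i - \widehat{c}_i'$ is the window average $(k_n\Delta_n)^{-1}\sum_{j=0}^{k_n-1}\left[\Delta_{i+j}\widehat{X}^2\mathbf{1}_{\{|\Delta_{i+j}\widehat{X}|\leq w_n\}} - \Delta_{i+j}\widehat{X}'^2\mathbf{1}_{\{|\Delta_{i+j}\widehat{X}'|\leq w_n\}}\right]$, and I would reproduce, with $\psi_i(\widehat{\theta})$ adjoined to the continuous martingale increment, the estimates of Lemma 13.2.6 in \cite{jacod2011discretization}: the finitely many large jumps are removed by the truncation at level $w_n = \alpha\Delta_n^{\bar{\omega}}$ for $n$ large, while the small-jump contribution is governed by the jump index $r\in[0,1)$ exactly as in the bipower proof, yielding a bound of the order $C a_n \Delta_n^{(2q-r)\bar{\omega}+1}$ for a deterministic $a_n\to 0$. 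Summing this over the $k_n$ terms per window and the $\sim[T/\Delta_n]$ windows, multiplying by the Riemann weight $\Delta_n$ and the normalization $n^{1/2}$, and using $k_n^2\Delta_n\to 0$, $k_n^3\Delta_n\to\infty$ for the windowing, yields $n^{1/2}(\widehat{\Xi}-\widehat{\Xi}')\to^\proba 0$.

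The main obstacle is this final bookkeeping. Propagating the truncated-increment estimate through the growth-$p$ nonlinearity $g$ forces high moments of the spot estimators and a correspondingly large exponent $q$, so one must verify that the interplay between $p$, $r$, and the admissible range of $\bar{\omega}$ still leaves the power of $\Delta_n$ positive after the $n^{1/2}$ factor. This is precisely where the hypothesis \eqref{momentG} with $p\geq 3$ and the lower bound $\bar{\omega}\geq\frac{2p-1}{2(2p-r)}$ enter, and checking the compatibility of these exponents is the delicate point of the argument.
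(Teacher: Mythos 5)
Your first two steps coincide with the paper's: the same decomposition absorbing $\psi_i(\widehat{\theta})$ into the drift (with $|\psi_i(\widehat{\theta})| \leq C/n$ almost surely), and the same adaptation of Lemma 13.2.6 of \cite{jacod2011discretization} under the enlarged filtration $\calg_i = \calf_i \vee \sigma\{Q_{t_i}, 0 \leq i \leq n\}$, yielding the per-increment bound $Ca_n\Delta_n^{(2q-r)\bar{\omega}+1}$ and hence $\esp|\widehat{c}_i - \widehat{c}_i'|^q \leq Ca_n\Delta_n^{(2q-r)\bar{\omega}+1-q}$, which is exactly (\ref{removeJump1})--(\ref{removeJump2}). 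The gap is in your concluding bookkeeping, and as written it is fatal: the Cauchy--Schwarz reduction ``everything reduces to controlling $\esp|\widehat{c}_i - \widehat{c}_i'|^2$'' does not close. Taking $q=2$ in the estimate gives $(\esp|\widehat{c}_i - \widehat{c}_i'|^2)^{1/2} \leq Ca_n^{1/2}\Delta_n^{(2-r/2)\bar{\omega}-1/2}$; summing the $\sim T/\Delta_n$ windows with Riemann weight $\Delta_n$ and multiplying by $n^{1/2}\sim \Delta_n^{-1/2}$ leaves $Ca_n^{1/2}\Delta_n^{(2-r/2)\bar{\omega}-1}$, whose exponent is strictly negative for every admissible $\bar{\omega} < 1/2$ (since $(2-r/2)\bar{\omega} < 1-r/4 \leq 1$), and the unquantified $a_n \to 0$ cannot compensate a negative power of $\Delta_n$. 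No $q \geq 2$ helps, and the window conditions $k_n^2\Delta_n\to 0$, $k_n^3\Delta_n\to\infty$ you invoke are irrelevant to this step, as $k_n$ cancels in the window averaging.

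There are two ways to close the argument, and your proposal contains neither. The paper's route is to verify, in addition to (\ref{removeJump2}), that the conditional bias estimate of \cite{jacod2013quarticity} survives the perturbation $\psi_i(\widehat{\theta})$, namely $|\esp[\alpha_i \mid \calg_i]| \leq C\Delta_n^{3/2}$ with $\alpha_i = |\Delta_i\widehat{X}'|^2 - \sigma_{t_i}^2\Delta_n$ (this is (\ref{removeJump3})), and then to invoke the proof of Lemma 4.4 (case $v=1$) in \cite{jacod2013quarticity}, where the first-order Taylor term of $g$ is handled precisely through this conditional estimate; this second key input is entirely absent from your plan. Alternatively, your brute-force route can be rescued by replacing Cauchy--Schwarz with H\"{o}lder at exponent $q = 1+\epsilon$ close to $1$ on the difference factor (the conjugate high moment of the mean-value derivative being available from polynomial growth and uniform moments of the spot estimators): because the ``$+1$'' in the exponent $(2q-r)\bar{\omega}+1$ does not scale with $q$, the per-window bound improves to order $\Delta_n^{(2-r/q)\bar{\omega}+1/q-1}$, and as $q \downarrow 1$ the final exponent tends to $(2-r)\bar{\omega} - 1/2$, which is strictly positive under the standing condition $\bar{\omega} \geq \frac{2p-1}{2(2p-r)}$ since $(2-r)(2p-1)-(2p-r) = 2(p-1)(1-r) > 0$. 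Either repair must actually be carried out: the exponent compatibility you defer to the end is not a routine check but the point where your stated reduction fails.
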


\begin{proof}
Recall that we have the key decomposition
\bea 
\Delta_i \widehat{X} = \Delta_iX(\widehat{\theta}) = \underbrace{\Delta_i\breve{B} +  \psi_i(\widehat{\theta})}_{\Delta_iB^{'}} + \Delta_i M^c + \Delta_i J,
\label{keyDecomposition}
\eea 
%and by assumption \textbf{(H)}, also recall that we have $ |\psi_i(\widehat{\theta})| \leq \l|\sup_{\theta \in \Theta}\partial_\theta \psi_i(\theta) \r||\widehat{\theta}-\theta_0| \leq C/n$ almost surely. Thus, remark that all usual conditional moment estimates for $\Delta_i B$ are also true for $\Delta_i B^{'}$. 
where we recall that $\breve{B}_t = \int_0^t b_s' ds$. Now, we apply exactly the same line of reasoning as for the proof of Theorem \ref{bipower}. We replace again $\Delta_i \breve{B}$ by $\Delta_i B^{'}$ and $\calf_i$ by $\calg_i = \calf_i \vee \sigma\{Q_{t_i}, 0  \leq i \leq n\}$ in the proof of Lemma 13.2.6 (p. 384) in \cite{jacod2011discretization}, all the conditional estimates are preserved and thus the lemma remains valid in the presence of the term $\psi_i(\widehat{\theta})$. Applied with $F(x) = x^2$, $k=1$, $p' = s'  =2$, $s=1$ and $\theta = 0$, this directly yields that for all $q \geq 1$ and for some deterministic sequence $a_n$ shrinking to 0, we have that 
\bea 
\esp\l||\Delta_i \widehat{X}|^2 \ind{|\Delta_i\widehat{X}| \leq w_n} - |\Delta_i \widehat{X}'|^2 \ind{|\Delta_i\widehat{X}'| \leq w_n} \r|^q \leq  Ca_n \Delta_n^{(2q-r)\bar{\omega} + 1}.
\label{removeJump1}
\eea
As a by-product, we also deduce
\bea 
\esp\l| \widehat{c}_i - \widehat{c}_i'\r|^q \leq C a_n \Delta_n^{(2q-r)\bar{\omega} + 1 - q}.
\label{removeJump2}
\eea 
 Moreover, replacing again $\calf_i$ by $\calg_i$ and $\Delta_i \breve{B}$ by $\Delta_i B'$ in the calculation we can also see that the second inequality of (4.10) in \cite{jacod2013quarticity} remains true in the presence of $\psi_i(\widehat{\theta})$, that is, introducing $\alpha_i = |\Delta_i \widehat{X}'|^2 - \sigma_{t_i}^2 \Delta_n$, we have 
\bea 
\l|\esp[\alpha_i | \calg_i]\r| \leq C \Delta_n^{3/2}.
\label{removeJump3}
\eea 
Now, remark that by the proof of Lemma 4.4 (p. 1479, case $v=1$) in \cite{jacod2013quarticity},  $ n^{1/2}\big(\widehat{\Xi} - \widehat{\Xi}^{'}\big) \to^\proba 0$ is an immediate consequence of our estimates (\ref{removeJump2}) and (\ref{removeJump3}), along with the polynomial condition (\ref{momentG}) on $g$.
\end{proof}

From now on, by virtue of Lemma \ref{removeJumpG}, we only have to prove $n^{1/2}(\widehat{\Xi}' - \widetilde{\Xi}') \to^\proba 0$. We now want to show that in the definition of $\widehat{\Xi}'$, we can substitute $\widehat{c}_{i}'$ by $\overline{c}_{i}'$, where 
\bea
\overline{c}_{i}'^{lm}= \inv{k_n\Delta_n}\sum_{j=0}^{k_n-1}\Delta_{i+j} \widehat{X}'^l\Delta_{i+j} \widehat{X}'^m \mathbf{1}_{\{|\Delta_{i+j} X'| \leq w_n\}},
\label{defcBar}
\eea
that is when the indicator function is applied to $X'$ itself instead of $\widehat{X}'$. We first state a technical lemma.

\begin{lemma*}\label{lemmaBoundc}
We have, for any $i \in \{1,\cdots,n\}$, any $j \in \{1,\cdots,3\}$, and any $q \geq 1$, 
\beas 
  \esp|\partial^j g(\widehat{c}_i')|^q  \leq C \textnormal{ and } \esp|\partial^j g(\overline{c}_i')|^q  \leq C.
\eeas

\end{lemma*}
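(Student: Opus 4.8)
The plan is to reduce the statement, via the polynomial growth hypothesis (\ref{momentG}), to a single uniform moment bound on the diagonal entries of the spot estimators, and then to obtain that bound by a crude domination in which the truncation is simply discarded. First, since $|\partial^j g(x)| \leq K(1+\|x\|^{p-j})$ for $j=0,\ldots,3$, the elementary inequality $(a+b)^q \leq C(a^q+b^q)$ gives $\esp|\partial^j g(\widehat{c}_i)|^q \leq C\l(1 + \esp\|\widehat{c}_i\|^{(p-j)q}\r)$, and likewise for $\overline{c}_i$. Because we may assume $J=0$ (so that $X=X'$), it therefore suffices to show that for every exponent $q' \geq 1$ one has $\esp\|\widehat{c}_i\|^{q'} \leq C$ and $\esp\|\overline{c}_i\|^{q'} \leq C$ uniformly in $i$ and $n$; applying this with $q' = (p-j)q$ then closes the argument.

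Next I would reduce the matrix norm to the diagonal entries. Within each truncated sum in (\ref{defcHat}) and (\ref{defcBar}), Cauchy--Schwarz yields $|\widehat{c}_i^{lm}| \leq (\widehat{c}_i^{ll}\widehat{c}_i^{mm})^{1/2}$ and the same for $\overline{c}_i$, so that $\|\widehat{c}_i\| \leq C \max_l \widehat{c}_i^{ll}$ and $\|\overline{c}_i\| \leq C \max_l \overline{c}_i^{ll}$ up to a dimensional constant. It is thus enough to bound $\esp(\widehat{c}_i^{ll})^{q'}$ and $\esp(\overline{c}_i^{ll})^{q'}$. At this point the two estimators are handled simultaneously: the diagonal entries are nonnegative and the two different truncations ($\ind{\|\Delta_{i+j}\widehat{X}\|\leq w_n}$ versus $\ind{\|\Delta_{i+j} X\|\leq w_n}$) can only delete terms, so in both cases
$$0 \leq \widehat{c}_i^{ll},\ \overline{c}_i^{ll} \leq \inv{k_n\Delta_n}\sum_{j=0}^{k_n-1}\l(\Delta_{i+j}\widehat{X}^l\r)^2.$$
Applying the power-mean (Jensen) inequality to pull the power $q'$ inside the average over $j$, I obtain
$$\esp\l(\widehat{c}_i^{ll}\r)^{q'} \leq \Delta_n^{-q'}\,\inv{k_n}\sum_{j=0}^{k_n-1}\esp\l|\Delta_{i+j}\widehat{X}^l\r|^{2q'},$$
with the identical bound for $\overline{c}_i^{ll}$.

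Finally, the increment moments are controlled exactly as in (\ref{devContinuousTheta}): writing $\Delta_{i+j}\widehat{X} = \Delta_{i+j}X' + \psi_{i+j}(\widehat{\theta})$ with $|\psi_{i+j}(\widehat{\theta})| \leq C/n$ by Assumption \textbf{(H)}, the Burkh\"{o}lder--Davis--Gundy inequality together with boundedness of the drift and of the volatility gives $\esp|\Delta_{i+j}\widehat{X}^l|^{2q'} \leq C(\Delta_n^{q'} + \Delta_n^{2q'}) \leq C\Delta_n^{q'}$. Substituting this makes the normalizing factor $\Delta_n^{-q'}$ cancel exactly, leaving $\esp(\widehat{c}_i^{ll})^{q'} \leq C$ and $\esp(\overline{c}_i^{ll})^{q'} \leq C$ uniformly, which combined with the first two steps proves the lemma. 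The point requiring the most care is this exact cancellation of the $\Delta_n^{-q'}$ coming from the $1/(k_n\Delta_n)$ normalization against the $O(\Delta_n^{q'})$ increment moments; the truncation, by contrast, is no obstacle at all, since for an upper bound it is merely thrown away, and the large exponent $p$ in the growth condition causes no trouble precisely because the diagonal moment bound is available for \emph{every} order.
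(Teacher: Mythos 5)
Your proof is correct, but it takes a genuinely different route from the paper's. The paper proves the reduction to moment bounds on the spot estimators exactly as you do (via (\ref{momentG})), but then controls $\widehat{c}_i$ through the triangle-inequality decomposition $|\widehat{c}_i|^q \leq C\left(|\widehat{c}_i-\overline{c}_i|^q+|\overline{c}_i-\widetilde{c}_i|^q+|\widetilde{c}_i|^q\right)$: the term $\esp|\widetilde{c}_i|^q \leq C$ is imported from (4.11) in \cite{jacod2013quarticity} together with the boundedness of $c$, the indicator-swap term $\widehat{c}_i-\overline{c}_i$ is handled via the estimate (\ref{devIndicator}) (with $\beta$ large) plus Cauchy--Schwarz and Jensen, and the cross terms in $\overline{c}_i-\widetilde{c}_i$ via Jensen and $|\psi_i(\widehat{\theta})|\leq C/n$. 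You bypass all of this: since the diagonal entries are nonnegative, you simply discard the truncation indicators, pull the power inside by Jensen, and use Burkh\"{o}lder--Davis--Gundy so that the $\Delta_n^{-q'}$ from the normalization cancels against the $O(\Delta_n^{q'})$ increment moments --- a shorter, self-contained argument that treats $\widehat{c}_i$ and $\overline{c}_i$ simultaneously and even keeps the multidimensional case, where the paper reduces WLOG to dimension one. The trade-off is worth noting: your argument leans essentially on the standing assumption $J=0$ (in force here by Lemma \ref{removeJumpG}, and you correctly invoked it), because with jumps one only has $\esp|\Delta_i X|^{2q'}=O(\Delta_n)$, so discarding the truncation would yield the useless bound $O(\Delta_n^{1-q'})$; the paper's route, which keeps the truncation and goes through the external estimate for $\widetilde{c}_i$, is the shape of argument that would survive in a jump setting, and its intermediate $\mathbb{L}_q$ bounds on $\widehat{c}_i-\overline{c}_i$ and $\overline{c}_i-\widetilde{c}_i$ prefigure computations reused later (e.g., (\ref{devIndicator}) reappears in Lemma \ref{lemmaReplaceIndicator}). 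As a standalone proof of the lemma as stated, however, your version is complete and arguably cleaner.
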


\begin{proof}
In view of (\ref{momentG}), it is sufficient to prove that for any $q \geq 1$,
\beas 
\esp|  \widehat{c}_i' |^q \leq C\textnormal{ and }\esp|\overline{c}_i' |^q \leq C.
\eeas 
Moreover, since $|  \widehat{c}_i' |^q \leq C( |\widehat{c}_i'  -  \overline{c}_i'|^q + |\overline{c}_i'  -  \widetilde{c}_i|^q +|\widetilde{c}_i |^q  )$, and as $\esp |\widetilde{c}_i|^q \leq C$ as an easy consequence of (4.11) in \cite{jacod2013quarticity} (p. 1476) and the boundedness of $c$ in Assumption \textbf{(H)}, it suffices to show the $\mathbb{L}_q$ boundedness of 
 \begin{eqnarray}
 \label{proof201712001924}
 \widehat{c}_i'  -  \overline{c}_i' = \inv{k_n\Delta_n}\sum_{j=0}^{k_n-1}|\Delta_{i+j} \widehat{X}'|^2\l(\mathbf{1}_{\{|\Delta_{i+j} \widehat{X}'| \leq w_n\}} - \mathbf{1}_{\{|\Delta_{i+j}  X'| \leq w_n\}}\r)
 \end{eqnarray}
and 
\begin{eqnarray}
  \overline{c}_i'-\widetilde{c}_i   &\leq&  \frac{2}{k_n\Delta_n}\sum_{j=0}^{k_n-1}\Delta_{i+j}X'\psi_{i+j}(\widehat{\theta})\mathbf{1}_{\{|\Delta_{i+j}  X'| \leq w_n\}} + \inv{k_n\Delta_n}\sum_{j=0}^{k_n-1} \psi_{i+j}(\widehat{\theta})^2, \label{proof20171201925}\\
&:=& I + II. \nonumber
\end{eqnarray}
We first show the $\mathbb{L}_q$ boundedness of (\ref{proof201712001924}). First recall that in (\ref{devIndicator}) we proved that
\beas  
\esp \l|\mathbf{1}_{ \{\mid \Delta_i \widehat{X}' \mid \leq w_n  \} } - \mathbf{1}_{ \{\mid \Delta_i X' \mid \leq w_n  \} }\r|  \leq n^{-\beta}
%\label{devIndicator}
\eeas
for any $\beta >0$. %Indeed, introducing $\breve{\Delta}$ the symmetric difference operator, we have
%\beas 
%\esp \l|\mathbf{1}_{ \{\mid \Delta_i \widehat{X} \mid \leq w_n  \} } - \mathbf{1}_{ \{\mid \Delta_i X \mid \leq w_n  \} }\r|  &=&  \esp \mathbf{1}_{ \{\mid \Delta_i \widehat{X} \mid \leq w_n  \} \breve{\Delta} \{\mid \Delta_i  X \mid \leq w_n  \}}, \\
%&\leq& \esp \ind{|\Delta_iX -w_n| \leq |\psi_i(\widehat{\theta})|},\\
%&\leq& \esp \ind{|\Delta_iX -w_n| \leq C/n}.\\
%\eeas 
%Now, letting $\gamma \in (\bar{\omega},1/2)$ and $q >0$, since $\{|\Delta_iX -w_n| \leq C/n\} \cap \{|\Delta_iX | \leq n^{-\gamma}\} = \emptyset$ for $n$ large enough, we automatically have
%\beas 
%\esp \ind{|\Delta_iX -w_n| \leq C/n} &\leq&  \esp \ind{|\Delta_iX | > n^{-\gamma}}\\
%&\leq& n^{\gamma q}\esp  |\Delta_iX |^q\\
%&\leq& Cn^{q(\gamma-1/2)},
%\eeas 
%and taking $q$ large enough we get the result. 
Thus, by Cauchy-Schwarz inequality and Jensen's inequality we easily get that $\esp |\widehat{c}_i'  -  \overline{c}_i'|^q \leq C$ considering $\beta$ large enough. 

We prove now the $\mathbb{L}_q$ boundedness of (\ref{proof20171201925}). By Jensen's inequality applied to $$|k_n^{-1}\sum_{j=0}^{k_n-1}\Delta_{i+j}X'\psi_{i+j}(\widehat{\theta})|^q,$$ we have 
\beas 
\esp |I|^q &\leq& \frac{Cn^{q}}{k_n} \sum_{j=0}^{k_n-1}\esp|\Delta_{i+j}X'|^q\underbrace{|\psi_{i+j}(\widehat{\theta})|^q}_{C/n^q}\\
&\leq& C n^{-q/2}.
\eeas 
For $II$ we have 
\beas 
\esp |II|^q &\leq& \frac{Cn^{q}}{k_n}\esp\sum_{j=0}^{k_n-1} |\psi_{i+j}(\widehat{\theta})|^{2q}\\
&\leq& C n^{-q},
\eeas 
and thus this yields the $\mathbb{L}_q$ boundedness of $\overline{c}_i'-\widetilde{c}_i$, which concludes the proof. 
\end{proof}

\begin{lemma*}\label{lemmaReplaceIndicator}
Let $\overline{\Xi}'$ be defined as $\widehat{\Xi}'$ where $\widehat{c}_{i}'$ is replaced by $\overline{c}_{i}'$. Then 
$$ n^{1/2}\l(\widehat{\Xi}' - \overline{\Xi}'\r) \to^\proba 0.$$
\end{lemma*}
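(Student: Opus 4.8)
The plan is to establish the stronger $\mathbb{L}_1$ bound $n^{1/2}\esp\l|\widehat{\Xi} - \overline{\Xi}\r| \to 0$, from which the stated convergence in probability follows by Markov's inequality. Invoking the reduction already made in the proof of Lemma \ref{removeJumpG}, I work throughout with $d=1$ and $J=0$, so that $\widehat{\Xi} = \Delta_n\sum_i\l\{g(\widehat{c}_i) - \frac{1}{k_n}\partial^2 g(\widehat{c}_i)\widehat{c}_i^2\r\}$ and $\overline{\Xi}$ is the same expression with $\widehat{c}_i$ replaced by $\overline{c}_i$. Writing
$$\widehat{\Xi} - \overline{\Xi} = \Delta_n\sum_i\l\{\l(g(\widehat{c}_i) - g(\overline{c}_i)\r) - \frac{1}{k_n}\l(\partial^2 g(\widehat{c}_i)\widehat{c}_i^2 - \partial^2 g(\overline{c}_i)\overline{c}_i^2\r)\r\},$$
the problem reduces to controlling in $\mathbb{L}_1$ the two families of summands $g(\widehat{c}_i) - g(\overline{c}_i)$ and $\partial^2 g(\widehat{c}_i)\widehat{c}_i^2 - \partial^2 g(\overline{c}_i)\overline{c}_i^2$.

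The core of the argument is a sharpening of the computation already carried out in Lemma \ref{lemmaBoundc}: I would prove that for every $q\geq 1$ and every $\beta>0$,
$$\esp\l|\widehat{c}_i - \overline{c}_i\r|^q \leq Cn^{-\beta/2}.$$
Starting from the explicit expression (\ref{proof201712001924}) for $\widehat{c}_i - \overline{c}_i$ and applying Jensen's inequality, I bound $\esp|\widehat{c}_i - \overline{c}_i|^q$ by $\frac{1}{k_n}\sum_{j=0}^{k_n-1}\Delta_n^{-q}\,\esp\l[|\Delta_{i+j}\widehat{X}|^{2q}\l|\ind{|\Delta_{i+j}\widehat{X}|\leq w_n} - \ind{|\Delta_{i+j}X|\leq w_n}\r|\r]$, where I used that the indicator difference equals its own $q$-th power. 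A Cauchy--Schwarz split then pairs the moment bound $\esp|\Delta_{i+j}\widehat{X}|^{4q}\leq C\Delta_n^{2q}$ (which follows from Burkh\"{o}lder--Davis--Gundy, the boundedness of the coefficients under \textbf{(H)}, and $|\psi_{i+j}(\widehat{\theta})|\leq C/n$) against the indicator estimate (\ref{devIndicator}), giving $C\Delta_n^q n^{-\beta/2}$ for each $j$; summing over $j$ and cancelling $\Delta_n^{\pm q}$ yields the claim.

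With this decay in hand the conclusion is routine. By the mean value theorem and the polynomial growth (\ref{momentG}) of $\partial g$, $|g(\widehat{c}_i) - g(\overline{c}_i)| \leq C(1 + |\widehat{c}_i|^{p-1} + |\overline{c}_i|^{p-1})\,|\widehat{c}_i - \overline{c}_i|$; H\"{o}lder's inequality combined with the $\mathbb{L}_q$ bounds of Lemma \ref{lemmaBoundc} and the key estimate then gives $\esp|g(\widehat{c}_i) - g(\overline{c}_i)| \leq Cn^{-\beta/4}$, and expanding $\partial^2 g(\widehat{c}_i)\widehat{c}_i^2 - \partial^2 g(\overline{c}_i)\overline{c}_i^2$ and invoking the growth of $\partial^2 g$ and $\partial^3 g$ furnishes the same bound for the correction summands. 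Since there are $O(\Delta_n^{-1})$ summands, each weighted by $\Delta_n$, I obtain $n^{1/2}\esp|\widehat{\Xi} - \overline{\Xi}| \leq Cn^{1/2 - \beta/4}$ (the correction term being even smaller thanks to the extra factor $1/k_n$), which tends to $0$ once $\beta > 2$. The one point requiring care is ensuring that the super-polynomial smallness coming from (\ref{devIndicator}) is not destroyed by the diverging factor $\Delta_n^{-q}$ and the polynomial growth weights of $g$; this is precisely what the Cauchy--Schwarz split above---pairing the fourth-order moment of the increment against the indicator difference---is designed to guarantee, and it is the only genuinely delicate step.
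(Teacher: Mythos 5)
Your proposal is correct and takes essentially the same approach as the paper's proof: both rest on the mean value theorem together with the $\mathbb{L}_q$ bounds of Lemma \ref{lemmaBoundc} (or its straightforward adaptation to the intermediate point), and on a Cauchy--Schwarz pairing of the moment bound for $\Delta_i \widehat{X}$ against the super-polynomial indicator estimate (\ref{devIndicator}), after the reduction to $J=0$ via Lemma \ref{removeJumpG}. The only cosmetic difference is that you package the key computation as a standalone decay estimate $\esp|\widehat{c}_i - \overline{c}_i|^q \leq C n^{-\beta/2}$ before invoking the mean value theorem at the level of the spot estimators, whereas the paper applies the mean value theorem first and runs the same Cauchy--Schwarz split inline at the increment level.
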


\begin{proof}
We have 
\begin{eqnarray} 
\label{proof201712011018}n^{1/2}\l(\widehat{\Xi}' - \overline{\Xi}'\r) &=& n^{1/2}\Delta_n \sum_{i=1}^{[T/\Delta_n]-k_n+1}\l\{g(\widehat{c}_{i}') - g(\overline{c}_{i}') \r\} \\
\nonumber &+& \frac{n^{1/2}\Delta_n}{2k_n}\sum_{i=1}^{[T/\Delta_n]-k_n+1} \l\{ h(\overline{c}_{i}')- h(\widehat{c}_{i}')\r\},
\end{eqnarray}
with $h(x) =   2\partial^2g(x)x^2$, so that proving our claim boils down to showing that both terms in the right-hand side of (\ref{proof201712011018}) are negligible. For the first one, we have  
\beas 
\sum_{i=1}^{[T/\Delta_n]-k_n+1}\l|g(\widehat{c}_{i}') - g(\overline{c}_{i}')  \r| &\leq&   \inv{k_n\Delta_n}\sum_{i=1}^{[T/\Delta_n]-k_n+1}\sum_{j=0}^{k_n-1} \l|\partial g(a_{i,j})\r|   |\Delta_{i+j} \widehat{X}'|^2 \l|\mathbf{1}_{\{\l|\Delta_{i+j} \widehat{X}'\r| \leq w_n\}} - \mathbf{1}_{\{\l|\Delta_{i+j} X'\r| \leq w_n\}}\r|   %\\&\leq&\inv{k_n\Delta_n}\l(\sum_{i=1}^{[T/\Delta_n]-k_n+1}\sum_{j=0}^{k_n-1} \l|\partial g(a_{i,j})\r|^2 \r)^{1/2}   \times \\
%&\times& \l(\sum_{i=1}^{[T/\Delta_n]-k_n+1}\sum_{j=0}^{k_n-1}\Delta_{i+j} \widehat{X}^4 \l|\mathbf{1}_{\{\l|\Delta_{i+j} \widehat{X}\r| \leq w_n\}} - \mathbf{1}_{\{\l|\Delta_{i+j} X\r| \leq w_n\}}\r|^2\r)^{1/2},
\eeas 
for some (random) $a_{i,j}$ such that $|a_{i,j}| \leq |\widehat{c}_i'| +  |\overline{c}_i'|$ by the mean value theorem. Now, by Lemma \ref{lemmaBoundc} and the fact that $g$ is of polynomial growth we get $\esp|\partial g(a_{i,j})|^q \leq C$ for any $q \geq 1$, and thus by   Cauchy-Schwarz inequality we will have 
$$n^{1/2}\Delta_n \sum_{i=1}^{[T/\Delta_n]-k_n+1}\l\{g(\widehat{c}_{i}') - g(\overline{c}_{i}') \r\} \to^\proba 0$$
if we can prove that     
%\beas  
%\sum_{i=1}^{[T/\Delta_n]-k_n+1}\sum_{j=0}^{k_n-1} \l|\partial g(a_{i,j})\r|^2 = O_\proba(nk_n).
%\eeas 
$$ \sum_{i=1}^{[T/\Delta_n]-k_n+1}\sum_{j=0}^{k_n-1} \l(\esp\l[|\Delta_{i+j} \widehat{X}'|^{4} \l|\mathbf{1}_{\{\l|\Delta_{i+j} \widehat{X}\r| \leq w_n\}} - \mathbf{1}_{\{|\Delta_{i+j} X'| \leq w_n\}}\r|\r]\r)^{1/2} = o(k_nn^{-1/2}),$$
i.e. that 
$$ \sum_{i=1}^{[T/\Delta_n]-k_n+1}\l(\esp\l[|\Delta_{i } \widehat{X}'|^{4} \l|\mathbf{1}_{\{\l|\Delta_{i } \widehat{X}'\r| \leq w_n\}} - \mathbf{1}_{\{|\Delta_{i} X'| \leq w_n\}}\r|\r]\r)^{1/2} = o( n^{-1/2}).$$
Recalling $|\Delta_i\widehat{X}'|^{4} \leq C(|\Delta_i X'|^{4} +   |\psi_i(\widehat{\theta}) |^{4} )$, we have that 
$$ \sum_{i=1}^{[T/\Delta_n]-k_n+1}  \l(\esp\l[|\Delta_{i }  X'|^{4} \l|\mathbf{1}_{\{\l|\Delta_{i } \widehat{X}'\r| \leq w_n\}} - \mathbf{1}_{\{|\Delta_{i} X'| \leq w_n\}}\r|\r]\r)^{1/2} = O (n^{   -\beta/4}) = o( n^{-1/2})$$ since $\beta$ can be taken arbitrary big, using again Cauchy-Schwarz inequality along with the fact that $\esp |\Delta_{i } X'|^q\leq C n^{-q/2}$, and (\ref{devIndicator}). %For the second term, note that on the set $\{|\Delta_{i } X| \leq w_n\} \Delta \{|\Delta_{i } \widehat{X}| \leq w_n\} \subset \{|\Delta_{i } X| \leq w_n\} \cup \{|\Delta_{i } \widehat{X}| \leq w_n\},$ we can assume without loss of generality that $|\Delta_iJ| \leq 3w_n$, because we can easily show as in the proof of Lemma \ref{lemmaindicator} that   $\proba \l[\{\Delta_i X > w_n\} \cup  \{\psi_i(\widehat{\theta}) > w_n\} \r] = O(n^{-\beta})$ for any $\beta >0$. Thus, we have 
%$$ \sum_{i=1}^{[T/\Delta_n]-k_n+1}\l(\esp\l[|\Delta_{i } J|^{2(1+\epsilon)} \l|\mathbf{1}_{\{|\Delta_{i } \widehat{X}| \leq w_n\}} - \mathbf{1}_{\{|\Delta_{i } X| \leq w_n\}}\r|\r]\r)^{1/(1+\epsilon)} = O\l( n^{-2\bar{\omega}}n^{-\alpha/(1+\epsilon)}\r),$$
%which is $o(n^{-1/2})$ taking $\epsilon$ close to $0$. 
Finally, it is immediate to prove
$$ \sum_{i=1}^{[T/\Delta_n]-k_n+1}\l(\esp\l[|\psi_i(\widehat{\theta})|^{4} \l|\mathbf{1}_{\{|\Delta_{i } \widehat{X}'| \leq w_n\}} - \mathbf{1}_{\{|\Delta_{i } X'| \leq w_n\}}\r|\r]\r)^{1/2} = o\l(n^{-1/2}\r),$$
given that $|\psi_i(\widehat{\theta})|^{4} \leq K/n^4$. The second term on the right-hand side of (\ref{proof201712011018}) is proved in the same way.
\end{proof}
 In the $1$-dimensional setting, we now introduce the following notation for $\theta \in \Theta$:
$$ c_i'(\theta) = \inv{k_n\Delta_n}\sum_{j=0}^{k_n-1}|\Delta_{i+j} X'(\theta)|^2 1_{\{|\Delta_{i+j} X' | \leq w_n\}},$$
where we recall that for any $i \in \{1,\cdots,n\}$, $\Delta_iX'(\theta) = \Delta_iX' + \psi_i(\theta)$. Note that $\overline{c}_{i}'  = c_i'(\widehat{\theta})$, and $\widetilde{c}_{i}  = c_i'( \theta_0 )$. We define 
$$E_n := n^{1/2}\Delta_n\sum_{i=1}^{[T/\Delta_n]-k_n+1}\l\{g(\overline{c}_{i}') - g(\widetilde{c}_{i}) \r\}.$$
By the mean value theorem along with the chain rule we have for some $\overline{\theta} \in [\theta_0,\widehat{\theta}]$,  
\beas 
E_n &=& \frac{2n^{1/2}}{k_n }(\widehat{\theta} - \theta_0)\sum_{i=1}^{[T/\Delta_n]-k_n+1} \partial g(\widetilde{c}_{i})\sum_{j=0}^{k_n-1} \Delta_{i+j} X' \partial_\theta \psi_{i+j}(\theta_0)1_{\{|\Delta_{i+j} X'| \leq w_n\}}\\
&+& \frac{n^{1/2}}{k_n}(\widehat{\theta} - \theta_0)^2\sum_{i=1}^{[T/\Delta_n]-k_n+1}\partial g(c_i'(\overline{\theta}))\sum_{j=0}^{k_n-1}\Delta_{i+j} X'(\overline{\theta}) \partial_\theta^2 \psi_{i+j}(\overline{\theta})1_{\{|\Delta_{i+j} X' | \leq w_n\}} \\
&+& \frac{n^{1/2}}{k_n}(\widehat{\theta} - \theta_0)^2\sum_{i=1}^{[T/\Delta_n]-k_n+1}\partial g(c_i'(\overline{\theta}))\sum_{j=0}^{k_n-1}  \partial_\theta \psi_{i+j}(\overline{\theta})^21_{\{|\Delta_{i+j} X'| \leq w_n\}}\\
&+&\frac{2n^{1/2}}{k_n^2\Delta_n}(\widehat{\theta} - \theta_0)^2 \sum_{i=1}^{[T/\Delta_n]-k_n+1}\partial^2g(c_i'(\overline{\theta}))\l\{\sum_{j=0}^{k_n-1} \Delta_{i+j} X'(\overline{\theta}) \partial_\theta \psi_{i+j}(\overline{\theta})1_{\{|\Delta_{i+j} X' | \leq w_n\}}\r\}^2,\\
&:=&I + II + III + IV.
\eeas 
We now show that each term is $o_\proba(1)$.

\begin{lemma*}\label{lemmaTermI}
We have 
$$I = \frac{2n^{1/2}}{k_n }(\widehat{\theta} - \theta_0)\sum_{i=1}^{[T/\Delta_n]-k_n+1} \partial g(\widetilde{c}_{i})\sum_{j=0}^{k_n-1} \Delta_{i+j} X' \partial_\theta \psi_{i+j}(\theta_0)1_{\{|\Delta_{i+j} X' | \leq w_n\}} \to^\proba 0.$$
\end{lemma*}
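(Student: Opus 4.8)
The plan is to factor out the scalar prefactor $\frac{2n^{1/2}}{k_n}(\widehat\theta-\theta_0)$ and reduce the claim to an $\mathbb{L}_1$ estimate on the remaining double sum; the crucial point is that a crude absolute-value bound is just barely insufficient, so the martingale orthogonality of the price increments must be exploited.

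First I would invoke Assumption $\mathbf{(H)}$, under which $\|\widehat\theta-\theta_0\|\le K/n$ almost surely, to bound the prefactor deterministically by $\frac{2n^{1/2}}{k_n}\|\widehat\theta-\theta_0\|\le\frac{2K}{k_n n^{1/2}}$. It then suffices to show that
$$S_n:=\sum_{i=1}^{[T/\Delta_n]-k_n+1}\partial g(\widetilde c_i)\sum_{j=0}^{k_n-1}\Delta_{i+j}X\,\partial_\theta\psi_{i+j}(\theta_0)\mathbf{1}_{\{|\Delta_{i+j}X|\le w_n\}}$$
satisfies $\esp|S_n|\le C\sqrt{n k_n}$, since then $\esp|I|\le\frac{2K}{k_n n^{1/2}}\esp|S_n|\le 2KC/\sqrt{k_n}\to0$ because $k_n\to\infty$ (a consequence of $k_n^3\Delta_n\to\infty$). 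Note that the naive estimate $\sum_{i,j}\esp[|\partial g(\widetilde c_i)|\,|\Delta_{i+j}X|]\le C[T/\Delta_n]k_n n^{-1/2}\asymp k_n n^{1/2}$ only yields $\esp|I|=O(1)$, so genuine cancellation is required.

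The core step is an $\mathbb{L}_2$ bound, uniform in $i$, on the inner sum $R_i:=\sum_{j=0}^{k_n-1}\Delta_{i+j}X\,\partial_\theta\psi_{i+j}(\theta_0)\mathbf{1}_{\{|\Delta_{i+j}X|\le w_n\}}$, namely $\|R_i\|_2\le C\sqrt{k_n/n}$. To obtain it I would split $\Delta_{i+j}X=\Delta_{i+j}X^c+\Delta_{i+j}\breve B$. Since $W$ is an $\calh$-Brownian motion, $\esp[\Delta_{i+j}X^c\mid\calh_{t_{i+j-1}}]=0$, while $\partial_\theta\psi_{i+j}(\theta_0)$ is $\calh_{t_{i+j-1}}$-measurable and bounded (as $\phi\in C^3$ and $\max_i|Q_{t_i}|$ is bounded under $\mathbf{(H)}$); hence the continuous-martingale part is a sum of orthogonal $\calh$-martingale differences, whose squared $\mathbb{L}_2$ norm equals $\sum_j\esp[(\Delta_{i+j}X^c)^2(\partial_\theta\psi_{i+j}(\theta_0))^2]\le Ck_n\Delta_n=Ck_n/n$. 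The drift part is dominated pathwise by $Ck_n/n$, hence contributes $O((k_n/n)^2)=o(k_n/n)$ to the squared norm since $k_n/n\le1$, and the truncation correction $\sum_j\Delta_{i+j}X\,\partial_\theta\psi_{i+j}(\theta_0)\mathbf{1}_{\{|\Delta_{i+j}X|>w_n\}}$ is negligible by the same super-polynomial large-deviation bound used for (\ref{devIndicator}), since in the continuous case $w_n\asymp n^{-\bar\omega}$ with $\bar\omega<1/2$ dominates the typical increment size $n^{-1/2}$.

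With this in hand, the outer sum follows from Cauchy-Schwarz together with the $\mathbb{L}_q$ bound $\esp|\partial g(\widetilde c_i)|^q\le C$ (established as in the proof of Lemma \ref{lemmaBoundc}): $\esp|S_n|\le\sum_i\|\partial g(\widetilde c_i)\|_2\,\|R_i\|_2\le C[T/\Delta_n]\sqrt{k_n/n}\le C\sqrt{n k_n}$, which closes the argument. The main obstacle is exactly this borderline accounting: because the prefactor cancels the $n^{1/2}$ precisely, there is no slack, and the entire gain $1/\sqrt{k_n}$ must come from replacing the $\mathbb{L}_1$ size $k_n n^{-1/2}$ of $R_i$ by its $\mathbb{L}_2$ size $\sqrt{k_n}\,n^{-1/2}$ via the $\calh$-martingale orthogonality; the weight $\partial g(\widetilde c_i)$, being only $\mathbb{L}_q$-bounded and not predictable, cannot itself supply any cancellation.
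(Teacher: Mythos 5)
Your proposal is correct, but it takes a genuinely different route from the paper's. The paper first strips the indicator (via the domination $\mathbf{1}_{\{|\Delta_{i+j}X| > w_n\}} \leq w_n^{-1}|\Delta_{i+j}X|$, Burkh\"{o}lder-Davis-Gundy and H\"{o}lder), then \emph{replaces the non-adapted weight} $\partial g(\widetilde{c}_{i})$ by $\partial g(c_{t_i})$ at the cost of the spot-volatility deviation estimate $\esp|\widetilde{c}_i - c_{t_i}|^2 \leq C(k_n^{-1}+k_n\Delta_n)$ from (4.11) in \cite{jacod2013quarticity}, and only then, having a predictable weight, rearranges the remaining sum $B_n$ into martingale increments $\chi_i$ with respect to $\calh$ and concludes by the Lenglart-type property (2.2.35) in \cite{jacod2011discretization}; the martingale cancellation is thus exploited across the \emph{whole} double sum. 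You never touch the weight: you use the $\calh$-martingale orthogonality only \emph{inside each block}, getting $\|R_i\|_{\mathbb{L}_2} \leq C\sqrt{k_n/n}$ (correctly noting that $\partial_\theta\psi_{i+j}(\theta_0)$ is $\calh_0$-measurable and bounded, that $W$ is an $\calh$-Brownian motion by the paper's standing footnote, that the drift block is pathwise $O(k_n/n)$, and that the truncation correction is super-polynomially small since the lemma lives after Lemma \ref{removeJumpG}, so $X$ is continuous), and you then decouple the non-adapted $\partial g(\widetilde{c}_i)$ by a term-wise Cauchy-Schwarz, which is valid regardless of the dependence between the weight and $R_i$. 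This buys simplicity: no approximation of $\widetilde{c}_i$ by $c_{t_i}$, no appeal to (4.11) or to (2.2.35), and your bound $\esp|S_n| \leq C\sqrt{nk_n}$ beats the required $o(k_n n^{1/2})$ by the margin $k_n^{-1/2} \to 0$ (legitimately, as $k_n^3\Delta_n \to \infty$ forces $k_n \to \infty$), exactly the gain you attribute to trading the $\mathbb{L}_1$ size of $R_i$ for its $\mathbb{L}_2$ size. What the paper's longer route buys is the standard Jacod-Rosenbaum organization in which the weight becomes predictable, a pattern it can reuse verbatim in neighboring lemmas; your block-wise argument is more self-contained here, and your opening observation that the naive absolute-value bound only yields $\esp|I| = O(1)$ correctly pinpoints why some martingale cancellation is indispensable.
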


\begin{proof}
Since Assumption \textbf{(H)} yields $\frac{2n^{1/2}}{k_n }(\widehat{\theta} - \theta_0) = O_\proba(k_n^{-1}n^{-1/2})$, it suffices to prove that 
\bea 
\sum_{i=1}^{[T/\Delta_n]-k_n+1} \partial g(\widetilde{c}_{i})\sum_{j=0}^{k_n-1} \Delta_{i+j} X' \partial_\theta \psi_{i+j}(\theta_0)1_{\{|\Delta_{i+j} X | \leq w_n\}} = o_\proba(k_n n^{1/2}). 
\label{convTermDeriv}
\eea 
Recalling the decomposition $\Delta_{i+j}X' = \Delta_{i+j}\breve{B} +\Delta_{i+j}M^c$, 
we first show that the above term is negligible when $\Delta_{i+j}X'$ is replaced by $\Delta_{i+j}M^c$. In that case, by virtue of the domination $1_{\{|\Delta_{i+j} M^c | \geq w_n\}} \leq w_n^{-1}|\Delta_{i+j} M^c|$, Burkh\"{o}lder-Davis-Gundy inequality, H\"{o}lder's inequality, along with the fact that $|\partial g(\widetilde{c}_{i})|$ is $\mathbb{L}_q$ bounded by Lemma \ref{lemmaBoundc}, the indicator function can be removed without loss of generality. Thus, introducing 
$$A_n = \sum_{i=1}^{[T/\Delta_n]-k_n+1} \partial g(\widetilde{c}_{i})\sum_{j=0}^{k_n-1} \Delta_{i+j} M^c \partial_\theta \psi_{i+j}(\theta_0), $$
and
$$B_n = \sum_{i=1}^{[T/\Delta_n]-k_n+1} \partial g(c_{t_i})\sum_{j=0}^{k_n-1} \Delta_{i+j} M^c \partial_\theta \psi_{i+j}(\theta_0), $$
we show that $A_n - B_n = o_\proba(k_n n^{1/2})$ and $B_n = o_\proba(k_n n^{1/2})$ separately. We have for some $\xi_i \in [\widetilde{c}_i,c_{t_i}]$,
\beas |A_n - B_n| &\leq&   \sum_{i=1}^{[T/\Delta_n]-k_n+1} \l|\partial^2 g(\xi_i)\r| \l| \widetilde{c}_{i}- c_{t_i}\r|\sum_{j=0}^{k_n-1} |\Delta_{i+j} M^c| |\partial_\theta \psi_{i+j}(\theta_0)|.
\eeas 
Moreover, by (4.11) in \cite{jacod2013quarticity} (p. 1476), we have the estimate
\bea 
\esp\l[\l|\widetilde{c}_i - c_{t_i} \r|^2\r] \leq C\l(k_n^{-1} + k_n\Delta_n\r).
\label{eqDevVol}
\eea 
Thus, by application of H\"{o}lder's inequality, the fact that $\partial^2 g(\xi_i)$ is $\mathbb{L}_q$ bounded by Lemma \ref{lemmaBoundc}, and that for any $q\geq1$:
\beas 
\esp \l[|\Delta_{i+j} M^c|^q |\partial_\theta \psi_{i+j}(\theta_0)|^q\r] &\leq&   C \esp[|\Delta_{i+j} M^c|^q]\\
&\leq& C n^{-q/2},
\eeas 
we deduce that 
\beas 
\esp|A_n - B_n| \leq Ck_nn^{1/2}\l(k_n^{-1} + k_n\Delta_n\r)^{1/2} = o_\proba(k_nn^{1/2}).
\eeas 
As for $B_n$, we note that it can be expressed as a sum of martingale increments with respect to the filtration $\calh_t = \calf_t \vee \sigma\{Q_{t_i}, i =0,\cdots,n\}$, and we have $B_n = \sum_{i=1}^{[T/\Delta_n]} \chi_i$ with 
$$\chi_i =  \sum_{l= (i-k_n+1) \wedge 1}^i \partial g(\sigma_{t_l}^2) \partial_\theta\psi_{i}(\theta_0) \Delta_{i} M^c.$$ 
Thus, by property (2.2.35) p. 56 in \cite{jacod2011discretization}, proving that $B_n = o_\proba(k_nn^{1/2})$ boils down to showing that 
\bea 
\widetilde{B}_n := n^{-1 }k_n^{-2}\sum_{i=1}^{[T/\Delta_n] } \esp\chi_i^2 \to  0.
\label{eqMartBtilde}
\eea 
Now, using the boundedness of $c$, we have 
\beas
\esp \chi_i^2 &\leq& Ck_n^2 \esp \partial_\theta\psi_{i}(\theta_0)^2 \l(\Delta_{i} M^c\r)^2 \\
&\leq& C k_n^2 n^{-1}. 
\eeas 
Therefore $\widetilde{B}_n = O_\proba(n^{-1})$ which proves (\ref{eqMartBtilde}) and thus (\ref{convTermDeriv}) when replacing $\Delta_{i+j}X'$ by $\Delta_{i+j}M^c$. %We now turn to the case where we consider $\Delta_{i+j}J$ in lieu of $\Delta_{i+j}X$. We have 
%Moreover, adapting the reasoning of Lemma \ref{lemmaReplaceIndicator} we can replace the indicator function by  $1_{\{|\Delta_{i+j} X | \leq w_n\}}$ without loss of generality. Therefore, using the local boundedness of the information process, we deduce that 
%\beas
%\l|\sum_{i=1}^{[T/\Delta_n]-k_n+1} \partial g(\widetilde{c}_{i})\sum_{j=0}^{k_n-1} \Delta_{i+j} J \partial_\theta \psi_{i+j}(\theta_0)1_{\{|\Delta_{i+j} J | \leq w_n\}} \r| &\leq& C \sum_{k=1}^{[T/\Delta_n] } |\Delta_k J|1_{\{|\Delta_{k} J | \leq w_n\}}\sum_{l= (k-k_n+1) \wedge 1}^k |\partial g(\widetilde{c}_{l})| 
%\eeas
%and by Cauchy-Schwarz inequality, the above expression is in turn dominated by 
%\beas 
% &&C \l(\sum_{k=1}^{[T/\Delta_n] } |\Delta_k J|^21_{\{|\Delta_{k} J | \leq w_n\}} \sum_{k=1}^{[T/\Delta_n] }  \l(\sum_{l= (k-k_n+1) \wedge 1}^k |\partial g(\widetilde{c}_{l})|\r)^2 \r)^{1/2} \\&\leq& C w_n^{(1-r)/2}\underbrace{\l(\sum_{k=1}^{[T/\Delta_n]}|\Delta_k J|^r\r)^{1/2}}_{O_\proba(1)}\underbrace{\l(\sum_{k=1}^{[T/\Delta_n] }  \l(\sum_{l= (k-k_n+1) \wedge 1}^k |\partial g(\widetilde{c}_{l})|\r)^2 \r)^{1/2} }_{O_\proba(k_n n^{1/2})}\\
% &=& O_\proba(w_n^{(1-r)/2} k_n n^{1/2}) = o_\proba(k_n n^{1/2}),
%\eeas 
%and we are done. 
Finally, the case where we consider the drift term $\Delta_{i+j} \breve{B}$ in lieu of $\Delta_{i+j} X'$ follows immediately from the fact that $\esp |\Delta_{i+j} \breve{B}|^k \leq C n^{-k} $ for any $k \geq 1$.
\end{proof}

\begin{lemma*}\label{lemmaTermII}
We have that $II = o_\proba(1)$, $III = o_\proba(1)$, $IV = o_\proba(1)$.
\end{lemma*}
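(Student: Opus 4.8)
The plan is to show that each of $II$, $III$ and $IV$ converges to $0$ in $\mathbb{L}_1$, which immediately yields the claimed $o_\proba(1)$ convergence. Throughout I would exploit four ingredients already available: first, Assumption \textbf{(H)} gives the deterministic bound $(\widehat{\theta}-\theta_0)^2 \leq K^2/n^2$, so this factor can simply be pulled out of every expectation; second, since $\phi$ is $C^3$ in $\theta$ and $\max_i\|Q_{t_i}\|$ is bounded, the weights satisfy $|\partial_\theta\psi_{i+j}(\theta)| \leq C$ and $|\partial_\theta^2\psi_{i+j}(\theta)| \leq C$ uniformly in $\theta$; third, a straightforward adaptation of Lemma \ref{lemmaBoundc} gives $\esp|\partial g(c_i(\theta))|^q \leq C$ and $\esp|\partial^2 g(c_i(\theta))|^q \leq C$ uniformly over $\|\theta-\theta_0\| \leq K/n$, which in particular covers the random intermediate point $\overline{\theta}\in[\theta_0,\widehat{\theta}]$; and fourth, the increment moment bound $\esp|\Delta_{i+j}X|^q \leq Cn^{-q/2}$, whence also $\esp|\Delta_{i+j}X(\overline{\theta})|^q \leq Cn^{-q/2}$. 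The remaining work is then careful bookkeeping of the powers of $n$ and $k_n$, recalling that $\Delta_n = T/n$ and that the outer sum has $O(n)$ terms while each inner sum has $k_n$ terms.

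For $II$ and $III$ this bookkeeping is direct. In $II$, bounding the summand by Cauchy--Schwarz gives $\esp[|\partial g(c_i(\overline{\theta}))|\,|\Delta_{i+j}X(\overline{\theta})|\,|\partial_\theta^2\psi_{i+j}(\overline{\theta})|] \leq Cn^{-1/2}$, and multiplying by the prefactor $n^{1/2}/k_n$, the factor $n^{-2}$ from $(\widehat{\theta}-\theta_0)^2$, and the $O(nk_n)$ pairs $(i,j)$ produces $\esp|II| \leq C/n$. In $III$ the increment factor is absent, so each summand is $O(1)$ and the same count gives $\esp|III| \leq C n^{-1/2}$. Both tend to $0$.

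The term $IV$ is the delicate one, because it carries the explosive prefactor $n^{1/2}/(k_n^2\Delta_n) = n^{3/2}/(Tk_n^2)$ together with the square of the inner sum $S_i := \sum_{j=0}^{k_n-1}\Delta_{i+j}X(\overline{\theta})\,\partial_\theta\psi_{i+j}(\overline{\theta})\,\mathbf{1}_{\{|\Delta_{i+j}X|\leq w_n\}}$. Here I would estimate $\esp S_i^2$ by Cauchy--Schwarz on the inner sum, $S_i^2 \leq k_n\sum_{j=0}^{k_n-1}\Delta_{i+j}X(\overline{\theta})^2\,\partial_\theta\psi_{i+j}(\overline{\theta})^2$, so that $\esp S_i^2 \leq Ck_n^2/n$; combining with the $\mathbb{L}_q$-bounded factor $\partial^2 g(c_i(\overline{\theta}))$, the $n^{-2}$ from $(\widehat{\theta}-\theta_0)^2$, the prefactor $n^{3/2}/(Tk_n^2)$, and the $O(n)$ outer terms yields $\esp|IV| \leq Cn^{-1/2}\to 0$. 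A sharper estimate using the martingale orthogonality of the continuous increments $\Delta_{i+j}X^c$ under $\mathcal{H}_t$ would give $\esp S_i^2 \leq Ck_n/n$, but this refinement is not needed. The main obstacle, and where I would spend the most care, is precisely this balancing of the $n^{3/2}/k_n^2$ prefactor against $\esp S_i^2$, in tandem with securing the uniform $\mathbb{L}_q$ control of $\partial^2 g(c_i(\overline{\theta}))$ over the shrinking neighborhood of $\theta_0$; once these are in place, all three bounds follow routinely.
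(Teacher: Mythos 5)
Your proposal is correct and follows essentially the same route as the paper: for each of $II$, $III$, $IV$ you pull out the almost-sure bound $(\widehat{\theta}-\theta_0)^2 \leq K^2/n^2$ from Assumption \textbf{(H)}, use the uniform bounds $|\partial_\theta\psi_{i+j}(\theta)| \leq C$ and $|\partial_\theta^2\psi_{i+j}(\theta)| \leq C$, invoke the adaptation of Lemma \ref{lemmaBoundc} for the $\mathbb{L}_q$ bounds on $\partial g(c_i(\overline{\theta}))$ and $\partial^2 g(c_i(\overline{\theta}))$ together with the estimate (\ref{devXtheta}), and for $IV$ apply Cauchy--Schwarz to the inner sum exactly as the paper does, arriving at the same rates $\esp|II| \leq C/n$, $\esp|III| \leq Cn^{-1/2}$, $\esp|IV| \leq Cn^{-1/2}$. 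The paper likewise dispenses with any martingale refinement for $IV$, so your observation that the crude bound $\esp S_i^2 \leq Ck_n^2/n$ suffices matches its argument.
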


\begin{proof}
Proving the first claim is equivalent to showing that 
\beas
\widetilde{II} := \sum_{i=1}^{[T/\Delta_n]-k_n+1}\partial g(c_i'(\overline{\theta}))\sum_{j=0}^{k_n-1}\Delta_{i+j} X'(\overline{\theta}) \partial_\theta^2 \psi_{i+j}(\overline{\theta})1_{\{|\Delta_{i+j} X' | \leq w_n\}}  = o_\proba(k_nn^{3/2}).
\eeas 
Note, again, that by Assumption \textbf{(H)} and the fact that $\overline{\theta}$ belongs to a compact set, we have $|\partial_\theta^2\psi_{i+j}(\overline{\theta})| \leq C$. Thus 
\beas 
\esp\l|\widetilde{II}\r| &\leq& C\sum_{i=1}^{[T/\Delta_n]-k_n+1}\esp\l[|\partial g(c_i'(\overline{\theta}))|\sum_{j=0}^{k_n-1}|\Delta_{i+j} X'(\overline{\theta})| \r]\\
&\leq& C \sum_{i=1}^{[T/\Delta_n]-k_n+1}\sum_{j=0}^{k_n-1}\l(\esp\partial g(c_i'(\overline{\theta}))^2\r)^{1/2}\l(\esp|\Delta_{i+j} X'(\overline{\theta})|^2 \r)^{1/2}\\
&\leq& C k_n n^{1/2} = o_\proba(k_n n^{3/2}),
\eeas 
where we have used Lemma \ref{lemmaBoundc}, and the fact that for any $q \geq 1$,  
\bea 
\esp |\Delta_{i+j}X'(\overline{\theta})|^q \leq C\l(\esp |\Delta_{i+j}X'|^q +\esp\l[ \underbrace{(\overline{\theta}-\theta_0)^q}_{\leq K/n^q} \underbrace{\sup_{\theta \in \Theta}|\partial_\theta \psi_i(\theta)|^q}_{\leq K}\r] \r) \leq C\l(n^{-  q/2} + n^{-q}\r).
\label{devXtheta}
\eea 
For the second claim, we have (bounding the indicator function from above by $1$) the estimate 
\beas 
\widetilde{III} &\leq& \sum_{i=1}^{[T/\Delta_n]-k_n+1}\partial g(c_i'(\overline{\theta}))\sum_{j=0}^{k_n-1}  \partial_\theta \psi_{i+j}(\overline{\theta})^2\\
&\leq& C k_n\underbrace{\sum_{i=1}^{[T/\Delta_n]-k_n+1}|\partial g(c_i'(\overline{\theta}))|}_{O_\proba(n)}\\
&=& O_\proba(k_n n) = o_\proba(k_n n^{3/2}),
\eeas 
so that $III=o_\proba(1)$. Finally we show that $IV = o_\proba(1)$, that is
\bea 
\widetilde{IV} := \sum_{i=1}^{[T/\Delta_n]-k_n+1}\partial^2g(c_i'(\overline{\theta}))\l\{\sum_{j=0}^{k_n-1} \Delta_{i+j} X'(\overline{\theta}) \partial_\theta \psi_{i+j}(\overline{\theta})1_{\{|\Delta_{i+j} X' | \leq w_n\}}\r\}^2 = o_\proba(k_n^2 n^{1/2}).
\label{estIV}
\eea 
By Cauchy-Schwarz inequality and the fact that $|\partial_\theta\psi_{i+j}(\overline{\theta})|^2 \leq C$, we get the domination
\beas 
\esp|\widetilde{IV}| &\leq& Ck_n\esp\l[\sum_{i=1}^{[T/\Delta_n]-k_n+1}|\partial^2g(c_i'(\overline{\theta}))|\sum_{j=0}^{k_n-1} |\Delta_{i+j} X'(\overline{\theta})|^2\r]\\
&\leq&Ck_n \sum_{i=1}^{[T/\Delta_n]-k_n+1}\sum_{j=0}^{k_n-1} \underbrace{\l(\esp\partial^2g(c_i'(\overline{\theta}))^{2} \r)^{1/2}}_{\leq C} \underbrace{\l(\esp|\Delta_{i+j} X'(\overline{\theta})|^{4}\r)^{1/2}}_{O(n^{-1})}\\
&\leq& Ck_n^2 = o(k_n^2n^{1/2}), 
\eeas 
where we have used (\ref{devXtheta}) with $q = 4$, and we are done. 
\end{proof}
Similarly we have by the mean value theorem that
$$\frac{n^{1/2}\Delta_n}{k_n}\sum_{i=1}^{[T/\Delta_n]-k_n+1}\l\{h(\overline{c}_{i}') - h(\widetilde{c}_{i}) \r\}$$ 
is equal to
$$\frac{2n^{1/2}}{k_n^2 }(\widehat{\theta} - \theta_0)\sum_{i=1}^{[T/\Delta_n]-k_n+1} \partial h( c_{i}'(\overline{\theta}))\sum_{j=0}^{k_n-1} \Delta_{i+j} X'(\overline{\theta}) \partial_\theta \psi_{i+j}(\overline{\theta})1_{\{|\Delta_{i+j} X' | \leq w_n\}}.$$

\begin{lemma*}\label{lemmaH}
We have 
\beas 
  \frac{n^{1/2}\Delta_n}{k_n}\sum_{i=1}^{[T/\Delta_n]-k_n+1}\l\{h(\overline{c}_{i}') - h(\widetilde{c}_{i}) \r\} \to^\proba 0.
\eeas 
\end{lemma*}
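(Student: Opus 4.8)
The plan is to handle this display exactly as we handled the analogous term $I$ in the expansion of $E_n$ (Lemma \ref{lemmaTermI}), while exploiting the extra factor $1/k_n$ carried here, which will make a crude bound sufficient and spare us the refined martingale decomposition. By the mean value theorem identity stated immediately above the lemma, the left-hand side equals
$$\frac{2n^{1/2}}{k_n^2}(\widehat{\theta}-\theta_0)\sum_{i=1}^{[T/\Delta_n]-k_n+1}\partial h(c_i(\overline{\theta}))\sum_{j=0}^{k_n-1}\Delta_{i+j}X(\overline{\theta})\,\partial_\theta\psi_{i+j}(\overline{\theta})\,1_{\{|\Delta_{i+j}X|\leq w_n\}},$$
for some $\overline{\theta}\in[\theta_0,\widehat{\theta}]$. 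By Assumption \textbf{(H)} the prefactor is $O_\proba(k_n^{-2}n^{-1/2})$, so it suffices to show that the double sum, call it $S$, satisfies $S=o_\proba(k_n^2 n^{1/2})$.

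First I would collect the ingredients already available: $|\partial_\theta\psi_{i+j}(\overline{\theta})|\leq C$ almost surely by \textbf{(H)} and compactness of $\Theta$; a straightforward adaptation of Lemma \ref{lemmaBoundc} gives $\esp|\partial h(c_i(\overline{\theta}))|^q\leq C$ uniformly in $i$ (using that $h(x)=2\partial^2 g(x)x^2$ inherits polynomial growth from (\ref{momentG}) and that $\overline{\theta}$ lies in the $K/n$-ball of $\theta_0$); and the deviation estimate (\ref{devXtheta}) yields $\esp|\Delta_{i+j}X(\overline{\theta})|^2\leq Cn^{-1}$.

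Then I would bound $S$ crudely. Passing the absolute value inside both sums and applying the Cauchy-Schwarz inequality together with the three bounds above gives
$$\esp|S|\leq C\sum_{i=1}^{[T/\Delta_n]-k_n+1}\sum_{j=0}^{k_n-1}\left(\esp|\partial h(c_i(\overline{\theta}))|^2\right)^{1/2}\left(\esp|\Delta_{i+j}X(\overline{\theta})|^2\right)^{1/2}\leq C\,\frac{T}{\Delta_n}\,k_n\,n^{-1/2}=Ck_nn^{1/2},$$
since $T/\Delta_n=n$. Hence $S=O_\proba(k_nn^{1/2})$, and because $k_n\to\infty$ (a consequence of $k_n^2\Delta_n\to 0$ and $k_n^3\Delta_n\to\infty$) this is indeed $o_\proba(k_n^2n^{1/2})$. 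Multiplying by the prefactor $O_\proba(k_n^{-2}n^{-1/2})$ leaves $O_\proba(k_n^{-1})=o_\proba(1)$, which is the claim.

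This analysis is genuinely easier than for term $I$ of $E_n$: there the prefactor was only $O_\proba(k_n^{-1}n^{-1/2})$, so the same crude bound produced $O_\proba(k_nn^{1/2})\cdot O_\proba(k_n^{-1}n^{-1/2})=O_\proba(1)$, forcing the martingale split into $A_n,B_n$ and the variance gain from (\ref{eqDevVol}). The extra $1/k_n$ appearing in the bias-correction term $h$ of the functional estimator absorbs exactly that difficulty, so no martingale argument is needed here. The only place I expect to need a little care is verifying that the adaptation of Lemma \ref{lemmaBoundc} really does hold with $c_i(\overline{\theta})$ in place of $\overline{c}_i=c_i(\widehat{\theta})$; but since $\overline{\theta}$ satisfies the same $\|\overline{\theta}-\theta_0\|\leq K/n$ bound, the argument transfers verbatim.
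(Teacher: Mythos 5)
Your proof is correct and takes essentially the same route as the paper's: starting from the mean value identity displayed before the lemma, you use the almost sure bounds $\|\widehat{\theta}-\theta_0\| \leq K/n$ and $|\partial_\theta \psi_{i+j}(\overline{\theta})| \leq C$ from Assumption \textbf{(H)}, the $\mathbb{L}_q$ boundedness of $\partial h(c_i(\overline{\theta}))$ deduced as in Lemma \ref{lemmaBoundc} from the polynomial growth of $\partial h$, and Cauchy--Schwarz with (\ref{devXtheta}) to arrive at the paper's bound of order $C/k_n \to 0$. Your observation that the extra factor $1/k_n$ in the bias-correction term makes a crude bound suffice, with no martingale decomposition as in Lemma \ref{lemmaTermI}, is exactly what the paper exploits.
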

\begin{proof}
By Assumption \textbf{(H)} we have 
\beas 
  \esp\l|\frac{n^{1/2}\Delta_n}{k_n}\sum_{i=1}^{[T/\Delta_n]-k_n+1}\l\{h(\overline{c}_{i}') - h(\widetilde{c}_{i}) \r\}\r| &\leq&  \frac{C}{n^{
  1/2}k_n^2 } \sum_{i=1}^{[T/\Delta_n]-k_n+1} \sum_{j=0}^{k_n-1} \esp \l[|\partial h( c_{i}'(\overline{\theta}))||\Delta_{i+j} X'(\overline{\theta})|\r].
\eeas
Since $\partial h$ is also of polynomial growth, we deduce as for Lemma \ref{lemmaBoundc} that for any $q \geq 1$, $\esp|\partial h(c_i'(\overline{\theta}))|^q \leq C$, and so an application of Cauchy-Schwarz inequality yields
\beas 
\esp\l|\frac{n^{1/2}\Delta_n}{k_n}\sum_{i=1}^{[T/\Delta_n]-k_n+1}\l\{h(\overline{c}_{i}) - h(\widetilde{c}_{i}) \r\}\r| \leq C/k_n \to0.
\eeas 
\end{proof}
We prove now the theorem.

\begin{proof}[Proof of Theorem \ref{theoremG}.] 
Recall that by Lemma \ref{removeJumpG} we only need to prove that $n^{1/2}( \widehat{\Xi}^{'} -\widetilde{\Xi}') \to^\proba0$. We have 
$$ n^{1/2}\l(\widehat{\Xi}' - \widetilde{\Xi}'\r) = n^{1/2} \l(\widehat{\Xi}' - \overline{\Xi}'\r) + n^{1/2} \l(\overline{\Xi}' - \widetilde{\Xi}' \r). $$
The first term above is negligible by virtue of Lemma \ref{lemmaReplaceIndicator}. Moreover, since
\beas 
n^{1/2}\l(\overline{\Xi}' - \widetilde{\Xi}'\r) &=& n^{1/2}\Delta_n \sum_{i=1}^{[T/\Delta_n]-k_n+1}\l\{g(\overline{c}_{i}') - g(\widetilde{c}_{i}) \r\} \\
&+& \frac{n^{1/2}\Delta_n}{2k_n}\sum_{i=1}^{[T/\Delta_n]-k_n+1} \l\{ h(\widetilde{c}_{i})- h(\overline{c}_{i}')\r\},
\eeas
the assertion $n^{1/2}\l(\overline{\Xi}' - \widetilde{\Xi}'\r) \to^\proba 0$ is an immediate consequence of Lemma \ref{lemmaTermI}, Lemma \ref{lemmaTermII} and Lemma \ref{lemmaH}. Combined with Theorem 3.2  (p. 1469, applied to $X^{'}$) in \cite{jacod2013quarticity}, this yields the central limit theorem.
\end{proof}

\subsection{Proof of Corollary \ref{corolStudentG}} 
By Slutsky's Lemma, all we need to prove is that $\widehat{AVAR} \to^\proba AVAR$. Given the form of $\widehat{AVAR}$, this can be shown using exactly the same line of reasoning as for the general theorem replacing $g$ by $\overline{h}$ in all our estimates and combining the results with Corollary 3.7 in \cite{jacod2013quarticity} in lieu of Theorem 3.2, except that there is no scaling by $n^{1/2}$ in front of the estimates and no bias term. Since the $C^3$ property of $g$ is only used once when handling the bias term in Lemma  \ref{lemmaH}, the fact that $\overline{h}$ is only of class $C^2$ is not problematic. 

\subsection{Proof of Theorem \ref{thmVolofVol} and Corollary \ref{corVolofVol}}

In \cite{vetter2015estimation}, the author introduces 
\beas 
A_i = \frac{2n}{k_n}\sum_{j=1}^{k_n} \int_{(i+j-1)T/n}^{(i+j)T/n}(X_s-X_{(i+j-1)T/n})dX_s
\eeas 
and 
\beas
B_i := \frac{n}{k_n}\int_{iT/n}^{(i+k_n)T/n}\sigma_s^2ds.
\eeas 
Accordingly, we define
\beas 
\widehat{A}_i &:=& \frac{2n}{k_n}\sum_{j=1}^{k_n}\l\{\int_{(i+j-1)T/n}^{(i+j)T/n}(X_s-X_{(i+j-1)T/n})dX_s +  \psi_{i+j}(\widehat{\theta})\Delta_{i+j}X    \r\},\\
 \widehat{B}_i &:=& \frac{n}{k_n}\l\{\int_{iT/n}^{(i+k_n)T/n}\sigma_s^2ds+ \sum_{j=1}^{k_n}\psi_{i+j}(\widehat{\theta})^2\r\} ,
\eeas 
along with the approximated increments for some arbitrary $p \geq 1$ and  $1 \leq l \leq J(p) := [[nt/T - 2k_n]/((p+2)k_n)]$, where $[x]$ is defined as the floor function of $x$,
\beas 
\widetilde{A}_{i+k_n} - \widetilde{A}_{i} := \frac{n}{k_n}\sigma_{a_l(p)T/n}\sum_{j=1}^{k_n} \l(\Delta_{i+k_n+j}W^2 - \Delta_{i+j}W^2\r), 
\eeas 
and
\beas 
\widetilde{B}_{i+k_n} - \widetilde{B}_{i} := \frac{n}{k_n}\int_{iT/n}^{(i+k_n)T/n} \widetilde{\sigma}_{a_l(p)T/n}(W_{(s+k_nT/n)}^{'} - W_{ s }^{'})ds,
\eeas 
where $a_l(p) := (l-1)(p+2)k_n$. Note that $\widehat{c}_{i} = \widehat{A}_i + \widehat{B}_i$, and therefore $\widehat{\Xi}$ can be linked to the above quantities as follows:
\bea \label{volOfVolKeyEquation}
\widehat{\Xi} = \sum_{i=0}^{[T/\Delta_n]-2k_n} \l\{\frac{3}{2k_n}(\widehat{A}_{i+k_n} - \widehat{A}_i + \widehat{B}_{i+k_n} - \widehat{B}_i)^2 - \frac{6}{k_n^2}\widehat{q}_i\r\}. 
\eea 
Remark also that the approximated increments are independent of the information process and of $\widehat{\theta}$. Now note that the general proof in \cite{vetter2015estimation} is conducted in the following two steps.

\begin{itemize}
    \item Compute an estimate for the deviations $ A_{i+k_n} - A_i - (\widetilde{A}_{i+k_n} - \widetilde{A}_{i} )$, $ B_{i+k_n} - B_i - (\widetilde{B}_{i+k_n} - \widetilde{B}_{i} )$, and $\widetilde{q}_i - \int_{t_i}^{t_{i+1}}\sigma_{s}^4ds$.
    \item Systematically use the previous estimate to replace $A_i$ (resp. $B_i$, $\widetilde{q}_i$) by its counterpart $\widetilde{A}_i$ (resp. $\widetilde{B}_i$, $\int_{t_i}^{t_{i+1}}\sigma_{s}^4ds$) in all the encountered expressions.
\end{itemize}
Since $\widetilde{A}_i$, $\widetilde{B}_i$ and $\int_{t_i}^{t_{i+1}}\sigma_{s}^4ds$ are independent of the information process and $\widehat{\theta}$, the second step holds in our setting as well with no modification in the proofs of \cite{vetter2015estimation}. Thus, all we have to do in order to prove the theorem is to adapt the first step replacing $A_i$, $B_i$ and $\widetilde{q}_i$ by $\widehat{A}_i$, $\widehat{B}_i$ and $\widehat{q}_i$. More precisely, we adapt Lemma A.1 and the second equation in the proof of (A.8) p. 2411 (corresponding to the approximation of $\widetilde{q}_i$ by $\int_{t_i}^{t_{i+1}}\sigma_{s}^4ds$) in \cite{vetter2015estimation} as follows (in the next lemma, recall that $\widetilde{A}_i$ and $\widetilde{B}_i$ depend on some parameter $p \geq 1$). 
\begin{lemma*}\label{lemmaVolofVol}
We have for any $r\geq1$, $p\geq1$, and any $i \in \{a_l(p), \cdots, a_l(p) + p k_n \}$ 
\beas 
\esp\l[\l|\widehat{A}_{i+k_n} - \widehat{A}_i-(\widetilde{A}_{i+k_n} - \widetilde{A}_{i} )\r|^r\r] \leq C(pn^{-1})^{r/2},
\eeas 
\beas
\esp\l[\l|\widehat{B}_{i+k_n} - \widehat{B}_i-(\widetilde{B}_{i+k_n} - \widetilde{B}_{i} )\r|^r\r] \leq C(pn^{-1})^{r/2},
\eeas 
\beas 
\esp\l[\l|\widehat{A}_{i+k_n} - \widehat{A}_i\r|^r\r] \leq Cn^{-r/2},
\eeas 
and
\beas 
\esp\l[\l|\widehat{B}_{i+k_n} - \widehat{B}_i\r|^r\r] \leq Cn^{-r/2}.
\eeas 
Moreover we have uniformly in $t \in [0,T]$
\beas 
\sqrt{\frac{n}{k_n}} \esp \l|\sum_{i=1}^{{[t/\Delta_n]-2k_n}}\frac{6}{k_n^2} \widehat{q}_i - \frac{6}{k_n^2}\int_0^t \sigma_s^4 ds  \r| = o(1). 
\eeas 
\end{lemma*}

\begin{proof}
By Lemma A.1 in \cite{vetter2015estimation}, it suffices to prove that we have 
\beas 
\esp\l[\l|\widehat{A}_{i+k_n} - \widehat{A}_i-( A _{i+k_n} -  A _{i} )\r|^r\r] \leq C(pn^{-1})^{r/2},
\eeas 
and a similar statement for $\widehat{B}_i$. Since $|\psi_{k}(\widehat{\theta})| \leq K/n $ for all $1\leq k \leq n$, we obtain
\beas 
\esp\l[\l|\widehat{A}_{i+k_n} - \widehat{A}_i-( A _{i+k_n} -  A _{i} )\r|^r\r] &\leq& \frac{2^rn^r}{k_n^r}\esp\l[\l| \sum_{j=1}^{k_n}\l\{\psi_{i+k_n+j}(\widehat{\theta})\Delta_{i+k_n+j}X -  \psi_{i +j}(\widehat{\theta})\Delta_{i +j}X\r\}\r|^r\r],\\
&\leq & \frac{ Cn^r}{k_n} \sum_{j=1}^{k_n} \esp  \l[\l|  \psi_{i+k_n+j}(\widehat{\theta})\Delta_{i+k_n+j}X\r|^r +  \l|\psi_{i +j}(\widehat{\theta})\Delta_{i +j}X \r|^r\r],\\
&\leq& \frac{ C }{k_n} \sum_{j=1}^{k_n} \underbrace{\esp \l[\l|   \Delta_{i+k_n+j}X\r|^r +  \l| \Delta_{i +j}X \r|^r\r]}_{\leq C n^{-r/2}},\\
&\leq& Cp^{r/2}n^{-r/2},
\eeas
since $p \geq 1$, where we used Jensen's inequality at the second step and the domination $|\psi_i(\widehat{\theta})|^r \leq C/n^r$ at the third step. Proving the other three inequalities and the approximation for $\widehat{q}_i$ can be done by similar calculation.
\end{proof}
Now, to prove Theorem \ref{thmVolofVol}, it is sufficient to follow closely the proof of Theorem 2.6 in \cite{vetter2015estimation} replacing all occurrences of $A_i$, $B_i$ and $\sum_{i=1}^{{[t/\Delta_n]-2k_n}}\frac{6}{k_n^2} \widehat{q}_i$ by $\widehat{A}_i$, $\widehat{B}_i$ and $\frac{6}{k_n^2}\int_0^t \sigma_s^4 ds $, and accordingly all applications of Lemma A.1 and the approximation for $\widehat{q}_i$ by Lemma \ref{lemmaVolofVol} above. 

\smallskip
A similar line of reasoning yields Corollary \ref{corVolofVol}.

\bibliography{biblio}
% BibTeX users please use one of
\bibliographystyle{spbasic}      % basic style, author-year citations

\end{document}